\newcommand{\Rspace}        {\mathbb {R}}
\newcommand{\Zring}        {\mathbb {Z}}
\newcommand{\inter}        {\mathcal{X}}
\newtheorem{lemma}{Lemma}
\newtheorem{Theorem}{Theorem}
\newtheorem{theorem}{Theorem}
\newtheorem{proposition}{Proposition}
\newtheorem{corollary}{Corollary}
\newtheorem{Conjecture}{Conjecture}
\begin{document}
\title{On the Smith classes, the van Kampen obstruction and embeddability of $[3]*K$\thanks{This research was supported by a grant from IPM.}}

\author{Salman Parsa\thanks{ Current Address: Computer Science Department, Saint Louis University, Saint Louis, Missouri, USA.}}


\maketitle

\begin{abstract}
    In this survey-research paper, we first introduce the theory of Smith classes of complexes with fixed-point free, periodic maps on them. These classes, when defined for the deleted product of a simplicial complex $K$, are the same as the embedding classes of $K$. Embedding classes, in turn, are generalizations of the van Kampen obstruction class for embeddability of a $d$-dimensional complex $K$ into the Euclidean $2d$-space. All of these concepts will be introduced in simple terms. 
    
    Second, we use the theory
    introduced in the first part to relate the embedding classes (or the special Smith classes) of the the complex $[3]*K$ with the embedding classes of $K$. Here $[3]*K$ is the join of $K$ with a set of three points.
    
    Specifically, we prove that if the $m$-th embedding class of $K$ is non-zero, then the $(m+2)$-nd embedding class of $[3]*K$ is non-zero.
    We also prove some of the consequences of this theorem for the embeddability of $[3]*K$.

\end{abstract}

\section{Introduction}
Let $K$ be a $d$-dimensional simplicial complex. To $K$ is assigned an underlying topological space that we also denote by $K$. We say that $K$ \textit{embeds} into $\Rspace^n$ if there exists a one-to-one, piece-wise linear (PL) continuous function (map) $f:K \rightarrow \Rspace^n$. In other words, in this paper we restrict ourselves to the PL maps other than when explicitly stated otherwise. The following result is a special case of a well-known theorem (see Theorem \ref{t:grunbaum} below) proved by Gr\"unbaum \cite{Gru69}, continuing the work of Flores \cite{Flo33,Flo32}, van Kampen \cite{vKam33} and Rosen \cite{Ros60}. 

\begin{theorem}\label{t:triplelink}
Let $K$ be a graph that is not planar. Then, the join of $K$ with three vertices, $[3]*K$, does not embed into Euclidean 4-space $\Rspace^4$.  
\end{theorem}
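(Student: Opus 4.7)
The plan is to obtain Theorem~\ref{t:triplelink} by specializing Gr\"unbaum's join theorem (Theorem~\ref{t:grunbaum} stated below), which the paper flags as the general source of the statement. Once Gr\"unbaum's theorem is available, the proof amounts to checking the numerical hypotheses for the two join factors $[3]$ and $K$.

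First I would record the non-embedding dimensions of the factors. The three-point set $[3]$ does not embed in the singleton $\Rspace^0$, since three distinct points cannot be mapped injectively into a single point. The graph $K$ does not embed in the plane $\Rspace^2$, which is exactly the non-planarity hypothesis. Second, I would invoke Gr\"unbaum's theorem in the standard form that if $A \not\hookrightarrow \Rspace^a$ and $B \not\hookrightarrow \Rspace^b$, then $A*B \not\hookrightarrow \Rspace^{a+b+2}$; applied with $(A,a)=([3],0)$ and $(B,b)=(K,2)$, this yields $[3]*K \not\hookrightarrow \Rspace^{0+2+2}=\Rspace^{4}$, which is the conclusion.

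The real work is thus in Gr\"unbaum's theorem itself, not in this deduction. An alternative route, more aligned with the obstruction-theoretic machinery announced in the abstract, would be to first observe that non-planarity of $K$ forces the van Kampen-style embedding class of $K$ that obstructs $\Rspace^2$-embeddability to be non-zero; for $1$-complexes this completeness of the mod-$2$ van Kampen obstruction is the content of the Hanani--Tutte theorem. One then applies the paper's main theorem to promote this non-vanishing class of $K$ to a non-vanishing embedding class of $[3]*K$ two indices higher, which in turn obstructs embedding into $\Rspace^4$. In either approach the only genuinely hard step is the join-to-join increment of embedding classes; the passage from that to Theorem~\ref{t:triplelink} is just bookkeeping on dimensions.
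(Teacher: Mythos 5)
Your first route has a genuine gap: the ``standard form'' of Gr\"unbaum's theorem you invoke --- that $A\not\hookrightarrow\Rspace^a$ and $B\not\hookrightarrow\Rspace^b$ imply $A*B\not\hookrightarrow\Rspace^{a+b+2}$ --- is not a theorem. Theorem~\ref{t:grunbaum} is proved only for joins of skeleta of simplices, and the general join principle is false: by the corollary following Theorem~\ref{t:main}, a $2$-complex $L$ with $\vartheta(L)=0$ that nevertheless does not embed into $\Rspace^4$ (such complexes exist by Freedman--Krushkal--Teichner \cite{Freetal94}) has $[3]*L$ embeddable into $\Rspace^6=\Rspace^{0+4+2}$, even though $[3]\not\hookrightarrow\Rspace^0$ and $L\not\hookrightarrow\Rspace^4$. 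The obstruction-class analogue of the principle you use is stated in this paper only as the \emph{Algebraic Gr\"unbaum--van Kampen--Flores Conjecture}. To actually derive Theorem~\ref{t:triplelink} from Theorem~\ref{t:grunbaum} you must first reduce an arbitrary non-planar graph to the Kuratowski graphs: $K$ contains a subdivision of $K_5$ or of $K_{3,3}$; subdivision and passage to a subcomplex preserve non-embeddability of the join; $K_5$ is the $1$-skeleton of the $4$-simplex, so $[3]*K_5$ is the case $m=2$, $(d_1,d_2)=(0,1)$ of Theorem~\ref{t:grunbaum}; and $[3]*K_{3,3}=[3]*[3]*[3]$ is the Flores complex. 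This Kuratowski reduction is exactly what is left implicit when the paper calls Theorem~\ref{t:triplelink} a special case of Theorem~\ref{t:grunbaum}, and it is the step your first argument is missing.

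Your alternative route is sound and is in fact the strategy the paper itself generalizes: non-planarity gives $\vartheta^2(K)\neq 0$ by completeness of the van Kampen obstruction for graphs (\cite{Sar91}, equivalently Hanani--Tutte for the mod~$2$ class); Theorem~\ref{t:main} (whose proof is independent of Theorem~\ref{t:triplelink}) then gives $\vartheta^4([3]*K)\neq 0$; and a non-zero embedding class obstructs PL embedding into $\Rspace^4$. If you write this up, lead with that argument or with the Kuratowski reduction, not with the unrestricted join principle.
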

From a different viewpoint, Theorem \ref{t:triplelink} says that, the intersection of links of three vertices in a 2-complex that is embeddable into $4$-space, is a planar graph. 

The proof of Theorem \ref{t:triplelink}, as given in \cite{Flo33,Ros60,Gru69}, is geometric and uses the Borsuk-Ulam theorem. In this paper, we generalize Theorem \ref{t:triplelink} to all dimensions ($d>2$) by algebraic methods using the definition of obstruction classes to existence of an embedding as special Smith classes. The method we exploit is more systematic than geometric methods and we obtain indeed stronger results. A major reason for writing this paper has been to advertise this systematic approach and the theory of Smith classes as an alternative to the Borsuk-Ulam theoerm. It is concievable that an algebraic method has a wider application.

We note that if in Theorem \ref{t:triplelink} instead of the term ``planar" we use the expression ``linklessly embeddable into 3-space", then, the theorem generalizes verbatim to all dimensions. That is, the intersection of three links of vertices in an embeddable complex into $\Rspace^{2d}$ has to be linklessly embeddable into $\Rspace^{2d-1}$. See \cite{Par18} for a proof and definition of ``linklessly embeddable".

\paragraph{}
Let $\Delta(K)$ denote the deleted product of $K$. The main result of this paper is the following.

\begin{theorem}\label{t:smithclass}
Let $K$ be a simplicial $d$-complex and let $A^m=A^{m}(\Delta(K),t)$, $m\geq 0$ even\footnote{For the purposes of this servey-research article we limit ourselves to this case.}, denote the $m$-th special Smith class of the deleted product of $K$ (= $m$-th embedding obstruction of $K$), where $t$ is the involution that exchanges the factors. If $A^m \neq 0$ Then $A^{m+2}(\Delta([3]*K))\neq 0$.      
\end{theorem}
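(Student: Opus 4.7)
The plan is to leverage the join structure of $[3]*K$ to transfer the given non-zero special Smith class of $\Delta(K)$ to a non-zero one of $\Delta([3]*K)$, with a degree shift of $2$ coming from the three extra vertices.

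First, I would stratify the deleted product $\Delta([3]*K)$ by the join coordinates of its two factors. Every point of $[3]*K$ has the form $(1-s)\,v + s\,x$ with $v$ one of the three extra vertices, $x \in K$, and $s \in [0,1]$; pairs of such points that are distinct constitute $\Delta([3]*K)$. Collecting the resulting pieces gives an equivariant description of $\Delta([3]*K)$ that, up to equivariant homotopy (and an equivariant deformation retraction), can be built as a join-type construction from $\Delta(K)$ and $\Delta([3])$ with $t$ acting diagonally. The space $\Delta([3])$ is six discrete points on which $t$ acts freely by coordinate swap, so its quotient is three points, and its Smith-theoretic structure is very explicit.

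Second, I would compute the Smith classes of $\Delta([3])$ directly from this low-dimensional model, pinpointing a basic non-zero special class whose degree, together with the dimension shift of the join operation, accounts for the total increase of $2$. Since the complex is a finite set of points, one writes out cochains by hand and verifies the specialness condition (i.e.\ the behaviour under $t^*$) directly.

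Third, I would invoke a K\"unneth/join-type multiplicativity property for special Smith classes: classes of degrees $p$ and $q$ on equivariant spaces $X$ and $Y$ multiply in the equivariant join $X*Y$ to a class of degree $p+q+1$. Applied with $X = \Delta(K)$, $Y = \Delta([3])$, $p = m$, and $q$ the degree of the basic class found above, this yields a non-zero class of total degree $m+2$ on the join model. Pulling this class back to $\Delta([3]*K)$ along the equivariant retraction from the first step then furnishes a non-zero element of $A^{m+2}(\Delta([3]*K), t)$.

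The main obstacle I anticipate lies in the third step: within the restricted framework of special Smith classes, one must verify that the product of two special classes is again special, and that it does not accidentally vanish when pulled back along the equivariant retraction. A secondary difficulty is the careful bookkeeping of the $(+1)$ and $(-1)$ eigenspaces of $t^*$ through the join construction, since the notion of ``special'' is defined precisely in terms of these eigenspaces and sign conventions can shift in subtle ways under joins and products.
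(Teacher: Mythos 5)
Your proposal rests on the claim that $\Delta([3]*K)$ is, up to equivariant homotopy, a join-type construction built from $\Delta(K)$ and $\Delta([3])$. This is the crux, and it fails: the deleted \emph{product} does not commute with joins. (The deleted \emph{join} does, $\widetilde{J}(K*L)\cong \widetilde{J}(K)*\widetilde{J}(L)$, but the classes in Theorem~\ref{t:smithclass} live on the deleted product, and passing to the deleted join loses exactly the information at stake.) Already for $K=L=\mathrm{pt}$ one has $\Delta(\mathrm{pt}*\mathrm{pt})=S^0$ while $\Delta(\mathrm{pt})*\Delta(\mathrm{pt})=\emptyset$. Even granting some join model, your degree count does not close: $\Delta([3])$ is six discrete points with a free involution, so $A^q(\Delta([3]),t)=0$ for all $q\geq 1$; the only available class has $q=0$, and your formula $p+q+1$ then yields $m+1$, not $m+2$. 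The extra $+1$ in the theorem is precisely the phenomenon that a join with $\Delta([3])$ cannot produce --- it is the deleted join of $[3]$ that is an $S^1$ (raising the index by $2$), whereas its deleted product is $S^0$-like. Finally, the ``K\"unneth/join multiplicativity of special Smith classes'' you invoke in step three is not a known black box for integer classes; it is essentially the paper's open Algebraic Gr\"unbaum--van Kampen--Flores Conjecture, so you would be assuming something at least as strong as what is to be proved.

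There is also a second, independent obstacle your plan does not address: for $m$ even the integer class $\vartheta^m_r(K)$ is a torsion class that evaluates to zero on \emph{every} cycle (Lemma~\ref{l:zeroooncycle}), so no argument that detects nonvanishing by pairing a class with a cycle (which is what a K\"unneth-type argument with field coefficients buys you) can work over $\Zring$. The paper's route is instead chain-level and combinatorial: it certifies $\vartheta^m(K)\neq 0$ by a chain $c$ with $\partial c=nz$ and $\phi(c)\not\equiv 0 \pmod n$ (Lemma~\ref{l:generalchain}), and then builds the explicit $(m+2)$-chain $\zeta=vw(c)+wu(c)-vu(c)$ in $\Delta([3]*K)$ from the three cone operations, verifying by direct boundary computations that $\zeta$ is again such a certificate for a resolution cocycle of $[3]*K$. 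If you want to salvage a more conceptual argument, you would need either an equivariant comparison between $\Delta([3]*K)$ and $\widetilde{J}([3])*\Delta(K)\simeq S^1*\Delta(K)$ (which is exactly what is not available in general), or a replacement for the cycle-pairing step that works for torsion classes.
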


Let $\vartheta(K)$ denote the van Kampen obstruction class of $K$. Theorem \ref{t:smithclass} is the main ingredient for our proof of the following.

\begin{theorem}\label{t:main}
Let $K$ be a $d$-dimensional simplicial complex. Then, $\vartheta(K) \neq 0$ if and only if $\vartheta([3]*K)\neq 0.$  
\end{theorem}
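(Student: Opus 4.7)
The forward implication follows immediately from Theorem \ref{t:smithclass}. For a $d$-complex $K$, the van Kampen obstruction $\vartheta(K)$ is by definition the top embedding class $A^{2d}(\Delta(K),t)$, while for the $(d+1)$-complex $[3]*K$ the corresponding obstruction is $A^{2d+2}(\Delta([3]*K),t)$. Applying Theorem \ref{t:smithclass} with $m=2d$ converts the hypothesis $\vartheta(K)\neq 0$ directly into $\vartheta([3]*K)\neq 0$.

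For the converse, the plan is to establish the contrapositive $\vartheta(K)=0 \Rightarrow \vartheta([3]*K)=0$, splitting on the value of $d$. When $d\neq 2$, the classical completeness theorem of van Kampen, Shapiro and Wu guarantees that $\vartheta(K)=0$ forces $K$ to admit a PL embedding $K\hookrightarrow \Rspace^{2d}$. Using that $[3]$ embeds as three affinely independent points in $\Rspace^{1}$, the standard join-of-embeddings construction yields a PL embedding $[3]*K\hookrightarrow \Rspace^{2d+2}$, and PL embeddability immediately annihilates the van Kampen obstruction of $[3]*K$.

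The delicate case, and the one I expect to be the main obstacle, is $d=2$: the obstruction is not complete in middle dimension, so vanishing of $\vartheta(K)$ does not by itself supply an embedding of $K$ into $\Rspace^{4}$. The plan here is to argue entirely on the algebraic side. Vanishing of the primary obstruction $\vartheta(K)$ together with standard equivariant obstruction theory on the $4$-dimensional complex $\Delta(K)$ still produces a $\Zring_2$-equivariant map $\Delta(K)\to S^{3}$. Using the natural decomposition of $\Delta([3]*K)$ into pieces built from $\Delta(K)$, $\Delta([3])$, $K\times K$ and $[3]\times [3]$, combined with a readily-constructed equivariant map $\Delta([3])\to S^{1}$ and the join identity $S^{3}*S^{1}=S^{5}$, one can assemble an equivariant map $\Delta([3]*K)\to S^{5}$, whose existence forces $\vartheta([3]*K)=0$. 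The most delicate step in this assembly will be verifying compatibility of the equivariant maps along the overlaps of the decomposition, where the factor-swap involution interacts nontrivially with the join structure.
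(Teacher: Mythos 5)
Your forward direction and your treatment of the converse when $d\neq 2$ coincide exactly with what the paper does: Theorem~\ref{t:smithclass} with $m=2d$ gives the forward direction, and for $d\neq 2$ the completeness of the obstruction gives an actual PL embedding of $K$, from which $[3]*K\hookrightarrow\Rspace^{2d+2}$ is immediate. So far so good.

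The genuine difficulty is, as you rightly identify, $d=2$, and here your route diverges from the paper's. The paper proves the missing implication $\vartheta^{4}(K)=0 \Rightarrow \vartheta^{6}([3]*K)=0$ by a direct geometric construction in Section~\ref{s:embedding}: one first fixes a PL map $f\colon K\to\Rspace^{4}$ whose intersection \emph{cocycle} is identically zero (possible by finger moves), views $\Rspace^{4}\subset\Rspace^{6}$, cones the image of $f$ from three vertices placed in three $5$-hyperplanes through $\Rspace^{4}$, and then performs two explicit local modifications near each residual intersection point so that the resulting map of $[3]*K$ into $\Rspace^{6}$ has vanishing intersection cocycle. Your plan instead is to produce a $\Zring_2$-equivariant map $\Delta([3]*K)\to S^{5}$. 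The step ``$\vartheta(K)=0$ implies an equivariant map $\Delta(K)\to S^{3}$'' is fine by obstruction theory, and ``an equivariant map $\Delta([3]*K)\to S^{5}$ forces $\vartheta([3]*K)=0$'' is fine by Proposition~\ref{p:equivmap} and the index computation for spheres. But the crucial middle step — assembling an equivariant $\Delta([3]*K)\to S^{5}$ from equivariant maps $\Delta(K)\to S^{3}$ and $\Delta([3])\to S^{1}$ — is asserted rather than proved, and the heuristic you offer (the join identity $S^{3}*S^{1}=S^{5}$) does not apply directly: the deleted \emph{product} of a join is not the join of deleted products. $\Delta([3]*K)$ has cells $(S_1\,\tau_1)\times(S_2\,\tau_2)$ with $S_1\cap S_2=\emptyset$ in $[3]$ and $\tau_1\cap\tau_2=\emptyset$ in $K$, which is strictly larger than $\Delta([3])*\Delta(K)$, and there is no automatic equivariant map from the former to the latter. (The clean join factorization holds for deleted \emph{joins}, not deleted products.) This is precisely the content of the companion paper \cite{Par19}, which the present paper explicitly cites as an alternative geometric route, so the step is known to be provable — but your sketch only names it as ``the most delicate step'' without carrying it out. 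As it stands, this is a genuine gap: you would need either to prove the passage from an equivariant map on $\Delta(K)$ to one on $\Delta([3]*K)$ directly (roughly the content of \cite{Par19} using \cite{Sko02}), or to fall back to a direct construction of the cocycle as the paper does.
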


Since for $d>2$ the vanishing of the van Kampen obstruction implies that $K$ embeds into $\Rspace^{2d}$ the following corollary is immediate. 
\begin{corollary}\label{c:main}
Let $d\neq 2$ and $K$ be a $d$-dimensional simplicial complex that does not embed into $\Rspace^{2d}.$  Then $[3]*K$ does not embed into $\Rspace^{2d+2}$.  
\end{corollary}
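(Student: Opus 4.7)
The plan is to chain together three facts, of which only Theorem \ref{t:main} itself carries any real content. First, I would invoke the completeness half of van Kampen's embedding theorem in the range $d \neq 2$: for a $d$-complex $K$ with $d \neq 2$, if $K$ does not embed into $\Rspace^{2d}$ then $\vartheta(K) \neq 0$. This is the Haefliger--Weber theorem in the metastable range $d \geq 3$ (equivalently Shapiro--Wu), and for $d = 1$ it reduces to the classical fact that the mod-$2$ van Kampen obstruction of a non-planar graph is non-zero (detected by any topological $K_5$ or $K_{3,3}$ subgraph). This is precisely the step that forces the hypothesis $d \neq 2$.

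Next, I would feed the resulting $\vartheta(K) \neq 0$ into Theorem \ref{t:main} to conclude that $\vartheta([3]*K) \neq 0$. Finally, I would apply the easy, unconditional direction of van Kampen's theorem --- any PL embedding of an $m$-complex into $\Rspace^{2m}$ forces the van Kampen obstruction to vanish --- to the $(d+1)$-dimensional complex $[3]*K$. Its contrapositive yields the desired non-embeddability into $\Rspace^{2(d+1)} = \Rspace^{2d+2}$.

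The entire difficulty of the corollary is therefore absorbed into Theorem \ref{t:main}; once that theorem is in hand, the corollary is a three-line syllogism. The only subtle point worth flagging is that completeness of $\vartheta$ is invoked only for $K$, whose dimension is controlled by the hypothesis $d \neq 2$, and never for $[3]*K$. For the complex $[3]*K$ one uses only the easy necessity direction, which is unconditional on dimension; thus even the borderline case $d = 1$, in which $[3]*K$ is a $2$-complex lying precisely in the forbidden dimension for completeness of $\vartheta$, causes no trouble.
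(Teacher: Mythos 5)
Your proposal is correct and is exactly the paper's argument: completeness of the van Kampen obstruction for $d\neq 2$ gives $\vartheta(K)\neq 0$, Theorem \ref{t:main} transfers this to $\vartheta([3]*K)\neq 0$, and the unconditional necessity direction rules out an embedding of $[3]*K$ into $\Rspace^{2d+2}$. Your remark that completeness is invoked only for $K$ (so the case $d=1$, where $[3]*K$ is a $2$-complex, is harmless) is a correct and worthwhile clarification of what the paper leaves implicit.
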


From a different viewpoint, we have the following.
\begin{corollary}
Let $d>1$ be such that $d-1 \neq 2$ and $K$ a $d$-dimensional simplicial complex embeddable into $\Rspace^{2d}.$  Then the intersection of three links of vertices of $K$ is a $(d-1)$-complex that embeds into $\Rspace^{2d-2}$.  
\end{corollary}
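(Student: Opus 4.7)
The plan is to reduce the statement to the contrapositive of Corollary~\ref{c:main} via a simple structural observation about intersections of links. I would fix three vertices $v_1, v_2, v_3$ of $K$ and set $L := \mathrm{lk}(v_1)\cap \mathrm{lk}(v_2)\cap \mathrm{lk}(v_3)$, a simplicial complex of dimension at most $d-1$. By hypothesis, $K$ admits a PL embedding into $\Rspace^{2d}$, and I must show that $L$ embeds into $\Rspace^{2d-2}$.

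The central observation on which the whole argument rests is that $[3]*L$ is already a subcomplex of $K$. The simplices of $[3]*L$ come in three types: simplices $\sigma$ of $L$, the isolated vertices $v_i$, and joins $v_i * \sigma$ for $\sigma\in L$. The first two types are in $K$ because $L\subseteq K$ and each $v_i$ is a vertex of $K$. For the third, the definition of $L$ as the intersection of the three links says precisely that $v_i*\sigma \in K$ for every $i\in\{1,2,3\}$. No further simplices arise, because $[3]$ has three vertices but no edges, so in particular no faces of the form $v_iv_j*\sigma$ occur in $[3]*L$. Hence $[3]*L \subseteq K$.

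Next, I would restrict the given embedding of $K$ into $\Rspace^{2d}$ to the subcomplex $[3]*L$ and apply the contrapositive of Corollary~\ref{c:main} to $L$, viewed as a $(d-1)$-dimensional complex with $d-1 \neq 2$ by hypothesis. That contrapositive reads: if $[3]*L$ embeds into $\Rspace^{2(d-1)+2} = \Rspace^{2d}$, then $L$ embeds into $\Rspace^{2(d-1)} = \Rspace^{2d-2}$. The embedding of $[3]*L$ produced in the previous step supplies the hypothesis, and the conclusion is exactly what is required.

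I do not foresee a substantive obstacle: the argument is essentially formal once the subcomplex inclusion is observed, and both dimensional conditions $d>1$ and $d-1\neq 2$ are inherited verbatim from the needs of Corollary~\ref{c:main} applied in dimension $d-1$. The only minor point to record is that $v_i \notin L$ for any $i$, so that the external join $[3]*L$ is well-defined as written; this is automatic since a link never contains its apex. If there is any delicacy at all, it is checking that the definitions of ``link'' and of the join $[3]*L$ are compatible with the combinatorial argument used to prove $[3]*L\subseteq K$; once that is in place the remainder is a one-line appeal.
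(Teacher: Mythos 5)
Your proof is correct and is exactly the intended derivation: the paper presents this corollary as the ``link'' reformulation of Corollary~\ref{c:main}, resting precisely on the observation that $[3]*L\subseteq K$ for $L$ the intersection of the three links, followed by the contrapositive of Corollary~\ref{c:main} in dimension $d-1$. The only pedantic point worth recording is the degenerate case $\dim L<d-1$, where Corollary~\ref{c:main} with parameter $d-1$ does not literally apply, but there the conclusion is trivial since any $k$-complex embeds into $\Rspace^{2k+1}\subseteq\Rspace^{2d-2}$.
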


\paragraph{Remark}
The results of this paper about embeddability of $[3]*K$ and its van Kampen obstruction can be proved using simpler geometric arguments. These proofs are presented in \cite{Par19} and use results of \cite{Sko02}. This approach works by assuming that a $\Zring_2$-equivariant map $\Delta([3]*K) \rightarrow S^{m+1}$ exists and then deduce the existence of an equivariant map $\Delta(K) \rightarrow S^{m-1}$ and vice versa, for appropriate $m$. However, in the case where the vanishing of embedding classes $A^{m+2}(\Delta([3]*K))$ does not imply existence of an equivariant map into $S^{m+1}$ the geometric approach seems to fail to prove Theorem~\ref{t:smithclass}.

\paragraph{}
We have also the following geometric characterization of the vanishing of the van Kampen obstruction.
\begin{corollary}
Let $K$ be a $d$-dimensional simplicial complex, $d \geq 1$. Then $\vartheta(K)=0$ if and only if $[3]*K$ embeds into $\Rspace^{2d+2}$.
\end{corollary}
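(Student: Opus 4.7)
The plan is to reduce this corollary to Theorem~\ref{t:main} together with two classical facts about the van Kampen obstruction: (i) for any simplicial complex $L$, $\vartheta(L)=0$ is a \emph{necessary} condition for embedding $L$ into twice its dimension; and (ii) when $\dim L \neq 2$, it is also \emph{sufficient}. Both facts are cited in the paragraph immediately preceding Corollary~\ref{c:main}.

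The reverse direction is essentially free. If $[3]*K$ embeds PL into $\Rspace^{2d+2}$, then by (i) applied to $L=[3]*K$ (which has dimension $d+1$), we get $\vartheta([3]*K)=0$. Theorem~\ref{t:main} then forces $\vartheta(K)=0$.

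For the forward direction I would split on whether the dimension of $[3]*K$, namely $d+1$, equals $2$. When $d\geq 2$ we have $d+1\geq 3 \neq 2$, so (ii) applies to $[3]*K$. Assuming $\vartheta(K)=0$, Theorem~\ref{t:main} gives $\vartheta([3]*K)=0$, and (ii) yields an embedding of $[3]*K$ into $\Rspace^{2(d+1)}=\Rspace^{2d+2}$, which is what we want.

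The only case left is $d=1$, which is exactly the case where $[3]*K$ is a $2$-complex and (ii) is unavailable; this is what I expect to be the main obstacle. Fortunately, here $K$ is a graph and $\vartheta(K)=0$ is the classical van Kampen obstruction for planarity, which is complete in dimension $1$, so $K$ embeds in $\Rspace^2$. I would then invoke the standard PL join-embedding construction: if $A$ embeds in $\Rspace^m$ and $B$ embeds in $\Rspace^n$, placing them in skew affine subspaces of $\Rspace^{m+n+1}$ and taking line-segment joins gives an embedding of $A*B$ into $\Rspace^{m+n+1}$. Applied to $[3]\hookrightarrow\Rspace^1$ and $K\hookrightarrow\Rspace^2$ this produces an embedding $[3]*K \hookrightarrow \Rspace^{4}=\Rspace^{2d+2}$, completing the case $d=1$ and hence the corollary.
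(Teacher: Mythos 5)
Your proposal is correct and follows essentially the same route as the paper: the reverse direction and the case $d\geq 2$ go through Theorem~\ref{t:main} plus necessity/completeness of the obstruction, and the case $d=1$ uses completeness for graphs to get planarity of $K$ and then the join construction to embed $[3]*K$ into $\Rspace^4$. The only difference is that you spell out the skew-subspace join embedding explicitly, which the paper leaves implicit.
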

If $\vartheta(K)=0$ then from Theorem \ref{t:main}, $\vartheta([3]*K)=0$. If $d > 1$ then $[3]*K$ embeds into $\Rspace^{2d+2}$ since the obstruction is complete in these dimensions. The case $d=1$ follows since the obstruction is complete for $d=1$ and hence $K$ is planar, and $[3]*K$ then embeds into $\Rspace^4$. Conversely, if $[3]*K$ embeds into $\Rspace^{2d+2}$ then $\vartheta([3]*K)=0$ and Theorem \ref{t:main} implies $\vartheta(K)=0$.

We also remark that, to the best of our knowledge, Theorem \ref{t:main} is the first result that constructs non-embeddable $(d+1)$-complexes (into $\Rspace^{2d+2}$), from arbitrary $d$-complexes, which have non-zero van Kampen obstruction. Using it, one can construct many new non-embeddable complexes by taking for $K$, for instance, infinitely many (minimal) non-embeddable complexes of Zaks \cite{Zak69} and Ummel \cite{Umm73}.

One goal of this paper is to provide a self-contained, concise, and perhaps inviting, treatment of the Smith classes as obstructions to embeddability, as defined by Wu \cite{Wu74}. We endeavor to keep the exposition simple and ``intuitive". The van Kampen obstruction can be defined as an special Smith class \cite{Wu74}. It has other definitions that we also explain. Moreover, we review and prove certain properties of the van Kampen obstruction. We also discuss the algorithmic and computational aspects of the various obstruction classes.

We hope that this exposition helps bringing to light the theory of Smith classes and that this theory is used in the future for attacking other geometric problems, as it has been the case for the embeddability problem.

\paragraph{The algebraic non-embeddability conjectures}
This study can be considered the first steps towards algebraic generalizations of the classical geometric non-embeddability results. These algebraic generalizations can be phrased as the following conjectures.
\begin{Conjecture}[Algebraic Gr\"unbaum-van Kampen-Flores Conjecture]
Let $K,L$ be simplicial complexes such that $\vartheta(K)\neq 0$ and $\vartheta(L) \neq 0$. Then $\vartheta(K*L) \neq 0$. 
\end{Conjecture}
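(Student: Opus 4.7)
The plan is to mimic, at a higher level of generality, the method the paper uses for Theorem~\ref{t:smithclass}, replacing the three points $[3]$ by a general complex $L$. First I would reformulate $\vartheta(K)$ and $\vartheta(K*L)$ in terms of the deleted join $\Delta^*(K)$ (rather than the deleted product $\Delta(K)$), since the deleted join interacts cleanly with the join operation. A short combinatorial check shows that $\Delta^*(K*L) \cong \Delta^*(K) * \Delta^*(L)$ as simplicial $\Zring_2$-complexes, with the diagonal $\Zring_2$-action on the right: a pair of disjoint simplices in $K*L$ separates into a pair of disjoint simplices in $K$ together with a pair of disjoint simplices in $L$, which is exactly the data of a simplex of the right-hand side.

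Next, the goal is to set up a Künneth-type pairing in $\Zring_2$-equivariant cohomology adapted to joins. For free $\Zring_2$-spaces the reduced cochain complex of a join is, up to a degree shift by one, the tensor product of the reduced cochain complexes of the factors, and this identification is $\Zring_2$-equivariant for the diagonal action. This produces a pairing
\[
H^{2d+1}_{\Zring_2}\bigl(\Delta^*(K)\bigr) \otimes H^{2e+1}_{\Zring_2}\bigl(\Delta^*(L)\bigr) \longrightarrow H^{2(d+e+1)+1}_{\Zring_2}\bigl(\Delta^*(K*L)\bigr)
\]
landing in the correct top degree for the van Kampen obstruction of the $(d+e+1)$-complex $K*L$. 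The central claim to verify is then the multiplicativity $\vartheta(K*L) = \vartheta(K) \cdot \vartheta(L)$ (up to sign) under this pairing. At the cochain level this should reflect the geometric fact that the join of a generic PL map $K \to \Rspace^{2d}$ with a generic PL map $L \to \Rspace^{2e}$ is a generic PL map $K*L \to \Rspace^{2d+2e+2}$ whose double-point set decomposes (in the join sense) into the double-point sets of the two factors, so that the primary obstruction cocycles assemble accordingly.

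The hardest step is precisely this multiplicativity, and it splits into two technical difficulties. The first is keeping track of the twisted integer coefficients, since the $\Zring_2$-action on the coefficient module is non-trivial for the van Kampen obstruction; this forces the Künneth pairing into the form appropriate for local coefficient systems. The second is the possible $\mathrm{Tor}$ contribution: a priori, $\vartheta(K*L)$ could lie in the $\mathrm{Tor}$ part of a Künneth short exact sequence, and one must show it does not. Both difficulties should be handled uniformly by lifting the argument to the special Smith class framework used in this paper, since Smith classes of joins of periodic transformations satisfy a product formula (cf.~\cite{Wu74}) that both absorbs the twisting and pins down the obstruction in the tensor summand.

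Once the multiplicativity is in hand the conclusion is immediate: $\vartheta(K) \neq 0$ and $\vartheta(L) \neq 0$ force their tensor product to be a nonzero class in the $\mathrm{Tor}$-free summand, so $\vartheta(K*L) \neq 0$. Specializing to $L = [3]$ should recover Theorem~\ref{t:smithclass} as a consistency check, and specializing further to $K = L = [3]$ recovers a special case of Flores's classical non-embeddability theorem.
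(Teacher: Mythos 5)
This statement is stated in the paper as a \emph{conjecture}: the paper offers no proof of it, and only establishes the special case $L=[3]$ (Theorem~\ref{t:smithclass} and Theorem~\ref{t:main}). So your outline, if it worked, would settle an open problem; it does not. The most concrete failure is the last step. The van Kampen obstruction is a torsion class of order two which, by Lemma~\ref{l:zeroooncycle}, vanishes on every cycle, so it lives entirely in the $\mathrm{Ext}$ part of the relevant cohomology group. Even granting a clean external product formula $\vartheta(K*L)=\vartheta(K)\cdot\vartheta(L)$ landing in a tensor summand, the inference ``both factors nonzero $\Rightarrow$ the tensor is nonzero'' is false for torsion classes: in a tensor product of abelian groups the tensor of two nonzero elements of order two can vanish (e.g.\ $2\otimes 2=0$ in $\Zring_4\otimes\Zring_4$). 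This is exactly the difficulty the paper's proof confronts in the $[3]$ case, where mod~2 and cycle-pairing arguments are insufficient and a certificate chain with $\partial c=nd$ must be built by hand via Lemma~\ref{l:generalchain}; your proposal gives no mechanism for producing such a certificate for a general second factor $L$.

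There are also two structural gaps upstream. First, the identification $\Delta^*(K*L)\cong\Delta^*(K)*\Delta^*(L)$ is correct, but reformulating $\vartheta(K)$ on the deleted join is lossy: the obstruction the paper works with lives on the deleted product $\Delta(K)$, and its image in the cohomology of the deleted join is in general a strictly weaker invariant, so nonvanishing can be destroyed by this very first move. Second, the proposed ``K\"unneth-type pairing'' for the diagonal involution is not an application of the ordinary K\"unneth theorem: the quotient of $X*Y$ by the diagonal action is not the join of the quotients, so one needs a K\"unneth spectral sequence over the group ring $\Zring[\Zring_2]$ with twisted integer coefficients, whose higher Tor terms do not vanish; the product formulas in \cite{Wu74} concern Cartesian products (the Menger-type situation) and do not supply multiplicativity of the integral obstruction under joins. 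Your consistency check also cuts the other way: the paper's proof for $L=[3]$ proceeds through $[3]$-specific chain constructions such as $\zeta=vw(c)+wu(c)-vu(c)$, not through any product formula, which is a strong indication that no such formula was available to the author.
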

The above conjecture relates to the following theorem of Gr\"unbaum \cite{Gru69}.
\begin{theorem}\label{t:grunbaum}
For $i=1,\ldots,m$, let $d_i \geq 0$ be integers and set $d = \sum_i {d_i}+m-1$. Define $K_i$ to be the $d_i$-skeleton of a $(2d_i+2)$-simplex. Then the complex, $$K = K_1 * K_2 * \cdots K_m,$$ is a $d$-complex that does not embed into $\Rspace^{2d}$.  
\end{theorem}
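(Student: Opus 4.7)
The plan is to run the classical Borsuk--Ulam argument on $\Zring_2$-equivariant deleted \emph{joins} (denoted here by $K*_\Delta K$) rather than on the deleted \emph{products} $\Delta(K)$ used elsewhere in the paper, because deleted joins behave well under the join operation and their equivariant homotopy type is explicit for each factor $K_i$. The first step, which is the van Kampen--Flores content, is to identify $K_i *_\Delta K_i$ equivariantly with $S^{2d_i+1}$ carrying the antipodal $\Zring_2$-action. The deleted join of the full simplex $\Delta^{2d_i+2}$ is the boundary of a $(2d_i+3)$-dimensional cross-polytope, hence $S^{2d_i+2}$; since $K_i$ is its $d_i$-skeleton and any two disjoint faces of $\Delta^{2d_i+2}$ together use at most $2d_i+3$ vertices, only pairs $(\sigma,\tau)$ with $\dim\sigma,\dim\tau\le d_i$ survive in $K_i *_\Delta K_i$, and the resulting equivariant subcomplex collapses onto a $(2d_i+1)$-sphere.

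The second step is a general commutation lemma: for any complexes $L_1,\dots,L_m$ there is an equivariant homeomorphism
\[
(L_1 * \cdots * L_m)*_\Delta(L_1 * \cdots * L_m) \;\cong\; (L_1 *_\Delta L_1) * \cdots * (L_m *_\Delta L_m),
\]
because a pair of disjoint faces of $L_1*\cdots*L_m$ factors uniquely into pairs of disjoint faces in each $L_i$ and the swap involution commutes with this factorisation. Combined with step one and the identity $S^{a_1}*\cdots*S^{a_m}\cong S^{a_1+\cdots+a_m+m-1}$, this yields $K *_\Delta K \cong S^{N}$ equivariantly, where $N=\sum_i(2d_i+1)+(m-1)=2d+1$.

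For the final step, suppose for contradiction that $K$ embeds into $\Rspace^{2d}$ via a PL map $f$. Then there is a standard $\Zring_2$-equivariant map $K *_\Delta K \to S^{2d}$ sending a formal convex combination $(1-t)(x,0)+t(y,1)$ of points lying in disjoint faces to the normalised vector $((1-t)f(x)-tf(y),\,1-2t)$, which is nowhere zero because $f$ is injective on disjoint simplices. Precomposing with the equivariant homeomorphism from step two produces an equivariant map $S^{2d+1}\to S^{2d}$, contradicting the Borsuk--Ulam theorem.

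The main obstacle is step one: verifying carefully that the deleted join of the $d_i$-skeleton of $\Delta^{2d_i+2}$ has the equivariant homotopy type of a sphere of exactly the expected dimension $2d_i+1$, rather than something smaller. Step two is a formal bookkeeping exercise, and step three is the routine translation of PL embeddings into equivariant maps; the genuine geometric content lies in the sphere identification of step one.
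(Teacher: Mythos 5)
Your argument is correct, and it is essentially the geometric Borsuk--Ulam proof that the paper attributes to Gr\"unbaum and Flores; the paper states Theorem \ref{t:grunbaum} without proof (citing \cite{Gru69}), so there is no internal argument to compare against. The one step you flag as delicate --- that the deleted join $K_i *_\Delta K_i$ of the $d_i$-skeleton of $\Delta^{2d_i+2}$ is equivariantly a $(2d_i+1)$-sphere --- is indeed true: on $2d_i+3$ vertices this skeleton is Alexander self-dual (a vertex set is a face iff it has at most $d_i+1$ elements iff its complement is a non-face), so its deleted join is the Bier sphere of a self-dual complex, a PL $(2d_i+1)$-sphere on which the swap involution acts freely; alternatively, for the final contradiction one only needs the weaker fact that this deleted join has $\Zring_2$-index at least $2d_i+1$, which already follows from its $2d_i$-connectivity.
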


The above theorem follows from the above conjecture. The second conjecture is the following.
 
\begin{Conjecture}[Algebraic Menger Conjecture]
Let $K,L$ be simplicial complexes such that $\vartheta(K)\neq 0$ and $\vartheta(L) \neq 0$. Then $\vartheta(K\times L) \neq 0$. 
\end{Conjecture}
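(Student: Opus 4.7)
The plan is to mimic, at the cohomological level, the standard product construction one would use for the geometric Menger problem. Let $d = \dim K$ and $e = \dim L$. First I would introduce the natural ``interleaving'' map
\[
\mu: \Delta(K) \times \Delta(L) \longrightarrow \Delta(K \times L), \qquad \bigl((x,x'),(y,y')\bigr) \longmapsto \bigl((x,y),(x',y')\bigr),
\]
and verify that it is well-defined (if $x \neq x'$ then $(x,y) \neq (x',y')$) and equivariant from the diagonal $\Zring_2$-action on the source, given by simultaneous swap in both factors, to the swap action on the target. This $\mu$ is the bridge connecting $\vartheta(K)$, $\vartheta(L)$ and $\vartheta(K \times L)$.

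Second, I would construct a $\Zring_2$-equivariant exterior product $\vartheta(K) \times \vartheta(L)$ in the cohomology of the diagonal quotient $(\Delta(K) \times \Delta(L))/\Zring_2$. Using cocycle representatives $c_K$ and $c_L$ of $\vartheta(K)$ and $\vartheta(L)$---for instance the Wu intersection cocycles obtained from generic PL maps $f: K \to \Rspace^{2d}$ and $g: L \to \Rspace^{2e}$---I would form the product cochain $c_K \otimes c_L$ on $\Delta(K) \times \Delta(L)$ and show that $\mu^*\vartheta(K \times L) = \vartheta(K) \times \vartheta(L)$. The geometric content is transparent: the self-intersections of $f \times g$ are in natural bijection with pairs consisting of a self-intersection of $f$ and one of $g$, and after careful sign bookkeeping this should identify the corresponding Smith classes.

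Third, and this is where I expect the principal obstacle to lie, one must deduce non-vanishing of $\vartheta(K \times L)$ from the non-vanishing of $\vartheta(K)$ and $\vartheta(L)$. The $\Zring_2$-equivariant K\"unneth theorem is not a clean tensor product formula: Tate-cohomological correction terms enter because the diagonal involution on a product of free $\Zring_2$-spaces is not comparable in a naive way to the factor-wise actions, so one cannot freely pass between $(\Delta(K) \times \Delta(L))/\Zring_2$ and $(\Delta(K)/\Zring_2) \times (\Delta(L)/\Zring_2)$. A second subtlety is that $\mu$ only captures pairs in $\Delta(K \times L)$ that arise in ``product form'', so $\mu^*$ need not be injective and the non-vanishing witnessed on the source might be lost in the target. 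Circumventing these two issues, most plausibly by combining the special Smith class perspective of Wu with an explicit Steenrod--Smith cup product formula in the spirit of the argument that underlies Theorem \ref{t:smithclass}, will, I expect, form the bulk of any successful proof, and is the reason the statement remains conjectural here rather than a theorem.
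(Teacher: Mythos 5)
The statement you are addressing is posed in the paper as a \emph{conjecture}: no proof is given there (the author notes it was also posed in the reference [ASk14]), and the known results in this direction (Ummel's and M.~Skopenkov's proofs of the geometric Menger theorem for products of graphs) do not establish the algebraic statement. Your proposal is therefore not being measured against a proof in the paper, and, as you yourself concede in your final sentence, it is a strategy outline rather than a proof. The first two steps are sound and standard: the interleaving map $\mu$ is well defined and equivariant from the diagonal involution on $\Delta(K)\times\Delta(L)$ to the factor-swap on $\Delta(K\times L)$, and identifying $\mu^*\vartheta(K\times L)$ with a product of the intersection cocycles of generic maps $f$ and $g$ is plausible (the degrees match, since $2d+2e=2(d+e)$ and $\dim(K\times L)=d+e$).

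The genuine gap is your third step, and it is the entire content of the conjecture. Note first that one of the two obstacles you name is not actually an obstacle: injectivity of $\mu^*$ is irrelevant, because if $\mu^*\vartheta(K\times L)$ is shown to be non-zero then $\vartheta(K\times L)$ is automatically non-zero. The sole issue is whether the class $\vartheta(K)\times\vartheta(L)$ is non-zero in the special ($\delta$-)cohomology of $\Delta(K)\times\Delta(L)$ with the \emph{diagonal} $\Zring_2$-action. Here two things conspire against you: (a) there is no K\"unneth isomorphism expressing $H^*_\delta\bigl(\Delta(K)\times\Delta(L),\,\tau\times\tau\bigr)$ as a tensor product of $H^*_\delta(\Delta(K),\tau)$ and $H^*_\delta(\Delta(L),\tau)$, precisely because the quotient of the product by the diagonal involution is not the product of the quotients; and (b) both $\vartheta(K)$ and $\vartheta(L)$ are elements of order two (as the paper records), so even in a setting where a K\"unneth formula were available, the external product of two non-zero $2$-torsion integral classes can die in the Tor term. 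You have not supplied any mechanism --- certificate chains in the sense of Lemma \ref{l:generalchain}, an explicit equivariant cup-product computation, or otherwise --- for ruling this out, so the proposal does not prove the statement; it correctly locates, but does not close, the open problem.
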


The above conjecture relates to the Menger conjecture \cite{Men29}, now the following theorem.

\begin{theorem}\label{t:menger}
	Let $K,L$ be two non-planar graphs, then $K \times L$ does not embed into $\Rspace^4$.
\end{theorem}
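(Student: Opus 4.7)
The plan is to deduce Theorem \ref{t:menger} from the non-vanishing of the van Kampen obstruction $\vartheta(K\times L)$, following the equivariant-cohomological style used elsewhere in this paper. Since $K$ and $L$ are non-planar graphs and the van Kampen obstruction is a complete obstruction for $d=1$, one has $\vartheta(K)\neq 0$ and $\vartheta(L)\neq 0$. The goal is to show that $\vartheta(K\times L)$, which lives in the equivariant cohomology $H^{4}_{\Zring_2}(\Delta(K\times L);\Zring^-)$ of the deleted product of the $2$-complex $K\times L$, is also non-zero; this already rules out embeddability into $\Rspace^4$ (we do not need completeness in dimension two, only that the obstruction is an obstruction).

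The key geometric ingredient I would exploit is the natural inclusion
\[
\iota\colon \Delta(K)\times \Delta(L)\hookrightarrow \Delta(K\times L),\qquad ((x,x'),(y,y'))\longmapsto ((x,y),(x',y')),
\]
which is well defined because $x\neq x'$ already forces $(x,y)\neq (x',y')$. This inclusion is $\Zring_2$-equivariant when the left-hand side is endowed with the \emph{diagonal} factor-swap action (swap $x\leftrightarrow x'$ and $y\leftrightarrow y'$ simultaneously) and the right-hand side with the usual factor-swap action. Consequently, if an embedding $K\times L\hookrightarrow \Rspace^4$ existed, it would produce a $\Zring_2$-equivariant map $\Delta(K\times L)\to S^3$, and precomposition with $\iota$ would yield an equivariant map $\Delta(K)\times \Delta(L)\to S^3$.

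I would then argue in two steps. First, choosing the van Kampen cocycles multiplicatively from the Alexander/\v{C}ech-type construction on the product complex $K\times L$, one should be able to identify the pullback $\iota^{*}\vartheta(K\times L)$ with the equivariant external cup-product $\vartheta(K)\smile \vartheta(L)$ inside $H^{4}_{\Zring_2}(\Delta(K)\times\Delta(L);\Zring^-)$. Second, an equivariant K\"unneth argument for the diagonal $\Zring_2$-action, with the appropriate sign coefficients inherited from the orientation of $S^3=S^1*S^1$, should show that this external product is non-zero: each factor is a top-degree class on its deleted product, and the relevant K\"unneth cross-product maps inject since the factors are torsion-free in the top degree. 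Combining the two steps yields $\iota^{*}\vartheta(K\times L)\neq 0$, hence $\vartheta(K\times L)\neq 0$, contradicting the assumed embedding.

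The main obstacle is the second step: setting up the equivariant K\"unneth formula for the diagonal $\Zring_2$-action on $\Delta(K)\times \Delta(L)$ with twisted coefficients, and verifying that the cross-product of the two van Kampen classes survives there rather than being killed by the K\"unneth spectral sequence differentials or by torsion in the lower-degree equivariant cohomology of the factors. A secondary subtlety is the multiplicativity of cocycle representatives under $\iota$, which forces one to work with a symmetric, product-compatible model of the van Kampen cochain on $(K\times L)\times (K\times L)$; once that model is in place, the pullback identification reduces to a straightforward bookkeeping of signs.
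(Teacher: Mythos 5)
Your proposal does not reconstruct the paper's argument, because the paper does not prove Theorem~\ref{t:menger} at all: it cites it as a known result, proved geometrically by Ummel \cite{Umm78} for products of two graphs and more generally by M.~Skopenkov \cite{MSk03} for products of several graphs. What you are actually sketching is a proof of the \emph{Algebraic Menger Conjecture} stated just above the theorem, namely that $\vartheta(K)\neq 0$ and $\vartheta(L)\neq 0$ imply $\vartheta(K\times L)\neq 0$. The paper deliberately records this as an open conjecture (``these algebraic generalizations can be phrased as the following conjectures''), and only remarks that Theorem~\ref{t:menger} \emph{would} follow from it via completeness of the van Kampen obstruction in dimension $1$. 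So the logical direction of your argument is right, but it rests on a statement the paper itself labels as unproven.

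The precise gap is the one you flag yourself: step two, the claim that the equivariant external product $\vartheta(K)\smile\vartheta(L)$ survives into $H^{4}_{\Zring_2}\bigl(\Delta(K)\times\Delta(L)\bigr)$ with the diagonal $\Zring_2$-action. A K\"unneth-type argument for the diagonal involution with twisted coefficients is exactly where the difficulty lives --- the relevant cross products need not be injective, torsion and sign issues enter, and there is no off-the-shelf ``equivariant K\"unneth'' statement in this setting that delivers the conclusion. This is not a detail to be checked later; it is the open content of the conjecture, and declaring it handled would be circular. Step one (identifying $\iota^{*}\vartheta(K\times L)$ with a product of van Kampen cochains for a product-compatible model) is plausible and is the kind of bookkeeping the paper's Smith-class formalism is built to support, but it does not get you past the real obstruction. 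If you want a proof of Theorem~\ref{t:menger} at the level of this paper, the intended route is geometric: either Ummel's argument or M.~Skopenkov's, both of which bypass the cohomological product question entirely.
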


Since the van Kampen obstruction is complete for $d=1$, Theorem \ref{t:menger} is a consequence of the algebraic Menger conjecture.
The Menger conjecture was first proved by Ummel \cite{Umm78} for product of two graphs using advanced algebraic topology techniques. M. Skopenkov \cite{MSk03} gave an elegant geometric proof of the more general case of product of multiple graphs. The above conjecture has been posed also in \cite{ASk14}.

\paragraph{Related work}
The classical theorem of Kuratowski states that a graph is planar if and only if it doesn not contain a subgraph homeomorphic to $K_5$ or to $K_{3,3}$. During the 1930's van Kampen \cite{vKam33} and Flores \cite{Flo32,Flo33} proved that the complexes $\sigma_{2d+3}^d$ are not even continuously embeddable into $\Rspace^{2d}$. Here, the complex $\sigma_{2d+3}^d$ can be defined as all the $d$-simplices (and their faces) built using $2d+3$ vertices, i.e., the $K_i$ of Theorem \ref{t:grunbaum}. For $d=1$ this is $K_5$. This settled the question of existence of such complexes. Any $d$-complex embeds into $\Rspace^{2d+1}$. Flores \cite{Flo33,Ros60}, also proved that the multiple join of 3-vertices with itself $[3]*[3]*\cdots*[3]$ does not embed into the space of dimension equal to twice its dimension. These two families of complexes are also \textit{minimal}, in the sense that removing one simplex from them results in an embeddable complex. See also \cite{Wu74} Section III.3 Examples 3 and 4. 

Gr\"unbaum \cite{Gru69} proved Theorem \ref{t:grunbaum} by geometric methods and using the Borsuk-Ulam theorem. Moreover, he shows that the complexes appearing in Theorem \ref{t:grunbaum} are minimal. Ummel \cite{Umm73} gave a different prove of the Gr\"unbaum theorem but still he relies on the geometric arguments of \cite{Gru69}. Following \cite{Zak69}, He also proves the existence of an infinite number of minimal simplicial complexes of any dimension $d>1$. These are built by local modifications on a simplex.

In \cite{Sch93}, the author proves a generalization of Theorem \ref{t:grunbaum} by an algebraic method. However, his proof works only for the very restricted class of joins of nice complexes. As defined in \cite{Sch93}, a complex $K$ is \textit{nice}, if for any set of vertices of $K$, either the set defines a simplex, or the complement of the set defines a simplex and not both. One notes that the $\sigma_{2d+3}^d$ are nice.

The method of van Kampen for proving the non-embeddability of $\sigma_{2d+3}^d$ later was developed by Shapiro \cite{Sha57} and Wu \cite{Wu74} to proving the non-vanishing of  certain cohomology classes, called the embedding classes. The van Kampen obstruction therefore is an special case. They proved that for $d\neq 2$ the obstruction is complete for embeddings into $\Rspace^{2d}$. For $d= 2$ the van Kampen obstruction is not complete \cite{Freetal94}. As mentioned, there is no known algorithm for deciding PL embeddability of $2$-complexes into $\Rspace^4$. It is known that this problem is NP-hard \cite{Matetal10}, though. See the latter reference for more information on the complexity of deciding embeddability. See also \cite{ASk14} for more information on the embeddability problems.

Wu has used Smith classes as an obstruction for embeddability. He also proves that these Smith classes and embedding classes are equal. However, the Smith classes have a much more systematic definition. See the book \cite{Wu74} for complete details.

\paragraph{The method of the proof of the main result}
To prove Theorem \ref{t:smithclass} we must show that $A^m(K) \neq 0$ implies $A^{m+2}([3]*K) \neq 0$. Our task here is therefore, demonstrating that a certain cohomology class is non-zero. If we are computing with field coefficients, then, the cohomology is dual to homology, and this implies that for a non-zero cohomology class $[\phi]$ there is always a non-zero cycle $z$ for which, $\phi(z)\neq 0$. This $z$ can be taken as a certificate of the fact that $[\phi]$ is non-zero. Our aim is to find such a certificate for $A^{m+2}([3]*K)$. First we consider the case in which the mod 2 reduction of $A^m(K)$ does not vanish. In this case, indeed such a cycle $z$ can be defined. This special case is not necessary for the proof of the general case. With integer coefficients, there exist cohomology classes whose representatives vanish on all cycles but still are non-zero. The embedding class is always of this type. Therefore, it is not possible to certify its non-vanishing by a cycle. However, we prove in Lemma \ref{l:generalchain}, that such a certificate can be always found among chains with certain properties\footnote{This lemma is new to the author.}. Now the fact that $A^{m+2}([3]*K)\neq 0$ is proved by constructing the certificate chain from a certificate for $A^m(K)$. 

The origins of our definition of the certificate chains is the following simple case. Let us take as $K$ a graph that consists of two disjoint circles $z_1,z_2$. Let $K$ be embedded in a 3-space $\Rspace^3 \subset \Rspace^4$ such that the two circles are linked, see Fig. \ref{f:links}. We take three vertices $v,w,u$ and form the join $[3]*K$. We now prove that the embedding of $K$ given in Fig. \ref{f:links} does not extend to an embedding $f$ of $[3]*K$ into $\Rspace^4$ such that $f|_{\Rspace^3}$ coincides with the figure.

\begin{figure}\label{f:links}
	\centering
	
	\includegraphics[scale=0.4]{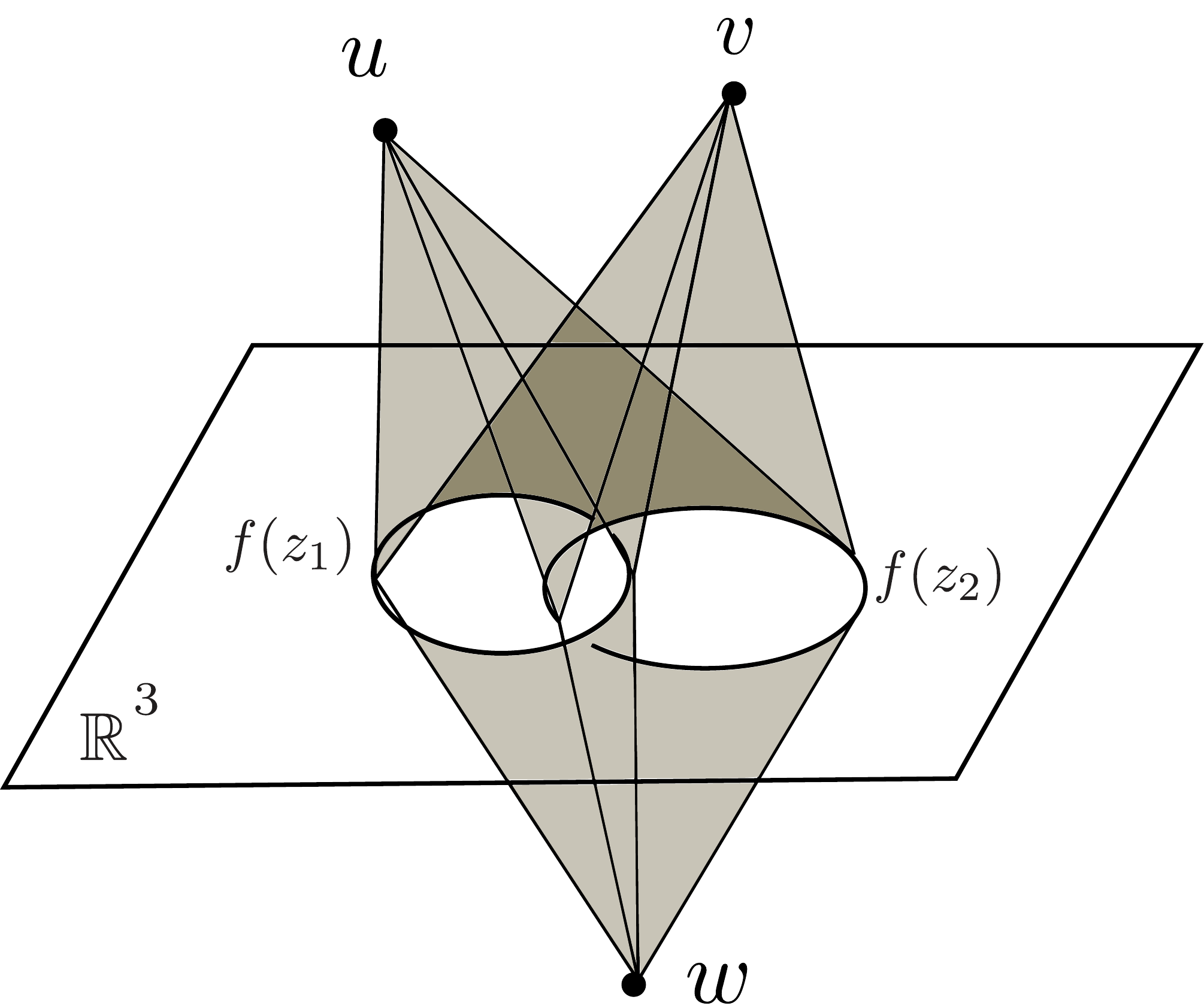}
	\caption{Schematic view of embedding of two circles as a link in $\Rspace^3,$ and, join of the image with three vertices.}
\end{figure}

We work with $\Zring_2$ coefficients. Let there be an extension that is an embedding and satisfying our conditions. Take the cycle $z=z_1 \times z_2 \in C(\Delta(K),\Zring_2)$. We first need two $3$-chains with boundary $z$. Let them be $c_1 = vz_1 \times z_2$ and $c_2 = wz_1 \times z_2$, where $vz_1$ means the join of $v$ and $z_1$, etc. It is easily seen that $\partial(c_1)=\partial(c_2)=z$. Hence, we have a $3$-cycle $c_1+c_2 = (v+w)z_1 \times z_2$. It is not difficult to observe that the $2$-cycle $(v+w)z_1$ and the $1$-cycle $z_2$ would be linked in $4$-space, in any extension satisfying our condition. This is equivalent to the fact that $(v+w)z_1 \times z_2$ has non-zero linking number with the diagonal in the product map into $\Rspace^8 \backslash \text{diagonal}$. Hence, there can be no $4$-chain $d\in C(\Delta([3]*K))$ with $\partial d = c_1+c_2$. But such a chain exists; take $d = vz_1\times uz_2 + wz_1 \times uz_2$. One easily checks that $\partial d = c_1+c_2$.

An important point to mention here is that the above cycle $z$ --our \textit{bad} cycle-- only is bad in the given embedding of $K$ in $\Rspace^3$. Indeed, $[3]*K$ is embeddable into $\Rspace^4$. However, the non-vanishing of the mod 2 van Kampen obstruction guarantees that such a bad cycle or chain always exist.

\paragraph{Outline of the paper}
In Section \ref{s:basic} we review some background definitions. Next, in Section~\ref{s:smith}, we present an overview of the Smith theory of complexes with a periodic action on them. We restrict ourselves to the simple case of free actions of $\Zring_2$, while the theory applies to a more general periodic actions with fix-points. In Section \ref{s:embeddingclasses} we define the embedding classes as obstructions to embeddability into Euclidean spaces. Finally, in Section~\ref{s:main}, we present the proof of our main result.

\section{Basic concepts}\label{s:basic}

In this section, we explain briefly the necessary background material and definitions needed in the later sections. In some places we favor simplicity of the exposition to the generality of notions. 

\paragraph{$\Zring_2$-Spaces}
For our purposes, it suffices to take as a ``space" a polyhedral complex. This is a cell complex where each (open) cell is the interior of a polyhedral ball in some euclidean space. Even more, we can restrict to complexes whose cells are either open simplexes or product of open simplices. The theory and results, of course, apply to a very general definition of a topological space, see \cite{Wu74}. We therefore consider our complex $K$ as an abstract set, each element of which is a polyhedral cell. We also denote the underlying space of such a complex by $K$. 

A \textit{$\Zring_2$-space} $(K,\tau)$, is a polyhedral cell complex $K$ with an action of non-trivial element of $\Zring_2=\{1, \tau\}$ on $K$ by a cellular homeomorphisms that permutes cells. We assume that the cellular action of $\tau$ is \textit{simple}, by which we mean, $\tau \sigma \neq \sigma$ for any cell $\sigma$. 

When there is no danger of confusion we write $K$ instead of $(K,\tau)$.

\paragraph{Transformations on product chain complexes}
Let $(K,\tau)$ be a $\Zring_2$-space. Whenever $K$ is a subcomplex of the cell complex $L \times L$, for $L$ a simplicial complex, we assume that the orientations on cells of $K$ are induced by fixed orientations on cells of $L$. It is well-known that, with this convention, for $\sigma_1 \times \sigma_2 \in K$ we have $$\partial(\sigma_1 \times \sigma_2) = \partial \sigma_1 \times \sigma_2 + (-1)^{d(\sigma_1)}\sigma_1\times \partial \sigma_2.$$ In the above and elsewhere $d(\sigma_1)$ denotes the dimension of the cell $\sigma_1$.  On such a complex $K \subset L\times L$, we are interested on the action of $\tau$ given by $$\tau (\sigma_1\times \sigma_2) = \sigma_2 \times \sigma_1.$$ It is not difficult to see that with the above convention on orientations we have the identity $$ \tau_\sharp(\sigma_1 \times \sigma_2) = (-1)^{d(\sigma_1)d(\sigma_2)}\sigma_2\times \sigma_1$$ where $\tau_\sharp$ is the homomorphism of the chain complex onto itself defined by $\tau$. The above formulas will be used extensively in this paper.

\paragraph{Fundamental domains}
Let $(K,\tau)$ be a $\Zring_2$-space. A set of cells $F\subset K$ is a \textit{fundamental domain} for the action of $\tau$ on $K$ if $F \cup \tau F = K$, $F \cap \tau F=\emptyset$.  That is, $F$ has exactly one cell from each orbit. Then, cells of a fixed dimension $d$ in $F$ form a fundamental domain for the $d$-cells of $K$. 
It is then clear that each chain $c \in C(K)$ can be written as $$c = \sum_{\sigma \in F} n_\sigma \sigma + n^\tau_\sigma \tau_\sharp \sigma,$$ where $n_\sigma, n_\sigma^\tau$ are integers. Note that forming a fundamental domain requires choosing a cell from each orbit.  

\paragraph{Deleted products and quotient complexes}
Let $L$ be a simplicial $d$-complex and consider the product complex $L\times L$. The \textit{deleted product} of $L$, denoted $\Delta(L)$, is a subcomplex of $L\times L$ defined as
$$\Delta(L)=\{\sigma_1 \times \sigma_2 \in L\times L; \sigma_1 \cap \sigma_2 = \emptyset \}.$$ In words, the deleted product is defined by cells that are products of vertex disjoint simplices. The action of $\tau$ is defined as above. This action is simple.

Let $(K,\tau)$ be a (simple) $\Zring_2$-space. One can form the quotient space $K /\tau$ by identifying cells that are mapped by $\tau$. Let $\pi: K \rightarrow K/\tau$ be this identification map.

It is standard that the quotient space $K/\tau$ has an abstract cellular structure defined as follows, see \cite{Wu74} II.1 Proposition 1. The cells of $K/\tau$ are orbits of cells of $K$. We write $[\sigma]=\{\sigma,\tau\sigma\}$ for the orbit of $\sigma$. The dimension of the cell $[\sigma]$ is dimension of $\sigma$ and a cell $[\sigma_1]$ is a face of a cell $[\sigma_2]$ if there is a representative for $[\sigma_1]$ that is a face of a representative of $[\sigma_2]$. To induce orientations on cells of $K/\tau$ we need to fix a fundamental domain $F$ for $K$. Then we have a homomorphism $\pi_{\sharp,i}: C_i(K) \rightarrow C_i(K/\tau)$ on $i$-dimensional chain groups, induced by the projection $\pi: K \rightarrow K/\tau$, $\pi(\sigma)=[\sigma]$. We can now define the boundary map for chains of $K/\tau$ as $$\partial ([\sigma])=\pi_\sharp (\partial \sigma),$$ where $\sigma \in F.$ Then, it is routine to check that $\partial\partial[\sigma]=0$ and that $\pi_\sharp$ is a chain map.

\paragraph{Equivariant maps}
Let $(K_1,\tau_1), (K_2,\tau_2)$ be two $\Zring_2$-spaces. A continuous function $f:K_1 \rightarrow K_2$ is ($\Zring_2$) equivariant if $f\tau_1=\tau_2f$. 

We now show that any $\Zring_2$-space $(K,\tau)$ can be mapped equivariantly into a sphere with antipodal action, $(S^{m},\tau)$, for some $m \geq 0$. This is fairly straightforward. We give the infinite-dimensional sphere $S^\infty$ a cell structure with two cells in each dimension, such that the antipodal map exchanges the cells.
Let $F^{(i)}$ denote a fundamental domain for the cells of dimension $i$. We start with $F^{(0)}$ and map them to arbitrary vertices of $S^\infty$. Then, we map $\tau F^{(0)}$ such that the map on 0-skeleton of $K$ is equivariant. Now consider arbitrary $i$ and assume we have already mapped the $(i-1)$-skeleton equivariantly. We take an $i$-cell $\sigma$. We have already mapped $\partial \sigma$, which is an $(i-1)$-sphere, into $S^{i-1}$. This map can be extended trivially onto $\sigma$ with image of $\sigma$ in any of the two closed hemi-spheres of $S^{i}$. The cell $\tau \sigma$ will be extended symmetrically. This finishes the definition of our equivariant map. 

It follows from the fundamental theorems in the theory of fibre bundles that the above map is unique up to equivariant homotopy \cite{Hus94}. This means that any other equivariant map of $(K,\tau)$ into $(S^{\infty}, \tau)$ is homotopic to the map we constructed with a homotopy consisted of equivariant maps.

\paragraph{Remark}
Observe that the map defined above maps a $d$-dimensional complex into $S^{m}$ with $m=d$. The smallest $m$ such that a continuous equivariant $f$ into $(S^m, \tau)$ exists is called the \textit{topological $\Zring_2$-index} of $(K,\tau)$ \cite{Mat08}. This index plays a major role in some important combinatorial problems, see \cite{Mat08}.    

\section{The Smith theory}\label{s:smith}
In this section, we review basics of the theory of P. A. Smith for the study of periodic transformations acting on a space \cite{RSm38,Smi33}. For more details and the general theory refer to \cite{Wu74,Nak56,Aepetal70}. 

\paragraph{Remark}The Smith theory when applied to the special case of complexes that are deleted products with the usual action of $\Zring_2$ on them, results in an alternative and more informative definition of the obstruction classes for embeddability into euclidean spaces, see \cite{Wu74}, Chapter~III. This is our main motivation for their study.
\paragraph{}
For simplicity of the exposition, we present the theory only for the case of actions of $\Zring_2$. However, the arguments given suggest the generalization to the case of the action of $\Zring_p$ for $p$ prime .

\subsection{Special homology groups of $(K,\tau)$}
Let $(K,\tau)$ be a (simple) $\Zring_2$-complex. For simplicity, we assume that $\tau$ is a cell-map that simply permutes the cells of each dimension. The theory, however, applies to a more general setting. The map $\tau$ induces a chain map on the integer chain complex $C(K)$, denote this chain map by $\tau_\sharp$. For any dimension $i$, consider the sequence
\begin{equation}\label{eq:sdexact}
    \begin{split}
        \cdots \rightarrow C_i(K)\xrightarrow{1+\tau_{i\sharp}} C_i(K) \xrightarrow{1-\tau_{i\sharp}} C_i(K) \xrightarrow{1+\tau_{i\sharp}} \cdots.
    \end{split}
\end{equation}

\begin{lemma}\label{lemma:homexact}
The sequence (\ref{eq:sdexact}) is exact.
\end{lemma}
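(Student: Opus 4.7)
The plan is to verify exactness at every position by a direct calculation, exploiting the simple (fixed-point free) action of $\tau$ and the existence of a fundamental domain $F$ of cells. The key observation is that since $\tau$ is an involution, $\tau_\sharp^2 = 1$, so $(1+\tau_\sharp)(1-\tau_\sharp) = 1 - \tau_\sharp^2 = 0$ and likewise $(1-\tau_\sharp)(1+\tau_\sharp) = 0$. Thus the sequence is at least a chain complex, and the work is to show that every kernel element is in the corresponding image.

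The main tool is the unique decomposition
$$
c = \sum_{\sigma \in F^{(i)}} \bigl( n_\sigma \sigma + m_\sigma \tau_\sharp \sigma \bigr),
$$
which is available exactly because the action is simple, so $\{\sigma, \tau_\sharp \sigma : \sigma \in F^{(i)}\}$ is a basis of $C_i(K)$. Applying $1-\tau_\sharp$ to such a $c$ gives
$$
(1-\tau_\sharp)c = \sum_{\sigma \in F^{(i)}} (n_\sigma - m_\sigma)\sigma - (n_\sigma - m_\sigma)\tau_\sharp \sigma,
$$
so $(1-\tau_\sharp)c = 0$ forces $n_\sigma = m_\sigma$, whence $c = (1+\tau_\sharp)\bigl(\sum n_\sigma \sigma\bigr)$ lies in the image of $1+\tau_\sharp$. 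The symmetric calculation applied to $(1+\tau_\sharp)c = 0$ forces $n_\sigma = -m_\sigma$ and yields $c = (1-\tau_\sharp)\bigl(\sum n_\sigma \sigma\bigr)$, which lies in the image of $1-\tau_\sharp$.

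There is no real obstacle here. The only subtlety worth commenting on is that the argument genuinely uses the hypothesis that $\tau$ has no fixed cells; if some cell were fixed, the basis $\{\sigma, \tau_\sharp \sigma\}$ would collapse and exactness could fail at that dimension. Since the preceding text assumes simplicity of the action, this is in our hands, and the lemma follows from the two one-line algebraic identities above.
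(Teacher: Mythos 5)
Your proof is correct and follows essentially the same approach as the paper: write each chain uniquely in terms of a fundamental domain (using freeness of the action), show that the kernel of $1\mp\tau_\sharp$ is contained in the image of $1\pm\tau_\sharp$ by exhibiting an explicit preimage, and get the reverse inclusion from $(1\pm\tau_\sharp)(1\mp\tau_\sharp)=0$. Your version is in fact slightly more streamlined than the paper's (which introduces auxiliary coefficients $b_j,b'_j$ before making the same choice of preimage you make directly).
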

\begin{proof}
Let $F=F^{(i)}$ be a fundamental region for the $i$-dimensional cells of $K$, and for $\sigma \in F$. Then any chain $x\in C_i(K,\Zring)$ can be written as
$$x=\sum_{\sigma_j \in F} (a_j \sigma_j + a'_j \tau_\sharp \sigma_j)$$
for some $a_j,a^\tau_j \in \Zring$. Since the action of $\Zring_2$ on $K$ is free the above presentation is unique.
If $(1+\tau_\sharp)(x)=0$, then for all $\sigma_j \in F$, $a_j+a'_j=0$.
If $(1-\tau_\sharp)(x)=0$, then for all $\sigma_j \in F$, $a_j-a'_j=0$. It follows that $a_j=a'_j$ in this case.

Now assume $x \in \ker(1+\tau_\sharp)$. Then we can find $b_j$ and $b'_j$ such that $a_j = b_j -b'_j$ and $a'_j = b'_j - b_j$. Let $y=\sum_{\sigma_j \in F} (b_j \sigma_j + b'_j \tau_\sharp \sigma_j)$.
We then have $(1-\tau_\sharp)(y)=x$. Therefore $\ker(1+\tau_\sharp) \subset (1-\tau_\sharp)(C_i(K))$. Since $(1+\tau_\sharp)(1-\tau_\sharp) = 0$, we deduce $\ker(1+\tau_\sharp) = (1-\tau_\sharp)(C_i(K))$.

Assume $x \in \ker (1-\tau_\sharp)$. Then $a_j=a'_j$ in the presentation of $x$. Set now $b_j=a_j$, $b'_j=0$ and define $y$ as before. We have then $(1+\tau_\sharp)(y)=x$. And similarly to the above it follows that $\ker(1-\tau_\sharp) = (1+\tau_\sharp)(C_i(K))$. Therefore the sequence is exact.
\end{proof}

\paragraph{}
Set $$C_i^\delta=C_i^\delta(K) = (1+\tau_\sharp)(C_i(K)) = \ker(1-\tau_{\sharp}).$$ The chains in $C_i^\delta$ are invariant chains under $\tau_\sharp$. It is easily seen that $\partial(C_i^\delta)\subset C_{i-1}^\delta$ and hence the groups $C_i^{\delta}$, $C_{-1}^i=0$, form a chain complex with the restriction of the same differential $\partial$. We call the elements of $C_i^\delta$ $i$-dimensional \textit{$\delta$-chains}. Similarly, we denote the cycles by $Z_i^\delta(K)\subset C_i^\delta(K)$ and call them $i$-dimensional \textit{$\delta$-cycles}, analogously for \textit{$\delta$-boundaries}. The homology groups of the chain complex $C^\delta(K,\tau)$ are called the \textit{special $\delta$-homology groups} of $K$ with respect to the involution $\tau$, and are written as $H_i^\delta(K) = H_i^\delta(K,\tau)$. It is not difficult to show that these homology groups are isomorphic with the usual homology of the quotient complex $K/\tau$, see \cite{Wu74}, Proposition 4 of II.2.

Analogously, we define the \textit{$s$-chain complex} by setting $$C^s_i = C^s_i(K) = (1-\tau_\sharp)(C_i(K))=\ker(1+\tau_\sharp) .$$ The chains in $C_i^s$ have the property that the sum of the coefficients of cells in any orbit of $\tau_\sharp$ is zero. Similarly to the $\delta$-chains, we define \textit{$s$-boundaries} and \textit{$s$-cycles}. The corresponding homology groups are denoted by $H_i^s(K)=H_i^s(K,\tau)$ and are called the \textit{special $s$-homology groups} of $K$ with respect to the involution $\tau$.  

\subsection{Special cohomology groups of $(K,\tau)$}
The $\delta$-cohomology groups of $(K,\tau)$, as well as the $s$-cohomology groups of $(K,\tau)$, are defined by ``dualizing" the definition of the corresponding special homology groups. Let $C^i = C^i(K,\Zring)=\text{Hom}(C_i(K),\Zring)$ be the group of $i$-dimensional cochains of $K$ with integer coefficients, and, denote the cochain complex of $K$ by $C^*(K)$. Let $\tau^\sharp$ denote the chain map induced by $\tau$ on the cochain complex $C^*(K)$. By definition $\tau^\sharp(\phi)(c)=\phi(\tau_\sharp(c))$ for any cochain $\phi$. For a fixed dimension $i$ consider the cochain complex

\begin{equation}\label{eq:sdexactco}
    \begin{split}
        \cdots \rightarrow C^i(K)\xrightarrow{1+\tau_{i}^{\sharp}} C^i(K) \xrightarrow{1-\tau_i^{\sharp}} C_i(K) \xrightarrow{1+\tau_i^{\sharp}} \cdots.
    \end{split}
\end{equation}

\begin{lemma}
The sequence (\ref{eq:sdexactco}) is exact.
\end{lemma}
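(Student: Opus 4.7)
The plan is to mirror, line for line, the argument of Lemma \ref{lemma:homexact}, now at the cochain level. The key point is that for a simple free $\Zring_2$-action a fundamental domain for the cells gives rise to a dual presentation of cochains that behaves as predictably under $\tau^\sharp$ as chains do under $\tau_\sharp$.

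First I would fix a fundamental region $F=F^{(i)}$ for the $i$-cells of $K$. Since $F\cup \tau_\sharp F$ is the whole set of $i$-cells, every integer cochain $\phi\in C^i(K)$ is uniquely determined by the two sequences of values
\[ a_\sigma := \phi(\sigma), \qquad a_\sigma^\tau := \phi(\tau_\sharp\sigma), \qquad \sigma\in F. \]
From $(\tau^\sharp\phi)(c)=\phi(\tau_\sharp c)$ one reads off $(\tau^\sharp\phi)(\sigma)=a_\sigma^\tau$ and $(\tau^\sharp\phi)(\tau_\sharp\sigma)=a_\sigma$, so that $(1+\tau^\sharp)\phi$ assumes the single value $a_\sigma+a_\sigma^\tau$ on each orbit $\{\sigma,\tau_\sharp\sigma\}$, while $(1-\tau^\sharp)\phi$ assumes the opposite values $a_\sigma-a_\sigma^\tau$ and $a_\sigma^\tau-a_\sigma$. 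Since $(\tau^\sharp)^2=\mathrm{id}$, the products $(1+\tau^\sharp)(1-\tau^\sharp)$ and $(1-\tau^\sharp)(1+\tau^\sharp)$ vanish, so every image is contained in the subsequent kernel; only the opposite containments require work.

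To close the loop, suppose $\phi\in\ker(1+\tau^\sharp)$, equivalently $a_\sigma^\tau=-a_\sigma$ for every $\sigma\in F$. Define $\psi\in C^i(K)$ by $\psi(\sigma)=a_\sigma$ and $\psi(\tau_\sharp\sigma)=0$ for $\sigma\in F$. Then $(1-\tau^\sharp)\psi$ takes the value $a_\sigma-0=a_\sigma$ on $\sigma$ and $0-a_\sigma=-a_\sigma$ on $\tau_\sharp\sigma$, which matches $\phi$. Hence $\phi\in\mathrm{im}(1-\tau^\sharp)$. The symmetric argument, with the hypothesis $a_\sigma^\tau=a_\sigma$ and the same prescription $\psi(\sigma)=a_\sigma$, $\psi(\tau_\sharp\sigma)=0$, shows that $\ker(1-\tau^\sharp)\subset\mathrm{im}(1+\tau^\sharp)$. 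This establishes exactness in each position.

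I do not anticipate any real obstacle here: the proof is formally identical to the chain-level version, which is why the lemma is stated without elaboration. Structurally, the reason both statements are true simultaneously is that the freeness and simplicity of the $\Zring_2$-action make $C_i(K)$ a free $\Zring[\Zring_2]$-module on $F$, and hence also make $C^i(K)=\mathrm{Hom}_{\Zring}(C_i(K),\Zring)$ a free $\Zring[\Zring_2]$-module on the dual basis; the exact sequence in question is then just the standard periodic free resolution of $\Zring$ over $\Zring[\Zring_2]$ applied to this free module. One could alternatively phrase the proof in those terms if a more conceptual presentation is desired.
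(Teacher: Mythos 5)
Your proof is correct and follows essentially the same route as the paper's: fix a fundamental domain, describe a cochain by its values on $F$ and on $\tau_\sharp F$, note that the composites vanish because $(\tau^\sharp)^2=\mathrm{id}$, and exhibit a preimage $\psi$ supported on the fundamental domain; your $\psi$ with $\psi(\sigma)=\phi(\sigma)$, $\psi(\tau_\sharp\sigma)=0$ is exactly the choice the paper makes (explicitly in the $\ker(1-\tau^\sharp)$ case, implicitly in the other). The closing remark about $C^i(K)$ being a free $\Zring[\Zring_2]$-module is a nice conceptual gloss but is not needed for the verification.
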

\begin{proof}
Let $F$ and $\sigma_j$ be as in the proof of Lemma \ref{lemma:homexact}. Let $\phi \in C^i(K)$ be a cochain. 
If $(1+\tau^\sharp)(\phi)=0$, then for all $\sigma_j \in F$, $\phi((1+\tau_\sharp)\sigma_j)=\phi(\sigma_j+\tau_\sharp \sigma_j)=0$. Thus, for all $j$, $\phi(\sigma_j)+\phi(\tau_\sharp \sigma_j)=0$.
If $(1-\tau^\sharp)(\phi)=0$, then for all $\sigma_j \in F$, $\phi(\sigma_j)=\phi(\tau_\sharp \sigma_j)$.

Now assume $\phi \in \ker(1+\tau^\sharp)$. Then it is possible to find $\psi \in C^i(K)$ such that $\psi(\sigma_j)-\psi(\tau_\sharp\sigma_j)=\phi(\sigma_j)$, and, $\psi(\tau_\sharp\sigma_j)-\psi(\sigma_j)=\phi(\tau_\sharp\sigma_j)$. Then $(1-\tau^\sharp)(\psi)=\phi$. Hence, $\ker(1+\tau^\sharp) \subset (1-\tau^\sharp)(C^i(K))$. The other inclusion follows from
$(1+\tau^\sharp)(1-\tau^\sharp)=0$.

Now take $\phi \in \ker(1-\tau^\sharp)$. Then if we define $\psi$ by $\psi(\sigma_j)=\phi(\sigma_j)$, $\psi(\tau_\sharp\sigma_j)=0$, then, $(1+\tau^\sharp)(\psi)= \psi+\tau^\sharp(\psi)=\phi$. Hence, $\ker(1-\tau^\sharp) \subset (1+\tau^\sharp)(C^i(K))$. The other inclusion follows from $(1-\tau^\sharp)(1+\tau^\sharp)=0$.
\end{proof}

\paragraph{}
Set $$C^i_\delta=C^i_\delta(K) = (1+\tau^\sharp)(C^i(K)) = \ker(1-\tau^{\sharp}).$$ The cochains in $C^i_\delta$ are invariant under $\tau_\sharp$, i.e., they assign the same value to chains in an orbit of $\tau_{\sharp}$. It is easily seen that $\delta(C^i_\delta)\subset C^{i+1}_\delta$ and hence the groups $C^i_{\delta}(K)$ form a cochain complex with the restriction of the dual of the differential, $\delta = \partial^*$. We call the elements of $C^i_\delta$ $i$-dimensional \textit{$\delta$-cochains}. Similarly, we denote the cycles by $Z^i_\delta(K)\subset C^i_\delta(K)$ and call them $i$-dimensional \textit{$\delta$-cocycles}, and analogously for \textit{$\delta$-coboundaries}. The homology groups of this cochain complex are called the \textit{special $\delta$-cohomology groups} of $K$ with respect to the involution $\tau$ and are denoted by $H^i_\delta(K) = H^i_\delta(K,\tau)$. It is not difficult to show that these cohomology groups are isomorphic with the usual cohomology of the quotient complex, see \cite{Wu74}, Proposition 4 of II.2. 

Analogously, we define the \textit{$s$-cochain complex} by setting $$C_s^i = C_s^i(K)  = (1-\tau^\sharp)(C^i(K))= \ker(1+\tau^\sharp).$$ The cochains in $C^i_s$ have the property that the sum of their values on any orbit of $\tau_\sharp$ is zero. We shall use \textit{$s$-coboundaries} and \textit{$s$-cocycles} with the obvious meaning. The corresponding cohomology groups are denoted by $H^i_s(K)=H^i_s(K,\tau)$, and are called the \textit{special $s$-cohomology groups} of $K$ with respect to the involution $\tau$.    

\paragraph{Remark} Although the $\delta$-homology or $\delta$-cohomology of $(K,\tau)$ are isomorphic with the usual homology and the cohomology of the quotient complex, $K/\tau$, respectively, the relation of $s$-homology and $s$-cohomology of $(K,\tau)$ with those of the quotient complex is more complicated. In brief, the $s$-(co)homology with coefficient group the abelian group $G$ is isomorphic with (co)homology of $K/\tau$ with respect to a certain boundary operator and with the coefficient group $(G\oplus G)^0$, consisting of all elements $g_1\oplus g_2 \in G \oplus G$ satisfying $g_1+g_2=0$, see Proposition 5 in Section II.2 of \cite{Wu74}. 

\subsection{The Smith-Richardson exact sequences}

Consider the following short exact sequence of chain complexes.

\begin{equation}
    \begin{split}
        0 \rightarrow C^\delta(K) \subset C(K) \xrightarrow{1-\tau_\sharp} C^s(K)\rightarrow 0
    \end{split}
\end{equation}
The standard long exact sequence defined by this short exact sequence of chain complexes is as follows, where $\iota$ denotes the inclusion map.

\begin{equation}\label{e:dsrexact}
    \begin{split}
\cdots \rightarrow H_{i}^\delta(K)&\xrightarrow{\iota_*} H_{i}(K) \xrightarrow{(1-\tau)_*}\\& H^s_{i}(K) \xrightarrow{\mu^s_{i}} H^\delta_{i-1}(K) 
\xrightarrow{\iota_*} H_{i-1}(K) \rightarrow \\& \cdots \rightarrow H_1^s(K) \xrightarrow{\mu^s_1} H_0^\delta(K) \xrightarrow{\iota_*} H_0(K) \xrightarrow{(1-\tau)_*} H_0^s(K) \rightarrow 0.
\end{split}
\end{equation}
The homomorphisms $\mu^s_i$ are defined as follows. Let $z$ be an $i$-$s$-cycle. Then $z \in \ker(1+\tau_\sharp)=\text{im}(1-\tau_\sharp)$, hence there exists a $c$ such that $(1-\tau_\sharp)c=z$. Then $(1-\tau_\sharp) \partial c=0$, so that $\partial c \in Z^\delta_{i-1}(K)$, and,  
$\mu^s_i([z])=[\partial c]$. The standard theory of long exact sequences shows that $[\partial c]$ is independent of the choices of $c$ and $z$. 

\paragraph{}
We can define another short exact sequence of chain complexes:
\begin{equation}
    \begin{split}
        0 \rightarrow C^s(K) \subset C(K) \xrightarrow{1+\tau_\sharp} C^\delta(K)\rightarrow 0.
    \end{split}
\end{equation}
Then the long exact sequence of homology groups for this short exact sequence of chain complexes is
\begin{equation}\label{e:ssrexact}
    \begin{split}
\cdots \rightarrow H_{i}^s(K)&\xrightarrow{\iota_*} H_{i}(K) \xrightarrow{(1+\tau)_*} \\&H^\delta_{i}(K) \xrightarrow{\mu^\delta_{i}} H^s_{i-1}(K) 
\xrightarrow{\iota_*} H_{i-1}(K) \rightarrow\\& \cdots \rightarrow H_1^\delta(K) \xrightarrow{\mu^\delta_1} H_0^s(K) \xrightarrow{\iota_*} H_0(K) \xrightarrow{(1+\tau)_*} H_0^\delta(K) \rightarrow 0.
\end{split}
\end{equation}

The homomorphisms $\mu^\delta_i$ have analogous definition to the $\mu^s_i$.

The sequences (\ref{e:ssrexact}) and (\ref{e:dsrexact}) are called the \textit{Smith-Richardson exact homology sequences}. 

Observe that the homomorphisms $\mu^\delta_j, \mu^s_j$ can be composed together. For $j,k$ in a suitable range, set $$\mu^\delta_{j,k}= \mu^\rho_{j-k}\cdots\mu^s_{j-1}\mu^\delta_j : H^\delta_j(K) \rightarrow H^{\rho}_{j-k}(K)$$
where $\rho$ is $s$ or $\delta$ if $k$ is odd or even, respectively. Analogously, define $\mu^s_{j,k}$.
The homomorphisms $\mu^\delta_{j,k}, \mu^s_{j,k}$ are called \textit{the Smith special homomorphisms} of $(K,\tau)$.

\paragraph{}
The above can be dualized to obtain two long exact sequences for cohomology called the \textit{Smith-Richardson cohomology sequences}, namely,
\begin{equation}\label{e:dsrcoexact}
    \begin{split}
\cdots \xleftarrow{\mu_\delta^{i+1}} H^{i}_\delta(K)&\xleftarrow{\iota^*} H^{i}(K) \xleftarrow{(1-\tau)^*}\\& H_s^{i}(K) \xleftarrow{\mu_\delta^{i}} H_\delta^{i-1}(K) 
\xleftarrow{\iota^*} H^{i-1}(K) \leftarrow \\& \cdots \leftarrow H^1_s(K) \xleftarrow{\mu_\delta^1} H^0_\delta(K) \xleftarrow{\iota^*} H^0(K) \xleftarrow{(1-\tau)^*} H^0_s(K)
\end{split}
\end{equation}
and
\begin{equation}\label{e:ssrcoexact}
    \begin{split}
\cdots \xleftarrow{\mu_s^{i+1}} H^{i}_s(K)&\xleftarrow{\iota^*} H^{i}(K) \xleftarrow{(1+\tau)^*}\\& H_\delta^{i}(K) \xleftarrow{\mu_s^{i}} H_s^{i-1}(K) 
\xleftarrow{\iota^*} H^{i-1}(K) \leftarrow \\& \cdots \leftarrow H^1_\delta(K) \xleftarrow{\mu_s^1} H^0_s(K) \xleftarrow{\iota^*} H^0(K) \xleftarrow{(1+\tau)^*} H^0_\delta(K).
\end{split}
\end{equation}

The homomorphisms $\mu_\delta^i$ are defined as follows. Let $\phi \in Z^{i-1}_\delta(K)$ be an $(i-1)$-$\delta$-cocycle. Then by definition $(1-\tau^\sharp)\phi=0$, and, there exists a $\psi \in C^{i-1}_\delta(K)$ such that $(1+\tau^\sharp)\psi=\phi$. Then $(1+\tau^\sharp)\delta \psi=0$, hence, $\delta \psi \in Z^{i}_s(K)$. We have $\mu_\delta^i([\phi]) = [\delta \psi]$. The homomorphisms $\mu_s^i$ are defined analogously.

Set $$\mu_\delta^{j,k}= \mu_\rho^{j+k}\cdots\mu_s^{j+2}\mu_\delta^{j+1} : H_\delta^j(K) \rightarrow H_{\rho}^{j+k}(K)$$
where $\rho$ is $s$ or $\delta$ if $k$ is odd or even, respectively. Analogously define $\mu_s^{j,k}$.
The homomorphisms $\mu_\delta^{j,k}, \mu_s^{j,k}$ are also called \textit{the Smith special homomorphisms} of $(K,\tau)$.

\paragraph{}
It turns out that, with current methods, the cohomology special homomorphism are far more useful for applications than the homological ones, as can be seen in the rest of this paper.

\subsection{The Smith special classes and special index}

Consider the 0-cochain $ 1 \in C^0(K)$ defined as $1(v)=1 \in \Zring$ for any vertex $v$ of the complex $K$. This is a cocycle and clearly a $\delta$-cocycle. Denote the $\delta$-cohomology class of $1$ by $\textbf{1}$. We can apply the special Smith homomorphisms to $\textbf{1}$. The $\delta$- or $s$-cohomology classes $$A^k=A^k(K,\tau):=\mu^{0,k}_\delta(\textbf{1})$$ are called the \textit{Smith special classes} of the system $(K,\tau)$. 

Let $n$ be the least integer, if it exists, such that $A^n(K,\tau)=0$. Then $n$ is called the \textit{special Smith index} of the system $(K,\tau)$ and denoted $I(K,\tau)$. If no such $n$ exists then we say that the special Smith index is infinite. 

\paragraph{}
As usual for homological properties of spaces we have the following important proposition.
\begin{proposition}\label{p:equivmap}
Let $(K,\tau)$ and $(K', \tau')$ be two $\Zring_2$-spaces and $f:K \rightarrow K'$ a $\Zring_2$-equivariant map. Then we have, for $\rho = \delta, s$ the induced homomorphisms
$$f^\rho_*: H_i^\rho(K,\tau) \rightarrow H_i^\rho(K', \tau')$$
on special homology groups, and,
$$ f^*_\rho: H_\rho^i(K',\tau') \rightarrow H^i_\rho(K,\tau)$$
on special cohomology groups. These homomorphisms commute with the corresponding homomorphisms of the Smith-Richardson sequences,
and in particular, with the special homomorphisms $\mu_i^\rho$, $\mu^i_\rho$.
\end{proposition}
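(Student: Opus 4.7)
The plan is to proceed in two stages: first constructing the induced maps on special (co)homology from equivariance, and then deducing commutativity with the Smith--Richardson sequences by a naturality argument for the long exact sequence of a short exact sequence of chain complexes.

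First I would exploit the equivariance relation $f\tau=\tau'f$ at the chain level. This gives $f_\sharp\tau_\sharp=\tau'_\sharp f_\sharp$, hence $f_\sharp$ commutes with both $1+\tau_\sharp$ and $1-\tau_\sharp$. Consequently $f_\sharp$ sends $C^\delta(K)=\mathrm{im}(1+\tau_\sharp)=\ker(1-\tau_\sharp)$ into $C^\delta(K')$, and sends $C^s(K)$ into $C^s(K')$. Since $f_\sharp$ is a chain map, its restrictions to $C^\delta$ and $C^s$ are chain maps between the corresponding special chain complexes, yielding induced homomorphisms $f_*^\rho:H^\rho_i(K,\tau)\to H^\rho_i(K',\tau')$ for $\rho=\delta,s$. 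Dualizing, the cochain map $f^\sharp$ commutes with $\tau^\sharp$ and $\tau'^\sharp$, hence with $1\pm\tau^\sharp$, and therefore restricts to cochain maps between the $\delta$- and $s$-cochain complexes, giving the cohomological version $f^*_\rho:H_\rho^i(K',\tau')\to H_\rho^i(K,\tau)$.

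For the commutativity with the Smith--Richardson sequences, the observation is that $f_\sharp$ defines a morphism of the short exact sequences of chain complexes
\begin{equation*}
0\to C^\delta(K)\hookrightarrow C(K)\xrightarrow{1-\tau_\sharp} C^s(K)\to 0
\end{equation*}
into the corresponding sequence for $K'$, with vertical maps $f_\sharp$ (restricted appropriately) at every spot. The three squares commute precisely because $f_\sharp$ is a chain map commuting with $1-\tau_\sharp$. By the standard naturality of the connecting homomorphism in the long exact sequence associated to a short exact sequence of chain complexes, the induced maps on homology commute with the connecting homomorphism, which here is exactly $\mu^s_i$. The same argument applied to the other short exact sequence yields commutativity with $\mu^\delta_i$, and commutativity with $\iota_*$ and $(1\pm\tau)_*$ is immediate from the definitions. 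Dualizing the whole diagram gives the corresponding statement for $\mu_\delta^i$, $\mu_s^i$, $\iota^*$, and $(1\pm\tau)^*$.

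Finally, commutativity with the iterated special homomorphisms $\mu^{j,k}_\rho$ and $\mu_\rho^{j,k}$ follows immediately by induction on $k$, since each is a composition of single-step $\mu$'s each of which commutes with the induced maps of $f$. The main conceptual step in the whole argument is verifying that $f_\sharp$ preserves the special subcomplexes and their duals; beyond that, everything is an instance of the naturality of long exact sequences, and there is no serious obstacle. One small point that I would be careful about is that the definition of $\mu_\delta^i$ involves choosing a preimage $\psi$ under $1+\tau^\sharp$, and independence of $f^*_\rho\circ\mu_\delta^i = \mu_\delta^i\circ f^*_\rho$ from this choice is exactly the content of the well-definedness in the naturality lemma, so no additional verification beyond citing it is required.
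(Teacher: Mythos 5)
Your proposal is correct and is precisely the argument the paper has in mind: the paper itself gives no details and simply refers the reader to ``the standard theory of the long exact sequences,'' which is exactly the naturality-of-the-connecting-homomorphism argument you spell out (after first checking that $f_\sharp$ and $f^\sharp$ commute with $1\pm\tau_\sharp$ and $1\pm\tau^\sharp$ and hence preserve the $\delta$- and $s$-subcomplexes). The only point worth flagging is that the chain-level identity $f_\sharp\tau_\sharp=\tau'_\sharp f_\sharp$ presupposes $f$ is cellular (or that one works with singular chains), a standard reduction the paper also leaves implicit.
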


We refer for the proof of the above theorem to the standard theory of the long exact sequences. 

\paragraph{}
As a consequence of Proposition \ref{p:equivmap} we have for an equivariant map $f$
\begin{equation}\label{e:functor}
f^*_\rho(A^k(K',\tau'))= A^k(K,\tau)
\end{equation}
where $\rho=s$ if $k$ is odd and $\rho=\delta$ if $k$ is even. It follows that when such a map $f$ exists then 
$$ I(K,\tau) \leq I(K',\tau').$$

\paragraph{Computing the special Smith index}
Here we note that the special Smith index of a simplicial $\Zring_2$-complex $(K,\tau)$ can be computed in polynomial time. Observe that we can compute the bases for the spaces $C^i_s(K)$, $C^i_\delta(K)$ as kernels of two linear homomorphism in polynomial time. To compute the classes $A^n(K,\tau)$ and the special Smith index, we start with the cocycle $1$ and check if $[1]\neq 0$. If so, we compute $\mu^1_\delta(1)$. This can be done by first finding a $\psi$ such that $(1+\tau^\sharp) \psi=\phi$, $\phi=1$. This is a system of linear equations over the integers and the cochain $\psi$ can be computed in polynomial time \cite{KB79,CC82,Ili89,Sto96}. Next compute $\delta \psi$. We need to check if $[\delta \psi] = 0$ in  $H^2_s(K)$. This can be done also in polynomial time by checking if $\delta \psi$ lies in $\delta(C^{1}_s)$. we continue by replacing $\phi$ with $\delta \psi$, and $\delta$ with $s$. We stop when $[\delta \psi] = 0$.

Here we remark that for the application of the (special) Smith classes to embeddability problems there exist more efficient methods of computation, see Section \ref{s:embeddingclasses}.

\subsection{(Co)Homology of the quotient complex}
Let $\pi:K \rightarrow K/\tau$ be the projection map, see Section \ref{s:basic}. We denote by $[\sigma]$ the cell of $K/\tau$ that is the image of cells $\sigma$ and $\tau \sigma$. The projection defines two chain maps and two cochain maps as follows. The first chain and cochain maps are the ordinary ones defined by $\pi$. The homomorphism $$\pi_\sharp: C(K) \rightarrow C(K/\tau)$$ is defined as follows. Take a fundamental domain $F$ of $(K,\tau)$ and for $\sigma_j \in F$, define $\pi_\sharp(\sigma_j)=\pi_\sharp(\tau_\sharp \sigma_j)=[\sigma_j]$. Moreover,  $$\pi^\sharp: C^*(K/\tau) \rightarrow C^* (K)$$ is the dual of $\pi_\sharp$. If $\phi=\pi^\sharp(\psi)$, then $\phi$ assigns the same value $\psi([\sigma])$ to $\sigma$ and $\tau_\sharp\sigma$, and hence, $(1-\tau^\sharp)(\phi)=0$ and $\phi$ is a $\delta$-cochain. Thus $\pi^\sharp(C^*(K/\tau))\subset C_\delta^*(K)$. On the other hand, $\pi^\sharp$ maps onto $C^*_\delta(K)$, and is an isomorphism.

The second pair of chain and cochain maps is defined in terms of the inverse projection; the so called transfer homomorphisms, see \cite{Hat02} Section 3.G.  The chain map $$\bar{\pi}_\sharp: C(K/\tau)\rightarrow C^\delta(K)$$ is defined by ${\bar\pi}_\sharp([\sigma]):=\sigma+\tau_\sharp \sigma=(1+\tau_\sharp)\sigma$, for a generator $[\sigma]$ of $C(K/\tau)$. It can be seen that this map is also an isomorphism onto $C^\delta(K)$, see Proposition 4 of Section II.2 of \cite{Wu74} \footnote{Notice the differences between here and Proposition2 of \cite{Wu74}. It seems that in Proposition4 of Section II.2 and Proposition 12 of Section II.3 by $\bar{\pi}^\sharp_{(d)}$ is meant $(\pi^\sharp)^{-1}$ rather than the dual of $\bar{\pi}_\sharp$. Otherwise, we would have for instance in Proposition 12 of II.3, $\bar{\pi}^\sharp 1_K = 2 \cdot 1_{K/\tau}$.}. In brief,

\begin{equation}
\begin{split}
    &{\bar{\pi}}_\sharp: C(K/\tau) \cong C^\delta(K)\\
    &{\pi}^\sharp: C^*(K/\tau) \cong C^*_\delta(K). 
    \end{split}
\end{equation}

The above implies,
\begin{equation}
\begin{split}
    &{\bar{\pi}}_*: H(K/\tau) \cong H^\delta(K)\\
    &{\pi}^*: H^*(K/\tau) \cong H^*_\delta(K). 
    \end{split}
\end{equation}

The second cochain map is the dual map to $\bar{\pi}_\sharp$, denoted $\bar{\pi}^\sharp$. In more detail, $$\bar{\pi}^\sharp: C^*(K) \rightarrow C^*(K/\tau)$$ is defined by $$\bar{\pi}^\sharp(\phi)([\sigma])=(1+\tau^\sharp)(\phi)(\sigma).$$ We can write more compactly $$\bar{\pi}^\sharp = (\pi^\sharp)^{-1}(1+\tau^\sharp).$$ 

Note that the homomorphism $({\pi^\sharp})^{-1}: C^*_\delta(K) \rightarrow C^*(K/\tau)$ is defined by $${(\pi^\sharp)}^{-1}(\phi)([\sigma]) = \phi(\sigma)=\phi(\tau_\sharp \sigma).$$

These four homomorphisms satisfy the following important identities.

\begin{equation}
\begin{split}
    &\bar{\pi}_\sharp \pi_\sharp = (1+\tau_\sharp), \; \pi_\sharp \bar{\pi}_\sharp = 2\cdot 1,\\
    &\pi^\sharp \bar{\pi}^\sharp = (1+\tau^\sharp), \; \bar{\pi}^\sharp \pi^\sharp = 2 \cdot 1,
    \end{split}
\end{equation}
where in the above $1$ denotes the corresponding identity map. The map $\bar{\pi}^\sharp$ is used to define reduced Smith classes and index.

\subsection{The Smith classes and index}
We study here the (reduced) Smith classes and index. It turns out that the obstructions to embeddability of simplicial complexes into Euclidean spaces are in fact certain Smith classes. Refer to \cite{Wu74} for complete details.

As was observed, the special $\delta$-(co)homology of a $\Zring_2$-space is isomorphic with the ordinary (co)homology of the quotient complex. Such a description of special $s$-(co)homology is not possible. However, we can, to some extent, recover the $s$-cohomology of $(K,\tau)$ if we use the coefficient group $\Zring_2$ for the quotient complex. This leads to reduced Smith classes that have many useful properties and are defined in terms of the ordinary cohomology of the quotient complex $K/\tau$ over the coefficient groups $\Zring$ and $\Zring_2$. This will then allow us to use tools and results available in the ordinary homology theory in our arguments for Smith classes.

\paragraph{}
Let $r_2: C^*(K/\tau) \rightarrow C^*(K/\tau,\Zring_2)$ be the reduction mod 2 of cochains. We start by defining a homomorphism $$p: H^i_s(K) \rightarrow H^i(K/\tau, \Zring_2)$$ as follows. Let $\phi \in Z^i_s(K)$ and take $\psi \in Z^{i}(K)$ such that $(1-\tau^\sharp)\psi=\phi$. Set $$p([\phi])=[r_2 \bar{\pi}^\sharp(\psi)].$$ We claim that $p([\phi])$ does not depend on the choices of $\phi$ and $\psi$. 

Assume $\phi_1,\psi_1$ satisfy $(1-\tau^\sharp)\psi_1=\phi_1$ and $\phi_1=\phi+\delta \theta$ for an $(i-1)$-$s$-cochain $\theta$. 
Then, calculating mod 2, $r_2(\bar{\pi}^\sharp(\psi_1))-r_2(\bar{\pi}^\sharp(\psi))=r_2(\bar{\pi}^\sharp(\psi_1-\psi))=
r_2((\pi^\sharp)^{-1}(1+\tau^\sharp)(\psi_1-\psi))= r_2((\pi^\sharp)^{-1}(1-\tau^\sharp)(\psi_1-\psi))= r_2((\pi^\sharp)^{-1}\delta \theta)=\delta(r_2 (\pi^\sharp)^{-1} \theta ).$ The claim is proved.

\paragraph{}
We are now ready to define the reduced Smith classes. Take $1 \in C^0(K/\tau)$. Note that the homomorphism $\bar{\pi}^\sharp$ is surjective. Take $\phi_0 \in C^0(K)$ with $\bar{\pi}^\sharp(\phi_0)=1$. In words, $\phi_0$ assigns 1 to only one vertex in any orbit of $\tau$, hence, is 1 on a fundamental domain of the 0-simplices. We have $(1+\tau^\sharp)(\delta \phi_0)=\delta (1+\tau^\sharp) \phi_0=\delta \pi^\sharp \bar{\pi}^\sharp \phi_0= \delta \pi^\sharp 1=\pi^\sharp \delta 1=0$. Therefore, $\delta \phi_0$ is an $s$-cocycle. Hence, there exists a cocycle $\phi_1$ such that $$(1-\tau^\sharp)\phi_1=\delta \phi_0.$$ We have $(1-\tau^\sharp) \delta \phi_1 =  \delta (1-\tau^\sharp) \phi_1= \delta \delta \phi_0=0$. Hence, $\delta \phi_1$ is a $\delta$-cocycle and there exists a $\phi_2$ such that $$(1+\tau^\sharp) \phi_2= \delta \phi_1.$$ Continuing in this manner, we obtain a sequence $\phi_0, \phi_1, \phi_2, \ldots$, such that, $\phi_i \in C^i(K)$, and the $\phi_i$ satisfy $(1+(-1)^i\tau^\sharp) \phi_i = \delta \phi_{i-1},$ and, $\bar{\pi}^\sharp(\phi_0)=1$. This sequence is called a \textit{resolution} for the cocycle $1 \in C^0(K/\tau)$. Now define the \textit{(reduced) Smith classes} by $$A^k_r(K, \tau)=[\bar{\pi}^\sharp(\phi_k)]\in H^k(K/\tau)$$ for $k$ even, and, for $k$ odd define $$A^k_r = p([\phi_k])= [r_2 (\bar{\pi}^\sharp(\phi_k))] \in H^k(K/\tau, \Zring_2).$$

\begin{lemma}
The classes $A^k_r(K,\tau)$ do not depend on the chosen resolution of $1 \in C^0(K/\tau)$.
\end{lemma}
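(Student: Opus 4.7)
The plan is to take two resolutions $\{\phi_i\}, \{\phi_i'\}$ of $1 \in C^0(K/\tau)$, set $\eta_i = \phi_i - \phi_i'$, and show that $\bar{\pi}^\sharp(\eta_k)$ represents $0$ in $H^k(K/\tau)$ when $k$ is even, and that $r_2 \bar{\pi}^\sharp(\eta_k)$ represents $0$ in $H^k(K/\tau, \Zring_2)$ when $k$ is odd. The $\eta_i$ inherit $(1+(-1)^i \tau^\sharp)\eta_i = \delta \eta_{i-1}$ from the defining equations of a resolution, and $\bar{\pi}^\sharp \eta_0 = 0$.

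The central step is to construct inductively cochains $\theta_i \in C^i(K)$, with $\theta_{-1} := 0$, such that
\[
\eta_i = \delta\theta_{i-1} + (1 - (-1)^i \tau^\sharp)\theta_i.
\]
For $i=0$, since $(\pi^\sharp)^{-1}$ is injective on its domain $C^*_\delta(K)$, the equation $\bar{\pi}^\sharp \eta_0 = 0$ forces $(1+\tau^\sharp)\eta_0 = 0$, and the exactness of the cochain sequence (dual version of Lemma~\ref{lemma:homexact}) produces $\theta_0$ with $\eta_0 = (1-\tau^\sharp)\theta_0$. For $i \geq 1$, assuming the decomposition at level $i-1$, I compute
\[
\delta \eta_{i-1} = \delta(1-(-1)^{i-1}\tau^\sharp)\theta_{i-1} = (1+(-1)^i \tau^\sharp)\delta\theta_{i-1},
\]
so $\eta_i - \delta\theta_{i-1}$ lies in $\ker(1+(-1)^i \tau^\sharp) = \mathrm{im}(1-(-1)^i \tau^\sharp)$, which by exactness yields the required $\theta_i$.

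With the decomposition in hand, I apply $\bar{\pi}^\sharp = (\pi^\sharp)^{-1}(1+\tau^\sharp)$ term by term. For $i$ even, the summand $(1-\tau^\sharp)\theta_i$ is killed by $(1+\tau^\sharp)$, so $\bar{\pi}^\sharp \eta_i = (\pi^\sharp)^{-1}\delta(1+\tau^\sharp)\theta_{i-1} = \delta \bar{\pi}^\sharp \theta_{i-1}$, a coboundary over $\Zring$. For $i$ odd, the summand $(1+\tau^\sharp)\theta_i$ contributes $(1+\tau^\sharp)^2\theta_i = 2(1+\tau^\sharp)\theta_i$, which vanishes after reduction mod~$2$; hence $r_2\bar{\pi}^\sharp \eta_i = \delta\, r_2\bar{\pi}^\sharp \theta_{i-1}$ is a mod-$2$ coboundary.

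I do not anticipate a serious obstacle: the argument is bookkeeping around the exact sequences of Section~\ref{s:smith}, together with the identities $\pi^\sharp \bar{\pi}^\sharp = 1+\tau^\sharp$ and $(1\pm\tau^\sharp)^2 = 2(1\pm\tau^\sharp)$. The only care needed is to verify at each step that the argument of $(\pi^\sharp)^{-1}$ actually lies in $C^*_\delta(K)$, which is automatic from $(1-\tau^\sharp)(1+\tau^\sharp) = 0$. The parity split between integer coefficients (even $k$) and $\Zring_2$ coefficients (odd $k$) is precisely what makes the construction land in the correct cohomology group at each level.
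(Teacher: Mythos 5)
Your proof is correct, and it takes a genuinely different route from the paper. The paper argues by direct calculation: for $i$ odd it manipulates $r_2\bar{\pi}^\sharp(\phi_i-\psi_i)$, using $\bar{\pi}^\sharp=(\pi^\sharp)^{-1}(1+\tau^\sharp)$ and the resolution identity, to exhibit it as $\delta\bigl(r_2(\pi^\sharp)^{-1}(\phi_{i-1}-\psi_{i-1})\bigr)$ (and similarly, more simply, for $i$ even). You instead build an explicit chain-homotopy-like sequence $\theta_i$ with $\eta_i=\delta\theta_{i-1}+(1-(-1)^i\tau^\sharp)\theta_i$, by induction using exactness of the $(1\pm\tau^\sharp)$-sequence, and only then apply $\bar{\pi}^\sharp$ and read off the coboundary. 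Your version buys something concrete: at every place you apply $(\pi^\sharp)^{-1}$ (or $\bar{\pi}^\sharp$), the argument is actually $(1+\tau^\sharp)$ of something, hence genuinely lies in $C^*_\delta(K)$. The paper's final expression $(\pi^\sharp)^{-1}(\phi_{i-1}-\psi_{i-1})$ is looser on this point, since $\phi_{i-1}-\psi_{i-1}$ is a priori just a cochain and not a $\delta$-cochain even mod $2$; the intended meaning is recoverable but the step is imprecise as written. Your inductive decomposition sidesteps this entirely, uses exactly the exactness lemma and the identities $\pi^\sharp\bar{\pi}^\sharp=1+\tau^\sharp$, $(1+\tau^\sharp)(1-\tau^\sharp)=0$, $(1+\tau^\sharp)^2=2(1+\tau^\sharp)$, and handles the integer/even and mod-$2$/odd cases in parallel. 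One small stylistic note: since the resolution identity is only asserted for $i\geq 1$, the base case $i=0$ must be handled separately, which you correctly do via $\bar{\pi}^\sharp\eta_0=0$ and injectivity of $(\pi^\sharp)^{-1}$.
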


\begin{proof}
Let $\psi_0, \psi_1, \psi_2, \ldots $ be another resolution of the cocycle $1 \in C^0(K/\tau)$. Therefore, the $\psi_i$ satisfy
$\bar{\pi}^\sharp \psi_0=1$ and $(1+(-1)^i(\tau^\sharp)) \psi_i=\delta \psi_{i-1}$ for $i\geq 1$. Assume $i$ odd. Then $r_2(\bar{\pi}^\sharp(\phi_i))-r_2(\bar{\pi}^\sharp(\psi_i))= r_2(\bar{\pi}^\sharp(\phi_i-\psi_i))= r_2(({\pi^\sharp})^{-1}(1+\tau^\sharp) (\phi_i-\psi_i)) = r_2((\pi^{\sharp})^{-1} (1+\tau^\sharp) (\phi_i - \psi_i)- 2(\pi^\sharp)^{-1}\tau^\sharp(\phi_i-\psi_i))= r_2((\pi^\sharp)^{-1}(1-\tau^\sharp) (\phi_i-\psi_i)) = r_2((\pi^\sharp)^{-1}(\delta(\phi_{i-1}-\psi_{i-1})))= \delta(r_2((\pi^\sharp)^{-1}(\phi_{i-1}-\psi_{i-1})))$, therefore $\bar{\pi}^\sharp(\phi_i)$ and $\bar{\pi}^\sharp(\psi_i)$ belong to the same cohomology class mod 2. A simpler calculation shows that in the case $i$ even the cochains $\bar{\pi}^\sharp(\phi_i)$ and $\bar{\pi}^\sharp(\psi_i)$ belong to the same integer class.

\end{proof}

\begin{proposition}
For the classes $A^n_r(K,\tau)$ and special classes $A^n(K,\tau)$ we have, for $n$ even
$$A_r^n(K,\tau)= (\pi^*)^{-1} A^n(K,\tau),$$ and for $n$ odd, $$A_r^n(K,\tau) = r_2^* (\pi^*)^{-1} A^n(K, \tau).$$
\end{proposition}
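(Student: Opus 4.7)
The plan is to express both $A^n$ and $A^n_r$ via one and the same resolution $\phi_0,\phi_1,\phi_2,\ldots$ of the cocycle $1\in C^0(K/\tau)$, and then read off the comparison from the identity $\pi^\sharp\bar{\pi}^\sharp = 1+\tau^\sharp$. First I would note that the starting points of the two definitions agree: since $\bar{\pi}^\sharp \phi_0 = 1$, we have $(1+\tau^\sharp)\phi_0 = \pi^\sharp\bar{\pi}^\sharp\phi_0 = \pi^\sharp(1) = 1_K$, so $\phi_0$ is exactly the cochain needed to apply the first Smith special homomorphism $\mu^1_\delta$ to the class $\mathbf{1}\in H^0_\delta(K)$ represented by $1_K$.

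The key lemma is then a short induction on $k\geq 0$ showing that $A^k = \mu^{0,k}_\delta(\mathbf{1})$ is represented by the cocycle $(1+(-1)^k\tau^\sharp)\phi_k$ (equivalently, by $\delta\phi_{k-1}$ for $k\geq 1$) in the appropriate special cohomology group ($\delta$ for $k$ even, $s$ for $k$ odd). The inductive step is immediate: assuming $A^k$ is represented by $(1+(-1)^k\tau^\sharp)\phi_k$, the definition of the next special homomorphism asks for a cochain $\chi$ with $(1+(-1)^k\tau^\sharp)\chi$ equal to this representative, and $\chi=\phi_k$ is the obvious choice; the resolution relation then gives $\delta \chi = \delta\phi_k = (1+(-1)^{k+1}\tau^\sharp)\phi_{k+1}$, which represents $A^{k+1}$.

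With this representation in hand, the two cases of the proposition fall out. For $n$ even, $(1+\tau^\sharp)\phi_n = \pi^\sharp(\bar{\pi}^\sharp\phi_n)$, so passing to cohomology gives $A^n = \pi^*[\bar{\pi}^\sharp\phi_n] = \pi^*(A_r^n)$, which inverts to $A^n_r = (\pi^*)^{-1}A^n$. For $n$ odd, the representative $(1-\tau^\sharp)\phi_n\in C^n_s(K)$ is congruent mod $2$ to $(1+\tau^\sharp)\phi_n = \pi^\sharp(\bar{\pi}^\sharp\phi_n)$; reducing mod $2$ and applying $(\pi^\sharp)^{-1}$ (an isomorphism $C^n_\delta(K,\Zring_2)\to C^n(K/\tau,\Zring_2)$, meaningful here because mod $2$ we have $C^n_s(K,\Zring_2) = C^n_\delta(K,\Zring_2)$) recovers $r_2\,\bar{\pi}^\sharp\phi_n$, a representative of $A_r^n$.

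The one point requiring care is the odd case, namely making sense of the composition $r_2^*(\pi^*)^{-1}$ when $A^n$ lives in $s$-cohomology while $\pi^*$ is the canonical isomorphism $H^*(K/\tau)\cong H^*_\delta(K)$. This is resolved by the observation that modulo $2$ the distinction between $s$- and $\delta$-cochains collapses, so that after $r_2^*$ the inverse of $\pi^*$ is unambiguously applicable; once this identification is made, the calculation reduces to the formal manipulations described above.
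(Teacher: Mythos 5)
Your proposal is correct and follows essentially the same route as the paper: express both families of classes through a single resolution $\phi_0,\phi_1,\ldots$ of $1\in C^0(K/\tau)$, observe that $A^k$ is represented by $\delta\phi_{k-1}=(1+(-1)^k\tau^\sharp)\phi_k$, and compare via $\bar{\pi}^\sharp=(\pi^\sharp)^{-1}(1+\tau^\sharp)$ together with the mod-$2$ identification of $s$- and $\delta$-cochains in the odd case. The only difference is that you make explicit the induction that the paper compresses into its computation for $n=0,1$ followed by ``similarly for any $n$,'' and you flag the interpretation of $(\pi^*)^{-1}$ on $s$-classes, which the paper handles by the same mod-$2$ collapse.
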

\begin{proof}
Let $\phi_0, \phi_1, \ldots$ be a resolution for $1 \in C^0(K/\tau)$. Then by definition, $\phi_0$ is a cochain such that $\bar{\pi}^\sharp \phi_0=1$ and  $(1+(-1)^i(\tau^\sharp)) \phi_i=\delta \phi_{i-1}$ for $i\geq 1$. We have $1= (1+\tau^\sharp)\phi_0$ and $(\pi^\sharp)^{-1} (1+\tau^\sharp) \phi_0 = \bar{\pi}^\sharp \phi_0 = 1.$ Hence, the claim is true for $n=0$. Next consider $n=1$. By definition $A^1=\mu^0([1])=[\delta \phi_0]$. On the other hand, 

\begin{equation*}
\begin{split}
A^1_r &= p([\phi_1]) = [r_2(\bar{\pi}^\sharp \phi_1 )]\\&=[r_2 (\pi^\sharp)^{-1} (1+\tau^\sharp)\phi_1]=[r_2 (\pi^\sharp)^{-1}(1-\tau^\sharp)\phi_1]=[r_2(\pi^\sharp)^{-1}(\delta \phi_0)]\\& =r_2^*(\pi^*)^{-1}[\delta \phi_0] = r_2^*(\pi^*)^{-1}A^1.
\end{split}
\end{equation*}
Hence, the claim is true for $n=1$. And similarly for any $n$.
\end{proof}

\paragraph{Remark} For many interesting and useful properties of the reduced classes see \cite{Wu74} Chapter II. Here we just observe that, by definition, the reduced classes are just isomorphic images of the non-reduced classes in even dimensions, in particular, for the van Kampen obstruction. From this (and the relation of cochains in a resolution) follows that the reduced index is at most one smaller than the non-reduced index.

\subsection{Example}\label{s:example}
We present in this section an important example of computing the Smith classed and index. This example is essentially taken from \cite{Wu74}. The interested reader should contact Chapter II of this book for more index computations.

Let $S^n$, $n\geq 0$, denote the unit sphere in $\Rspace^{n+1}$. On this sphere the antipodal map $\tau$ acts by negation, $\tau(x)=-x$. Take a cell decomposition of $S^n$ which is invariant under the action of $\tau$, that is, $\tau$ acts by a map that takes cells to cells of the same dimension. We can moreover assume that there are only two cells $\sigma^i, \sigma'^i$ of any dimension $i\geq 0$. The quotient space of this action is famously the real projective space $\Rspace P^n$. 

We now oriented the cells as follows. We assume that the vertex $\sigma^0$ is positive. Inductively, let $\sigma^i$ be the chain of dimension $i$ defined by an arbitrary $i$-cell but with an orientation that induces the same orientation on $\sigma^{i-1}$ as the orientation of $\sigma^{i-1}$. This gives a set of coherently oriented cells $\sigma^i$. Now assign orientations to the $\sigma'^i$ such that $\tau_\sharp \sigma^i = \sigma'^i$. Since $\tau$ reverses the orientation of every direction, $\tau_\sharp$ reverses the orientation of a cell when $i$ is odd. It follows then that
$$S^i = \sigma^i + (-1)^{i+1}\sigma'^i,$$
is an oriented $i$-sphere, whose orientation is induced from $\sigma^{i+1}$. We can deduce $$\partial \sigma^i = (-1)^{i} \partial \sigma'^{i} = \sigma^{i-1} + (-1)^{i} \sigma'^{i-1}.$$

Let $\gamma^i$ denote the cell of $S^n/\tau$ which is the projection of $\sigma^i, \sigma'^i$. Then, $\partial \gamma^i = 0$ if $i$ is odd and $\partial \gamma^i=2 \gamma^{i-1}$ if $i$ is even. Let $\gamma_i$ denote the $i$-cochain that assigns the value $1$ to $\gamma^i$. It is easily seen that $\delta \gamma_i=0 \mod{2}$ and $H^i(S^n/\tau, \Zring_2)=\Zring_2$ and is generated by $\gamma_i$. For $i$-even, we also have $H^i(S^n/\tau,\Zring)=\Zring_2$ and is generated by $U^i$ that assigns the value 1 on the $i$-dimensional cell $\gamma^i$, and for $i$ odd, there are no non-trivial cycles of dimension $i$ (see below). Note that here the generator of $H^i(S^n/\tau, \Zring_2)$ is $r_2U^i$ for $i$ even.

In order to compute the classes $A^i_r$ we need to find a resolution of $1 \in C^0(S^n/\tau)$. Let the cochain $\sigma_i \in C^i(S^n)$ be defined by $\sigma_i(\sigma^i)=1, \sigma_i(\sigma'^i)=0$, $0 \leq i \leq n$.  That is, the $\sigma_i$ are 1 on a fundamental domain. We have $\bar{\pi}^\sharp(\sigma_0)=1$. Moreover, $\delta(\sigma_i) (\sigma^{i+1})= \sigma_i \partial(\sigma^{i+1})=\sigma_i(\sigma^i+(-1)^{i+1}\sigma'^i) = 1$, and, $$\delta(\sigma_i)(\sigma'^{i+1}) =  \sigma_i \partial (\sigma'^{i+1})=(-1)^{i+1} \sigma_i\partial(\sigma^{i+1})=(-1)^{i+1}.$$ 
Therefore, $$\delta(\sigma_i) = \sigma_{i+1}+(-1)^{i+1}\tau^\sharp \sigma_{i+1}.$$ Consequently, the $\sigma_i$ constitute, by definition, a resolution of $1 \in C^0(S/\tau)$. Note that $\delta(\sigma_i)$ is a $\delta$- or $s$-ococycle that is a coboundary. It follows that for $i$ even $$A^i_r(S^n,\tau) = [\bar{\pi}^\sharp(\sigma_i)]=[\gamma_i],$$ and for $i$ odd, $$A^i_r(S^n,\tau) = [r_2 \bar{\pi}^\sharp(\sigma_i)]=[r_2\gamma_i].$$ 
It follows that $I_r(S^n,\tau)= n+1$.

Note that $1 \in C^0(S^n)=(1+\tau^\sharp)\sigma_0$. By definition and the above, $$A^i=[\delta \sigma_{i-1}]= [(1+(-1)^{i}\tau^\sharp)\sigma_{i}].$$ Using the formula for $\partial \sigma_i$, an easy calculation shows that the cocycle $(1+(-1)^{i}\tau^\sharp)\sigma_{i}$ cannot be a coboundary in its corresponding special cohomology group. It follows that, $A^i(S^n,\tau)\neq 0$ if and only if $0 \leq i \leq n$, and, $I(S^n, \tau)=n+1.$

\subsection{The Smith classes of the deleted products}
We study the Smith classes of deleted products of simplicial complexes, with the action of $\Zring_2$ on them which exchanges the factors. They have been defined by Wu \cite{Wu74} as an obstruction to embedding a simplicial complex into a Euclidean space. It turns out that the van Kampen obstruction is a special case of these classes. The van Kampen obstruction has a more natural definition and is the subject of the next section.

Let $K$ be a simplicial complex and let $\Delta(K)$ denote its deleted product. The complex $\Delta(K)$ plays the role of the $\Zring_2$-complex $K$ of the previous sections. The space $\Delta(K)$ can be considered as a polyhedral or cellular complex, where each cell is a product of two simplicies. Let $\tau$ be the cellular involution that sends $\sigma_1 \times \sigma_2 \in \Delta(K)$ to $\sigma_2 \times \sigma_1$. This map permutes the cells. With this action of $\Zring_2$ on $\Delta(K)$, we can define, for each $n \geq 0$, the special Smith classes $A^n(\Delta(K), \tau)$ and the (reduced) Smith classes $A^n_r(\Delta(K), \tau)$. We write $\Phi^n(K)=A^n(\Delta(K),\tau)$, and analogously we define the $\Phi^n_r(K)$.

The following theorem, easily proved, shows that these classes can serve as obstructions to embeddability.
\begin{Theorem}
Let $f: K \rightarrow L$ be an embedding. Then, for each $n\geq 0$, if $\Phi^n(L)=0$ then $\Phi^n(K)=0$. Similarly, if $\Phi^n_r(L)=0$ then $\Phi^n_r(K)=0$. 
\end{Theorem}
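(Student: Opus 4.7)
The plan is to reduce the theorem to the functoriality of the Smith classes under $\mathbb{Z}_2$-equivariant maps, which is exactly the content of Proposition \ref{p:equivmap} and the consequence \eqref{e:functor}. The only nontrivial point is manufacturing a cellular equivariant map between the deleted products out of the given embedding.

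First I would show that an embedding $f: K \to L$ induces an equivariant map $\tilde f: \Delta(K) \to \Delta(L)$ on deleted products. After a simplicial subdivision of $K$ (and, if necessary, of $L$) we may assume $f$ is a simplicial map. Since $f$ is an embedding, in particular injective on vertices, it sends any pair of vertex-disjoint simplices of $K$ to a pair of vertex-disjoint simplices of $L$. Defining $\tilde f(\sigma_1 \times \sigma_2) = f(\sigma_1) \times f(\sigma_2)$ therefore lands inside $\Delta(L)$, giving a cellular map $\tilde f : \Delta(K) \to \Delta(L)$. Equivariance is immediate from
\[
\tilde f(\tau(\sigma_1\times\sigma_2)) = \tilde f(\sigma_2\times\sigma_1)
= f(\sigma_2)\times f(\sigma_1) = \tau \tilde f(\sigma_1\times\sigma_2).
\]

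Next I would invoke Proposition \ref{p:equivmap} applied to $\tilde f$, which supplies, for $\rho\in\{\delta,s\}$, induced homomorphisms $\tilde f^{\,*}_\rho: H^n_\rho(\Delta(L),\tau)\to H^n_\rho(\Delta(K),\tau)$ that commute with the Smith special homomorphisms $\mu^i_\rho$. Applied to the class $\mathbf{1}$ (which is natural under any equivariant map since it is the class of the constant $0$-cochain $1$), this naturality gives, via \eqref{e:functor},
\[
\tilde f^{\,*}_\rho\bigl(\Phi^n(L)\bigr)
= \tilde f^{\,*}_\rho\bigl(A^n(\Delta(L),\tau)\bigr)
= A^n(\Delta(K),\tau) = \Phi^n(K).
\]
Hence $\Phi^n(L)=0$ forces $\Phi^n(K)=0$, which is the first claim.

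For the reduced version $\Phi^n_r$, I would combine the proposition identifying $A^n_r$ with $(\pi^*)^{-1}A^n$ (composed with $r_2^*$ in odd degree) with the same naturality: the projection $\pi$ and the mod $2$ reduction both commute with the induced map $\tilde f$, so $\tilde f$ pulls the reduced class of $L$ back to the reduced class of $K$. The main (and only) obstacle is the bookkeeping of the subdivision step, namely checking that $\Phi^n$ and $\Phi^n_r$ are unchanged when $K$ is replaced by a subdivision; this follows because $H^*_\delta(\Delta(K),\tau) \cong H^*(\Delta(K)/\tau)$ and the analogous description of $H^*_s$ via the transfer are topological invariants of the pair $(\Delta(K),\tau)$ up to equivariant homotopy equivalence, so subdivision does not alter the classes. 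Once this is dispatched, the rest of the argument is purely a functoriality statement.
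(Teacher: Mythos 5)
Your overall strategy---induce a $\Zring_2$-equivariant map $\Delta(K)\rightarrow\Delta(L)$ and then quote Proposition~\ref{p:equivmap} and equation~(\ref{e:functor})---is exactly what the paper intends by ``easily proved,'' and the functoriality part of your argument, including the treatment of the reduced classes via naturality of $\pi^*$ and $r_2^*$, is fine.

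The gap is in the subdivision step, which is where the actual content of the theorem lives. A PL embedding $f:K\rightarrow L$ is only simplicial after passing to subdivisions $K'$ of $K$ (and possibly $L'$ of $L$), and the deleted product is \emph{not} a topological construction: $\Delta(K')$ and $\Delta(K)$ are different complexes (different subspaces of $|K|\times|K|$), so it is not even clear a priori that they are equivariantly homotopy equivalent. Your justification---that the Smith classes are ``topological invariants of the pair $(\Delta(K),\tau)$ up to equivariant homotopy equivalence''---is circular: it presupposes the equivariant homotopy equivalence $\Delta(K')\simeq\Delta(K)$ that needs to be established. The missing ingredient is the classical fact that for any triangulation the simplicial deleted product is a $\Zring_2$-equivariant deformation retract of the topological deleted product $|K|\times|K|\setminus\mathrm{diagonal}$; once that is cited, both $\Delta(K)$ and $\Delta(K')$ retract from the same $\Zring_2$-space and your argument closes. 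Alternatively, you can avoid subdivision altogether: compose the inclusion $\Delta(K)\subset |K|\times|K|\setminus\mathrm{diagonal}$ with $f\times f$ (which preserves the complement of the diagonal because $f$ is injective) and then with the equivariant retraction of $|L|\times|L|\setminus\mathrm{diagonal}$ onto $\Delta(L)$; this still uses the retraction fact, but only once and without any bookkeeping about how $f$ interacts with triangulations. Either way, that retraction lemma must be stated explicitly---as written, your proof asserts rather than proves the invariance on which everything rests.
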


It is easily seen that the deleted product $\Delta(\Rspace^N)$ is equivariantly homotopic to the $(N-1)$-dimensional sphere $S^{N-1}$ and the antipodal action on it. From the calculations in \ref{s:example}, it follows that, for $n \geq 0$,

\begin{equation}
    \begin{split}
    \Phi^n(\Rspace^N)&=0\; \Leftrightarrow \; n \geq N,\\ 
    \Phi^n_r(\Rspace^N) &=0 \; \Leftrightarrow \; n \geq N.
    \end{split}
\end{equation}
Consequently, if the simplicial complex $K$ is embeddable into $\Rspace^N$, then,
\begin{equation}
    \begin{split}
    \Phi^n(K)&=0\;\; \text{for}\;\; n \geq N,\\ 
    \Phi^n_r(K) &=0 \;\; \text{for}\;\; n \geq N.
    \end{split}
\end{equation}

\section{The embedding classes and the van Kampen obstruction}\label{s:embeddingclasses}
In this section, we present an overview of the embedding classes as defined in Shapiro \cite{Sha57} and Wu \cite{Wu74} Chapter V.
Let $K$ be a $d$-dimensional simplicial complex, and $f:K \rightarrow \Rspace^m$ an arbitrary (PL) map. Assume $m\leq 2d$, otherwise $f$ can be made into an embedding by an arbitrary small perturbation. For non-negative integers $l,k$ such that $l+k=m$, let $\sigma^l, \sigma^k$ be two disjoint simplices of respective dimensions. We can consider the intersection number of two singular chains $f(\sigma^l)$ and $f(\sigma^k)$ in $\Rspace^m$ when defined. Let $\inter{(f(\sigma^l), f(\sigma^k))}$ denote the algebraic intersection number of the two chains. In general, the intersection numbers for two chains $A$ and $B$, satisfy  $$\inter{(A,B)}=(-1)^{\dim(A)\dim(B)}\inter(B,A).$$ 

\paragraph{}
For $f$ in general position, we can assume that two disjoint simplices $\sigma^l, \sigma^k$, with $l+k=m$, satisfy $f(\sigma^l) \cap f(\partial \sigma^k)=f(\partial \sigma^l) \cap f(\sigma^k)= \emptyset$. Then, the intersection numbers can be used to define an $m$-cochain $\vartheta^m_f(K)\in C^m(\Delta(K))$ by

$$ \vartheta^m_f(\sigma^l \times \sigma^k)= (-1)^l \inter{(f(\sigma^l),f(\sigma^k))}.$$ Moreover, when the orientations on cells of $\Delta(K)$ are induced from the factors, the chain map $\tau_\sharp$ satisfies

$$\tau_\sharp(\sigma\times \tau)=(-1)^{d(\sigma)d({\tau})} (\tau \times \sigma).$$ It follows from the definitions and the above identities that

$$\tau^\sharp \vartheta^m_f(K) = (-1)^m \vartheta^m_f(K).$$ Consequently, $\vartheta^m_f(K)$ is $\tau$-equivariant, or a $\delta$-cochain, if $m$ is even, and is an $s$-cochain if $m$ is odd. Let $\rho_m=\delta$ if $m$ is even and $\rho_m=s$ otherwise.
We state the following facts, for the proofs refer to \cite{Sha57}.

\begin{lemma}
The cochain $\vartheta^m_f(K) \in C^{\rho_m}(\Delta(K))$ is a cocycle.
\end{lemma}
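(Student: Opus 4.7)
The plan is to compute $\delta \vartheta^m_f$ on an arbitrary $(m+1)$-cell of $\Delta(K)$ and check that it vanishes, which reduces to the Leibniz rule for intersection numbers of chains in $\mathbb{R}^m$.

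Let $\sigma^l \times \sigma^k$ be an $(m+1)$-cell of $\Delta(K)$, so $l+k=m+1$ and the simplices $\sigma^l,\sigma^k$ are vertex-disjoint in $K$. Every face appearing in $\partial\sigma^l\times\sigma^k$ or $\sigma^l\times\partial\sigma^k$ is again a product of two vertex-disjoint simplices, so $\vartheta^m_f$ is defined on each summand. Using the product formula for $\partial$ recorded in the preliminaries,
\[
\delta\vartheta^m_f(\sigma^l\times\sigma^k)
=\vartheta^m_f(\partial\sigma^l\times\sigma^k)+(-1)^l\vartheta^m_f(\sigma^l\times\partial\sigma^k).
\]
Plugging in the defining formula $\vartheta^m_f(\alpha\times\beta)=(-1)^{\dim\alpha}\mathcal{X}(f\alpha,f\beta)$ term by term and distributing over the faces of $\partial\sigma^l$ and $\partial\sigma^k$, this rewrites (assuming $f$ has been perturbed so that the relevant pairs are in general position) as
\[
\delta\vartheta^m_f(\sigma^l\times\sigma^k)
=(-1)^{l-1}\mathcal{X}\bigl(f(\partial\sigma^l),f(\sigma^k)\bigr)
+\mathcal{X}\bigl(f(\sigma^l),f(\partial\sigma^k)\bigr).
\]

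Now I invoke the chain-level Leibniz rule for intersection numbers: when $A$ and $B$ are singular chains in $\mathbb{R}^m$ in general position with $\dim A+\dim B=m+1$, the intersection $A\cap B$ is a $1$-chain in the contractible space $\mathbb{R}^m$, and its boundary is a $0$-chain whose signed degree is zero. Bookkeeping the orientations of the boundary points gives the identity
\[
\mathcal{X}(\partial A,B)=(-1)^{\dim A}\,\mathcal{X}(A,\partial B).
\]
Applied with $A=f(\sigma^l)$ and $B=f(\sigma^k)$, this yields $\mathcal{X}(f(\partial\sigma^l),f(\sigma^k))=(-1)^l\mathcal{X}(f(\sigma^l),f(\partial\sigma^k))$, and substituting back makes the two terms in the displayed expression for $\delta\vartheta^m_f(\sigma^l\times\sigma^k)$ cancel.

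The main obstacle is the Leibniz rule itself: one must set up the orientation conventions on products and on intersections so that the boundary of the $1$-chain $f(\sigma^l)\cap f(\sigma^k)$ exactly splits into the two intersection counts with the correct relative sign. Once this is established (or imported as a standard fact about chain-level intersection products in oriented manifolds), the statement follows from the bookkeeping above, and the case analysis is uniform in the parity of $m$, producing a $\delta$-cocycle when $m$ is even and an $s$-cocycle when $m$ is odd as already forced by the equivariance identity $\tau^\sharp\vartheta^m_f(K)=(-1)^m\vartheta^m_f(K)$.
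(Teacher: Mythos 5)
The paper does not actually prove this lemma; it states it and refers the reader to Shapiro's paper for the proof. Your argument is precisely the expected one and is essentially correct: the only content is the Stokes-type boundary formula for the one-dimensional transverse intersection $f(\sigma^l)\pitchfork f(\sigma^k)$, and the sign bookkeeping with the definition $\vartheta^m_f(\alpha\times\beta)=(-1)^{\dim\alpha}\inter(f\alpha,f\beta)$ and the product boundary formula closes the computation once one has the relation $\inter(\partial A,B)=(-1)^{\dim A}\inter(A,\partial B)$.

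Two small points. First, the sign $(-1)^{\dim A}$ in your Leibniz rule is exactly what makes the two terms cancel, and it is \emph{not} fixed by consistency with the symmetry $\inter(A,B)=(-1)^{\dim A\dim B}\inter(B,A)$ alone (the alternative $\inter(\partial A,B)=(-1)^{\dim A+1}\inter(A,\partial B)$ is equally consistent with the symmetry but would break the cancellation). So a careful reader should actually fix an orientation convention for the transverse intersection chain and derive this sign, rather than cite ``the standard Leibniz rule'' in the abstract; this is where the $(-1)^l$ in the definition of $\vartheta^m_f$ earns its keep. Second, the appeal to contractibility of $\Rspace^m$ is unnecessary: the augmentation of the boundary of any finite $1$-chain vanishes in any space, by $\epsilon\circ\partial=0$; what contractibility would buy you (triviality of $H_0$, $H_1$) is not what you use.

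You should also note, as the paper does implicitly, that for the pairs $(\partial\sigma^l,\sigma^k)$ and $(\sigma^l,\partial\sigma^k)$ the intersection numbers are well defined in general position because the dimensions of the relevant boundary-against-boundary intersections are all at most $m-1$, so a generic PL perturbation makes them empty. You gesture at this with ``assuming $f$ has been perturbed,'' which is fine, but it is the hypothesis that makes each term in your displayed sum meaningful, not just the final cancellation.
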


The cocycle $\vartheta^m_f(K)$ is called the \textit{embedding cocycle} of $K$ associated with the map $f$.

\begin{lemma}
The cohomology class $[\vartheta^m_f(K)] \in H^{\rho_m}(\Delta(K))$ is independent of the map $f$.
\end{lemma}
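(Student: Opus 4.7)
The plan is to show that for any two PL maps $f_0, f_1 : K \to \Rspace^m$ in general position, the cocycles $\vartheta^m_{f_0}(K)$ and $\vartheta^m_{f_1}(K)$ differ by a coboundary $\delta \eta$, where $\eta$ lies in $C^{m-1}_{\rho_{m-1}}(\Delta(K))$. Since every PL map $f:K \to \Rspace^m$ can be brought into general position by an arbitrarily small perturbation (and the cocycle $\vartheta^m_f(K)$ is well-defined for such $f$), the cohomology class is then the same for all $f$.

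First I would connect $f_0$ and $f_1$ by a PL homotopy $F : K \times I \to \Rspace^m$ and, after a generic perturbation rel $K \times \partial I$, arrange $F$ to be in general position: for every pair of vertex-disjoint simplices $\sigma^l, \sigma^k \subset K$ with $l + k = m-1$, the singular chains $F(\sigma^l \times I)$ and $f_t(\sigma^k) = F(\sigma^k \times \{t\})$ meet transversally in finitely many interior points for each $t \in \{0,1\}$, and similarly for the boundary variants needed below. I would then define
\[
\eta(\sigma^l \times \sigma^k) \;=\; (-1)^l\, \inter\!\bigl(F(\sigma^l \times I),\, f_0(\sigma^k)\bigr), \qquad l + k = m-1,
\]
possibly (anti-)symmetrized in the two choices of endpoint $\{0,1\}$ in order to enforce the correct $\tau^\sharp$-behaviour. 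Using the identity $\inter(A,B) = (-1)^{\dim A \dim B}\inter(B,A)$ and $\tau_\sharp(\sigma \times \tau)=(-1)^{d(\sigma)d(\tau)}(\tau\times\sigma)$, a calculation parallel to the one already carried out for $\vartheta^m_f$ shows that $\eta$ satisfies $\tau^\sharp \eta = (-1)^{m-1} \eta$, hence lies in $C^{m-1}_{\rho_{m-1}}(\Delta(K))$.

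Next I would compute $\delta \eta$ on an $m$-cell $\sigma^l \times \sigma^k$ with $l+k=m$ via the Leibniz formula
\[
\delta \eta(\sigma^l \times \sigma^k) \;=\; \eta(\partial \sigma^l \times \sigma^k) + (-1)^l \eta(\sigma^l \times \partial \sigma^k).
\]
The key ingredient is the chain-level identity $\partial F(\sigma^l \times I) = f_1(\sigma^l) - f_0(\sigma^l) - F(\partial \sigma^l \times I)$ in $\Rspace^m$, combined with the intersection-number Leibniz rule $\inter(\partial A, B) = (-1)^{\dim A + 1}\inter(A, \partial B)$, valid for chains in general position with empty intersection on their boundaries. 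Substituting these into the expression for $\delta\eta(\sigma^l\times\sigma^k)$, the pieces involving $F(\partial \sigma^l \times I)$ cancel against the $(-1)^l \eta(\sigma^l \times \partial \sigma^k)$ term, leaving exactly the two endpoint intersections $\inter(f_1(\sigma^l), f_?(\sigma^k)) - \inter(f_0(\sigma^l), f_?(\sigma^k))$, which by the symmetrization of $\eta$ reduce to $\vartheta^m_{f_1}(\sigma^l \times \sigma^k) - \vartheta^m_{f_0}(\sigma^l \times \sigma^k)$.

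The main obstacle is sign bookkeeping: choosing the $(-1)^l$ (and any symmetrization constants) in the definition of $\eta$ so that simultaneously (i) $\eta$ lands in the correct $\delta$- or $s$-type subgroup $C^{m-1}_{\rho_{m-1}}(\Delta(K))$ and (ii) the signs from $\partial(\sigma\times\tau)$, from $\inter(A,B)=(-1)^{\dim A\dim B}\inter(B,A)$, and from $\partial F(\sigma^l \times I)$ conspire to give $\delta\eta = \vartheta^m_{f_1} - \vartheta^m_{f_0}$ on the nose rather than up to sign. A more conceptual route that avoids these computations entirely is to observe that $(f\times f)|_{\Delta(K)} : \Delta(K) \to \Delta(\Rspace^m)$ is a $\Zring_2$-equivariant map, that $\Delta(\Rspace^m) \simeq S^{m-1}$ equivariantly, and that the embedding cocycle $\vartheta^m_f(K)$ equals the pullback of a fixed generator under the induced map on special cohomology; the uniqueness of equivariant maps into $(S^\infty,\tau)$ up to equivariant homotopy (already recorded in Section~\ref{s:basic}) then forces the pulled-back cohomology class to be independent of $f$.
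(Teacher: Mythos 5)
The paper does not actually prove this lemma: it defers the proof to Shapiro \cite{Sha57} and offers only the heuristic paragraph about finger moves during a homotopy. Your plan is the standard difference-cochain argument that underlies both that heuristic and Shapiro's proof, so the strategy is the right one. However, as written it has one concrete flaw. With $\eta(\sigma^l\times\sigma^k)=(-1)^l\,\inter\bigl(F(\sigma^l\times I),f_0(\sigma^k)\bigr)$, the Leibniz computation you describe does make the $F(\partial\sigma^l\times I)$ terms cancel, but the surviving endpoint terms are $\inter\bigl(f_1(\sigma^l),f_0(\sigma^k)\bigr)-\inter\bigl(f_0(\sigma^l),f_0(\sigma^k)\bigr)$: the second argument is always $f_0(\sigma^k)$, so you obtain the coboundary between $\vartheta^m_{f_0}$ and the \emph{mixed} cochain $(\sigma^l\times\sigma^k)\mapsto(-1)^l\inter\bigl(f_1(\sigma^l),f_0(\sigma^k)\bigr)$, not $\vartheta^m_{f_1}$. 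Moreover this $\eta$ treats the two factors asymmetrically, so $\tau^\sharp\eta\neq(-1)^{m-1}\eta$ and $\eta$ does not lie in $C^{m-1}_{\rho_{m-1}}(\Delta(K))$; the "possibly symmetrized" caveat is doing real work that you have not carried out. Two standard repairs: either add the companion cochain $\eta'(\sigma^l\times\sigma^k)=\pm\inter\bigl(f_1(\sigma^l),F(\sigma^k\times I)\bigr)$, so that $\delta(\eta+\eta')$ telescopes from $\vartheta^m_{f_0}$ through the mixed cochain to $\vartheta^m_{f_1}$ and the sum is equivariant; or, cleaner, define $\eta$ by the level-preserving intersection of the two tracks $\{(F(x,t),t)\}$ and $\{(F(y,t),t)\}$ in $\Rspace^m\times I$ (dimensions $(l+1)+(k+1)=m+1$, so generic intersections are isolated), which is symmetric in the two factors by construction.

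Your alternative conceptual route (pulling back the generator along the equivariant map $\Delta(K)\to\Delta(\Rspace^m)\simeq S^{m-1}$ and invoking uniqueness of equivariant maps up to equivariant homotopy) is clean, but note that the identification of $\vartheta^m_f(K)$ with that pullback is precisely the nontrivial theorem of Wu/Shapiro that the paper states later as the equality of embedding classes and Smith classes; relying on it here would invert the logical order the paper sets up, in which the $f$-independence of $[\vartheta^m_f]$ is established first and the identification with the Smith classes comes afterwards.
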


\paragraph{}
The last lemma has an intuitive reasoning, we explain it for the case $m=2d$. Any two maps $K \rightarrow \Rspace^m$ are homotopic. During a PL homotopy, one situation in which 
an intersection point $p$ between $\sigma^d$ and $\tau^d$ is removed is when an intersection point moves, say, to the point $q$ of the boundary of $\sigma^d$, and then disappears from $\sigma^d$. This event is called a \textit{finger move}, and is modeled by placing a small $k$-sphere-with-hole that is linking with the boundary of $\sigma^d$ centered at $q$, next removing a small disk neighborhood of $p$ from $\tau^d$, and, attaching the sphere-with-hole with a narrow tube to the boundary of the removed disk. When the intersection point $q$ disappears, as well as by performing the model finger move, the change in $\vartheta^m(K)$ is $\pm \delta (\sigma^{d-1} \times \tau^d)$, where $\sigma^{d-1}$ is the boundary cell on which $q$ lies.  In general, other events that remove the intersection points do not change the cocycle $\vartheta^m_f$. 

\paragraph{}
Let $\vartheta^m(K)$ denote the cohomology class $[\vartheta^m_f]$. It is called the $m$-dimensional \textit{embedding class} of $K$. The class $\vartheta^{2d}(K)$ is also called \textit{the van Kampen obstruction} for embeddability of $K$ into $\Rspace^{2d}$. Note that the van Kampen obstruction always corresponds to an ordinary cohomology class of the quotient complex $\Delta(K)/\tau$, or a $\delta$-cohomology class of $(\Delta(K), \tau)$.

\paragraph{Remark} Similarly to the Smith classes one can also define the reduced embedding classes as cohomology classes of the quotient complex as follows. If $m$ is even then $$\vartheta^m_r(K)=(\pi^*)^{-1} \vartheta^m(K),$$ and if $m$ is odd, $$ \vartheta^m_r(K)=r_2^*(\pi^*)^{-1} \vartheta^m(K).$$ 

\paragraph{}
Assume $K$ embeddable into $\Rspace^m$. Then clearly $\vartheta^m(K)=0$, and indeed the embedding classes serve as obstructions for the embeddability. Recall that the Smith classes $\Phi^m(K)$ are also obstructions for the embeddability into $\Rspace^m$. One of the main results of \cite{Wu74} is that these two classes coincide, see Chapter V, Section 5 of that book.

\begin{Theorem}
For a (finite) simplicial complex $K$ the embedding classes $\vartheta^m(K)$ and the Smith classes $\Phi^m(K)$ coincide.
\end{Theorem}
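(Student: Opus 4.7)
The plan is to exhibit a common cocycle representative for both $\Phi^m(K) = A^m(\Delta(K),\tau)$ and $\vartheta^m(K)$, using an arbitrary general-position PL map $f\colon K \to \Rspace^m$. Both classes live in $H^m_{\rho_m}(\Delta(K))$, so it suffices to build, from $f$, a resolution $\phi_0, \ldots, \phi_{m-1}$ of $1 \in C^0(\Delta(K)/\tau)$ whose terminal coboundary $\delta\phi_{m-1}$ equals the embedding cocycle $\vartheta^m_f(K)$ (up to a uniform sign). This would identify $\Phi^m(K)$ with $\vartheta^m(K)$ on the nose as cohomology classes.

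The construction proceeds by mimicking, on $\Delta(K)$, the explicit resolution for the sphere $(S^{m-1},\alpha)$ computed in the example subsection. In general position, the formula $(x,y) \mapsto (f(x)-f(y))/\lVert f(x)-f(y)\rVert$ defines a $\Zring_2$-equivariant map $g$ on the $(m-1)$-skeleton of $\Delta(K)$, since on cells of total dimension at most $m-1$ the $f$-images of disjoint simplices are generically disjoint. Pulling back the sphere resolution cochains $\sigma_0,\ldots,\sigma_{m-1}$ along $g$ (after a simplicial approximation of $g$, if needed) yields cochains on $\Delta(K)^{(m-1)}$ that satisfy the resolution relations $(1+(-1)^i\tau^\sharp)\phi_i = \delta\phi_{i-1}$, since these relations hold on the sphere side by the example computation and are preserved by the equivariant chain pullback. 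One then extends these cochains to all of $\Delta(K)$ (say, by $\phi_{m-1}$ being zero on $m$-cells of a fundamental domain $F$ and using equivariance on $\tau F$), and $\delta\phi_{m-1}$ represents $A^m(\Delta(K))$ by definition.

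The final identification $\delta\phi_{m-1} = \pm\vartheta^m_f(K)$ is a degree computation on each $m$-cell $\sigma^l \times \sigma^k$ with $l+k=m$: the value of $\delta\phi_{m-1}$ on such a cell is the degree of the map $g|_{\partial(\sigma^l\times\sigma^k)}\colon S^{m-1}\to S^{m-1}$ extended by the sphere resolution convention, which counts, with appropriate sign, the algebraic intersection $\inter(f(\sigma^l),f(\sigma^k))$. By the very definition of the embedding cocycle this number is $(-1)^l\vartheta^m_f(\sigma^l\times\sigma^k)$ up to the fixed overall sign, so the two cocycles agree.

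The main obstacle will be the sign- and cellularity-bookkeeping in the middle step. On the sphere side the cell structure has only two cells per dimension and the orientation conventions are transparent; on $\Delta(K)$ the product orientation together with the identity $\tau_\sharp(\sigma_1\times\sigma_2) = (-1)^{d(\sigma_1)d(\sigma_2)}\sigma_2\times\sigma_1$ forces nontrivial sign bookkeeping, and these signs must interlock correctly with the factor $(-1)^l$ appearing in the definition of $\vartheta^m_f$. Moreover, $g$ is not cellular with respect to the coarse cell structure of $S^{m-1}$, so the pullback of cochains must either be carried out after a simplicial approximation or, more invariantly, be replaced by direct intersection-number formulas that encode the same cochain up to a canonical coboundary. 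Once the $\phi_i$ are produced and the resolution relations are verified, the equality $\Phi^m(K) = \vartheta^m(K)$ is immediate.
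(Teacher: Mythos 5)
The paper does not supply a proof of this theorem at all; it is stated as a result of Wu and the reader is sent to Chapter V, Section 5 of \cite{Wu74}. So there is no in-paper argument to compare against, and your proposal has to stand or fall on its own.

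Your overall strategy --- construct a resolution of $1 \in C^0(\Delta(K)/\tau)$ by pulling back the explicit sphere resolution $\sigma_0,\ldots,\sigma_{m-1}$ along the Gauss-type map $g(x,y) = (f(x)-f(y))/\lVert f(x)-f(y)\rVert$ on the $(m-1)$-skeleton, then identify $\delta\phi_{m-1}$ with the embedding cocycle by a degree count --- is the natural route and I believe it is essentially correct in outline. But as written there are genuine gaps, and you have located the main one yourself without resolving it. First, to make $g^\sharp\sigma_i$ literally a cochain you need $g$ to be cellular with respect to the two-cells-per-dimension CW structure on $S^{m-1}$; a simplicial approximation lands you in a subdivision, on which the $\sigma_i$ no longer live, so one has to either transport the sphere resolution to the subdivision equivariantly or replace the pullback by a direct combinatorial formula. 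Until this is done the assertion that the $\phi_i$ satisfy $(1+(-1)^i\tau^\sharp)\phi_i = \delta\phi_{i-1}$ exactly (not just up to coboundary) is unverified, and exactness matters here because it is the cochain $\delta\phi_{m-1}$, not just its class, that you want to compare with $\vartheta^m_f$. Second, the final step $\delta\phi_{m-1}(\sigma^l\times\sigma^k) = \pm(-1)^l\,\mathcal{X}(f(\sigma^l),f(\sigma^k))$ with a \emph{uniform} sign is stated but not computed; the factor $(-1)^l$ together with the product orientation convention $\tau_\sharp(\sigma_1\times\sigma_2)=(-1)^{d(\sigma_1)d(\sigma_2)}\sigma_2\times\sigma_1$ and the boundary orientation of $\sigma^l\times\sigma^k$ interact nontrivially, and this computation is precisely the mathematical content of the theorem, not a formality. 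Finally, the parenthetical about ``extending these cochains to all of $\Delta(K)$ (say, by $\phi_{m-1}$ being zero on $m$-cells)'' is garbled: $\phi_{m-1}$ is an $(m-1)$-cochain and is not evaluated on $m$-cells at all, and no extension of the $\phi_i$ for $i\le m-1$ is needed since the cochain group $C^i$ is the same whether you view it on the whole complex or on the $(m-1)$-skeleton. So: right plan, but the two hard steps you flag are exactly the proof and they are not carried out.
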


It has been proved by Shapiro \cite{Sha57} and Wu \cite{Wu74} that the van Kampen obstruction is complete when $d>2$. As mentioned in the Introduction, when $d=2$, it is known to be incomplete \cite{Freetal94}, and in this case, the embeddability problem is not known to be even decidable. Note that for $d>2$ there is a polynomial algorithm, namely computing $\vartheta^{2d}(K)$ from $\vartheta^{2d}_f(K)$ for an arbitrary $f$. The obstruction is also complete for $d=1$ \cite{Sar91}. We also note that, obviously, the reduced van Kampen obstruction $\vartheta^{2d}_r(K)$ are also complete for $d\neq 2$.

\paragraph{Remark 1}
The embedding class $\vartheta^m(K)$ is an $s$-cohomology class or a $\delta$-cohomology class of the system $(\Delta(K), \tau)$, depending on whether $m$ is odd or even, respectively. Therefore, they do not always correspond to an ordinary cohomology class of $\Delta(K)/\tau$. Since the $d$-cohomology of $(\Delta(K),\tau)$ is isomorphic with the ordinary cohomology of the quotient space $\Delta(K)/\tau$, we could hope to derive all of the Smith class as an ordinary cohomology class of the quotient complex. The $s$-cohomology groups pose a difficulty as was explained in Section \ref{s:smith}. Wu \cite{Wu74} associated an $s$-cohomology class with a class mod 2 of the quotient, which we also followed in our definition of $\vartheta^f_r(K)$. Shapiro \cite{Sha57}, uses ``twisted coefficients" for the cohomology of $\Delta(K)/\tau$. 

\paragraph{Remark 2}
One can give yet another, conceptually easier, definition of the van Kampen obstruction as follows. Consider the infinite dimensional sphere $S^\infty$ and the antipodal action on it. It is easy to define an equivariant map from any cellular $\Zring_2$-space into $S^\infty$, see Section \ref{s:basic}. This map is unique up to $\Zring_2$-equivariant homotopy, see \cite{Hus94}, Theorem 12.4. Hence, there exists a unique homomorphism $\phi_K:H_\delta^*(S^\infty) \rightarrow H^*_\delta(\Delta(K))$. For simplicity, let $m$ be even. Then $H_\delta^m(S^\infty)\cong H^m(\Rspace P^\infty)$. Moreover, there is a unique non-zero element $a_m \in H^m(\Rspace P^\infty)$, see, for instance, \cite{Hat02}, discussion after 3.19 or Section \ref{s:example}. The obstruction class is defined as $\phi_K(a_m)$. Since $A^m(S^\infty)=a_m$, as computed in Section \ref{s:example}, the existence of the map and Proposition \ref{p:equivmap} prove that the class defined here is the same as $A^m(\Delta(K),\tau)$. Since the non-zero element of $H^m(\Rspace P^\infty)$ has order two, it follows that the (reduced) van Kampen obstruction has also order two.

\subsection{Some Properties of the embedding classes}

We have already mentioned that the van Kampen obstruction is an element of order two. 

The reduced and non-reduced embedding classes can be characterized as successive cup products in $H^*(\Delta(K)/\tau,-)$ of the 1-dimensional embedding class with itself, see Shapiro \cite{Sha57} Theorem 4.4, and, Wu \cite{Wu74}, Section II.3 Proposition 11. The coefficients groups, their pairings, and the cup product are different in these two sources. In the latter, cohomology of $\Delta(K)/\tau$ is computed with $\Zring $ coefficients in the even dimensions and with $\Zring_2$ coefficients in the odd dimensions.
While in the former, the special homology groups are treated as a homology of the quotient complex with ``twisted coefficients" and a suitable cup product. These definitions in terms of the cup product can be used to compute a cocycle representative of $\vartheta^m{(K)}, \vartheta_r^m(K)$ for any $m>1$, using a representative for $\vartheta^1(K)$. A representative cocycle for $\vartheta^1(K)$ can also be computed using an ordering of vertices of $K$, see \cite{Sha57}, Theorem 4.3. Then, it is possible even to write down an explicit formula for a cocycle representing any embedding class, using an ordering of the vertices of $K$. This is given explicitly in \cite{Wu74}, Theorem 2 of Section III.2.

\paragraph{Remark}
A polynomial algorithm (assuming $d$ constant) for deciding if $\vartheta^m(K)=0$ can be given as follows. Define an arbitrary general position simplex-wise linear map $f:K \rightarrow \Rspace^m$. Then, compute the cocycle $\vartheta^m_f$ by computing the intersection numbers of disjoint cells of complementary dimensions (This can be made into a combinatorial problem by mapping the vertices to a moment curve, see \cite{Sha57}). Next, calculate the cohomology class of $\vartheta^m_f$ in $H^m_\delta(\Delta(K))$ if $m$ is even, and, in $H^m_s(\Delta(K))$ if $m$ is odd. All these calculations can be done in polynomial time in the number of simplices of $K$. 

\paragraph{}
Let $\Zring_\rho=\Zring$ where it is a coefficient group for an even dimensional cohomology and $\Zring_\rho=\Zring_2$ otherwise.
It is of benefit in many situations to be able to certify that $\vartheta^m(K)$ is not zero. The first guess for such a certificate is a cycle $z \in Z_m(\Delta(K)/\tau, \Zring_\rho)$ such that a representative cocycle for $\vartheta^m_r(K)$ evaluates to a non-zero number on $z$. However, in the case $m$ even one can prove the following.

\begin{lemma}\label{l:zeroooncycle}
Assume $m$ is even and let $\nu \in Z^{m}(\Delta(K)/\tau)$ represent $\vartheta^m_r(K)$. For any cycle $z \in Z_m(\Delta(K)/\tau)$, $\nu(z)=0$.
\end{lemma}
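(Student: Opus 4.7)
My plan is to exploit the fact, noted in Remark 2 immediately above, that the reduced van Kampen obstruction is an element of order two, and then argue that a $2$-torsion cohomology class must evaluate to zero on every integral cycle. This is the whole content of the lemma; once 2-torsion is in hand there is nothing else to do.

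First I would recall that $\vartheta^m_r(K)$ pulls back from the generator $a_m \in H^m(\Rspace P^\infty; \Zring)$ along the classifying map $\Delta(K)/\tau \to \Rspace P^\infty$ of the double cover $\Delta(K)\to \Delta(K)/\tau$. Since $a_m$ has order two in $H^m(\Rspace P^\infty;\Zring)$ (standard fact, and already invoked in the excerpt), the same is true of its pullback $\vartheta^m_r(K)$. Alternatively, one can read 2-torsion directly off any of the cup-product formulas referenced from \cite{Sha57} and \cite{Wu74}: in even degree the reduced obstruction equals $\vartheta^1(K)\smile\vartheta^1(K)\smile\cdots$, and a symmetry/anticommutativity argument shows that twice this class is zero. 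Either way we obtain a cochain $\mu\in C^{m-1}(\Delta(K)/\tau;\Zring)$ with
\begin{equation*}
2\nu \;=\; \delta \mu.
\end{equation*}

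Now let $z\in Z_m(\Delta(K)/\tau;\Zring)$. Evaluating the identity above on $z$ and using $\partial z=0$,
\begin{equation*}
2\,\nu(z) \;=\; (\delta\mu)(z) \;=\; \mu(\partial z) \;=\; 0.
\end{equation*}
Since $\nu(z)$ is an integer and $2\nu(z)=0$ in $\Zring$, we conclude $\nu(z)=0$, as required.

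The potentially substantive step is the 2-torsion input, but this has already been established earlier in the paper (Remark 2, and implicitly in the Example of \ref{s:example} where $A_r^i(S^n,\tau)=[\gamma_i]$ generates a $\Zring_2$ summand in even degrees). Given that input, the remaining argument is a one-line divisibility observation and does not need any further structure of the embedding cocycle $\vartheta^m_f$.
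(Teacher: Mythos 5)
Your proof is correct, and it takes a route genuinely different from the paper's. You reduce the lemma to the observation that a torsion class in integer cohomology pairs trivially with integer cycles: since $\vartheta^m_r(K)$ has order two (Remark~2), one has $2\nu = \delta\mu$ for some $\mu$, hence $2\nu(z) = \mu(\partial z) = 0$, and torsion-freeness of $\Zring$ gives $\nu(z) = 0$. This is a clean one-liner and, as you note, applies verbatim to any torsion class; it is essentially the Ext-vs-Hom splitting of universal coefficients, recast as a divisibility argument on cochains. The paper instead pushes the cycle forward along a cellular classifying map $f: \Delta(K)/\tau \to \Rspace P^\infty$ and uses the explicit CW structure of $\Rspace P^\infty$: for $m$ even positive, $\partial\gamma^m = 2\gamma^{m-1} \neq 0$, so there are no nonzero cellular $m$-cycles, and therefore $f_\sharp(z) = 0$. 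Both arguments ultimately rest on the pullback $\vartheta^m_r(K) = f^*A^m_r(\Rspace P^\infty)$; your version packages the needed input as the 2-torsion fact (established earlier), while the paper's version avoids invoking torsion and argues directly at the chain level. Your argument is shorter once 2-torsion is granted; the paper's is self-contained given only the cellular model of $\Rspace P^\infty$. One small caution: the secondary route you sketch through cup products (that anticommutativity forces $2$-torsion) is vaguer than it appears here --- the relevant cup-product structure on $\Delta(K)/\tau$ mixes $\Zring$ and $\Zring_2$ coefficients in a nontrivial way, as the paper notes, so that sketch would need real work to make rigorous --- but since your primary justification via the classifying map is airtight, this does not affect the correctness of the proof.
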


\begin{proof}
Let $\tilde{f}:(\Delta(K),\tau) \rightarrow (S^{\infty},\tau)$ be a $\Zring_2$-equivariant map, and let $f: \Delta(K)/\tau \rightarrow \Rspace P^{\infty}$ be the quotient map. Then, we have, analogous to the equation (\ref{e:functor}), $$f^* (A^{m}_r(\Rspace P^\infty)) = A^{m}_r(\Delta(K)/\tau) = \vartheta^m_r(K),$$ see \cite{Wu74}, Section II.3, Proposition 14.

The map $f$ can be assumed to be cellular, in the sense that, the image of an $i$-dimensional cell of $\Delta(K)/\tau$ is the $i$-dimensional cell of $\Rspace P^\infty$ or the cell is mapped into the lower dimensional cells. Then, for any cycle $z \in Z_m(\Delta(K)/\tau)$, $A^m_r(K)([z]) = f^*A^m_r(\Rspace P^\infty)([z])=A^m_r(\Rspace P^\infty) ([f_\sharp(z)])$. However, since $m$ is even, $Z_m(\Rspace P^\infty)=0$ as a cellular chain group and $[f_\sharp(z)]=0$ since $f_\sharp(z)$ is a cellular cycle.
\end{proof}

\paragraph{Remark} There are well-known situations where the mod 2 reduction of the van Kampen obstruction evaluates to non-zero on a mod 2 cycle. This is the case for instance for the mod 2 reduction of the $4$-dimensional obstruction for $K = \Delta^{(2)}_6$, i.e., the 2-skeleton of the 6-simplex. This is the complex whose non-embeddability into $\Rspace^4$ was proved by van Kampen \cite{vKam33} using the fact just mentioned, and the fact that every other representative of the obstruction must evaluate also to 1 on this cycle. Hence, there can be no embedding of $\Delta^{(2)}_6$ into $\Rspace^4$.

To reconcile the above with Lemma \ref{l:zeroooncycle}, observe that any mod 2 cycle on which the mod 2 reduction of the obstruction evaluates to non-zero must be the mod 2 reduction of chains with non-zero boundary. The case $\Rspace P^\infty$ exemplifies this phenomenon clearly. In \ref{s:example}, we computed that $A^m_r(\Rspace P^\infty)$ is non-zero and is the generator of $H^m(\Rspace P^\infty, \Zring_\rho)$. There is only one cell $\gamma^{m}$ of any dimension and for $m=2k$ we have $\partial \gamma^{2k} = 2\gamma^{2k-1}$. Hence, there is no non-zero cycle in this dimension, and $a_r^{2k}(\gamma^{2k})= \pm 1$, where $A^{2k}_r=[a^{2k}_r]$. Let $r_2$ denote reduction mod 2. Then $r_2(\gamma^{2k})$ becomes a $\Zring_2$-cycle and $r_2^*(A^{2k}(\Rspace P^\infty))([r_2(\gamma^{2k})]))=1$.

\section{The embedding classes of $[3]*K$}\label{s:main}

\subsection{Preliminaries} 
In our arguments, we make use of operations that map cells of $\Delta(K)$ to cells of $\Delta([3]*K)$. We describe these operation here. 

If $v$ is a vertex and $\sigma$ an oriented simplex not containing $v$, we write $v\sigma$ for the oriented simplex $v*\sigma$, where the orientation of $v\sigma$ is defined by adding $v$ as the new first element and then ordering vertices of $\sigma$ as in oriented simplex $\sigma$. With this convention, we have, $$\partial (v \sigma)=\sigma-v(\partial \sigma).$$
Let $v$ be a vertex not in $K$, and let $\sigma_1, \sigma_2 \in K$ be disjoint simplices. Define a function $v:\Delta(K) \rightarrow \Delta(v*K)$ by $$v(\sigma_1 \times \sigma_2) = v\sigma_1 \times \sigma_2.$$
The function $v(\cdot)$ defines a homomorphism $C(\Delta(K)) \rightarrow C(\Delta(v*K))$ that we also denote by $v$.

\begin{lemma}
For any oriented cell $\sigma_1 \times \sigma_2 \in C(\Delta(K))$, $$\partial v(\sigma_1 \times \sigma_2)= \sigma_1 \times \sigma_2 - v(\partial(\sigma_1 \times \sigma_2)).$$

\end{lemma}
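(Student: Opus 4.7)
The plan is to carry out a direct calculation by unfolding $\partial v(\sigma_1\times\sigma_2) = \partial(v\sigma_1\times\sigma_2)$ with the product boundary formula recalled in the ``Transformations on product chain complexes'' paragraph, and then applying the elementary identity $\partial(v\sigma)=\sigma - v(\partial\sigma)$ stated just above the lemma to the first factor. The only mild bookkeeping is the sign $(-1)^{d(v\sigma_1)} = -(-1)^{d(\sigma_1)}$, which is exactly what allows the $(-1)^{d(\sigma_1)}$ produced by the product rule on $\sigma_1\times\sigma_2$ to reappear with the correct sign inside $v(\partial(\sigma_1\times\sigma_2))$.

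More concretely, I would proceed in three steps. First, apply the Leibniz rule:
\[
\partial(v\sigma_1\times\sigma_2) = \partial(v\sigma_1)\times\sigma_2 + (-1)^{d(v\sigma_1)}\,v\sigma_1\times\partial\sigma_2 .
\]
Second, substitute $\partial(v\sigma_1)=\sigma_1 - v(\partial\sigma_1)$ and use $d(v\sigma_1)=d(\sigma_1)+1$ to rewrite the right-hand side as
\[
\sigma_1\times\sigma_2 \;-\; v(\partial\sigma_1)\times\sigma_2 \;-\; (-1)^{d(\sigma_1)}\,v\sigma_1\times\partial\sigma_2 .
\]
Third, pull the $v$ out of each of the last two terms using the definition $v(\tau_1\times\tau_2)=v\tau_1\times\tau_2$, obtaining
\[
\sigma_1\times\sigma_2 \;-\; v\bigl(\partial\sigma_1\times\sigma_2 + (-1)^{d(\sigma_1)}\sigma_1\times\partial\sigma_2\bigr)
\;=\; \sigma_1\times\sigma_2 - v(\partial(\sigma_1\times\sigma_2)),
\]
which is the claim.

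There is no real obstacle here; the entire argument is a one-line sign manipulation. The only thing to be careful about is the compatibility of $v$ with the product boundary: because $v$ only touches the first factor, ``pulling it out'' in the second term requires the sign $(-1)^{d(\sigma_1)}$ already present from the Leibniz rule, rather than $(-1)^{d(v\sigma_1)}$. Keeping track of this one sign and the fact that the face formula $\partial(v\sigma)=\sigma - v(\partial\sigma)$ introduces a minus sign in front of $v(\partial\sigma_1)\times\sigma_2$ is the whole content of the proof.
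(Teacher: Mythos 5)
Your proof is correct and is essentially the same computation the paper gives: apply the Leibniz rule, substitute $\partial(v\sigma_1)=\sigma_1 - v(\partial\sigma_1)$ with the shift $d(v\sigma_1)=d(\sigma_1)+1$, and regroup under $v$. The only difference is that you spell out the Leibniz step before substituting, whereas the paper combines the two into its first displayed line.
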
\label{l:partialv}

\begin{proof}
Using the formula for the boundary of a product cell, $$\partial(\sigma_1 \times \sigma_2)=\partial \sigma_1 \times \sigma_2 + (-1)^{d(\sigma_1)}\sigma_1 \times \partial \sigma_2,$$
we compute, 
\begin{equation*}
\begin{split}
\partial v(\sigma_1 \times \sigma_2)&= \sigma_1 \times \sigma_2 - v\partial\sigma_1\times\sigma_2 + (-1)^{d_1+1}v\sigma_1\times\partial\sigma_2\\
&= \sigma_1 \times \sigma_2 - v(\partial\sigma_1\times \sigma_2)+(-1)^{d_1+1} v(\sigma_1\times \partial\sigma_2)\\
&= \sigma_1\times \sigma_2 - v(\partial(\sigma_1 \times \sigma_2)). 
\end{split}
\end{equation*}
\end{proof}

Let $v,w$ be two vertices not in $K$, and let $\sigma_1, \sigma_2 \in K$ be disjoint simplices. Define the function $vw:\Delta(K) \rightarrow \Delta(\{v,w\}*K)$ by
$$ vw(\sigma_1 \times \sigma_2) = (-1)^{d(\sigma_1)}v\sigma_1 \times w\sigma_2.$$

We write also $vw$ for the homomorphism $C(\Delta(K)) \rightarrow C(\Delta(\{v,w\}*K))$ defined by the function $vw$.
\begin{lemma}\label{l:partialvw}
For any oriented cell $\sigma_1 \times \sigma_2 \in C(\Delta(K))$,$$\partial(vw(\sigma_1\times\sigma_2))=(-1)^{d_1}\sigma_1 \times w \sigma_2 -v\sigma_1 \times \sigma_2+vw(\partial(\sigma_1 \times \sigma_2)). $$
\end{lemma}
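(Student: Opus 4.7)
The plan is a direct computation, entirely parallel to the proof of the previous lemma but with more sign bookkeeping. Set $d_1 = d(\sigma_1)$ and $d_2 = d(\sigma_2)$, and recall that $v\sigma_1$ has dimension $d_1+1$. First I would expand $\partial(v\sigma_1 \times w\sigma_2)$ using the Leibniz rule for a product of cells, obtaining
$$\partial(v\sigma_1 \times w\sigma_2) = \partial(v\sigma_1) \times w\sigma_2 + (-1)^{d_1+1} v\sigma_1 \times \partial(w\sigma_2).$$
Then I would apply the cone boundary formula $\partial(v\sigma) = \sigma - v(\partial\sigma)$ to each of the two factors and expand, arriving at four terms.

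Next, I would multiply the result by $(-1)^{d_1}$ to get $\partial(vw(\sigma_1 \times \sigma_2))$, using that $(-1)^{2d_1+1} = -1$ and $(-1)^{2d_1} = 1$; this should yield
$$\partial(vw(\sigma_1\times\sigma_2)) = (-1)^{d_1}\sigma_1 \times w\sigma_2 - v\sigma_1 \times \sigma_2 - (-1)^{d_1} v\,\partial\sigma_1 \times w\sigma_2 + v\sigma_1 \times w\,\partial\sigma_2.$$

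In parallel I would compute $vw(\partial(\sigma_1 \times \sigma_2))$ directly from the definition. Writing $\partial(\sigma_1 \times \sigma_2) = \partial\sigma_1 \times \sigma_2 + (-1)^{d_1}\sigma_1 \times \partial\sigma_2$, and being careful that on cells of $\partial\sigma_1 \times \sigma_2$ the first factor has dimension $d_1-1$, the definition of $vw$ produces the sign $(-1)^{d_1-1}$ on the first term and $(-1)^{d_1}\cdot(-1)^{d_1}=1$ on the second, giving
$$vw(\partial(\sigma_1 \times \sigma_2)) = -(-1)^{d_1} v\,\partial\sigma_1 \times w\sigma_2 + v\sigma_1 \times w\,\partial\sigma_2.$$
Substituting this into the expression above yields the claimed identity.

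The only real obstacle is sign discipline: one must distinguish the factor $(-1)^{d(\sigma_1)}$ that appears in $vw(\sigma_1 \times \sigma_2)$ from the factor $(-1)^{d(\sigma_1)-1}$ that appears when $vw$ is applied to cells in $\partial\sigma_1 \times \sigma_2$, and track the shift $d(v\sigma_1)=d_1+1$ in the Leibniz rule. There is no conceptual difficulty beyond this bookkeeping, so the proof is essentially a one-paragraph calculation.
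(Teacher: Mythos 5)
Your computation is correct and matches the paper's own proof, which is the same direct expansion via the Leibniz rule and the cone formula $\partial(v\sigma)=\sigma-v(\partial\sigma)$, followed by regrouping the two remainder terms as $vw(\partial(\sigma_1\times\sigma_2))$ using the dimension-shifted sign $(-1)^{d_1-1}$ on the $\partial\sigma_1\times\sigma_2$ part. The sign bookkeeping you flag is exactly the content of the paper's calculation, and your intermediate expressions agree with it term by term.
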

\begin{proof}
We just compute 
\begin{equation*}
\begin{split}
\partial(vw(\sigma_1 \times \sigma_2))&=(-1)^{d_1} \sigma_1\times w\sigma_2+(-1)^{d_1+1}v\partial\sigma_1\times w \sigma_2\\&+(-1)^{2d_1+1}v \sigma_1 \times \sigma_2  +(-1)^{2d_1+2} v\sigma_1 \times w \partial \sigma_2\\
&= (-1)^{d_1}\sigma_1 \times w \sigma_2+ (-1)^{1}v \sigma_1\times \sigma_2+vw(\partial \sigma_1\times \sigma_2)\\&+(-1)^{d_1}vw(\sigma_1 \times \partial \sigma_2)\\
&=(-1)^{d_1}\sigma_1 \times w \sigma_2 -v\sigma_1 \times \sigma_2+vw(\partial(\sigma_1 \times \sigma_2)).
\end{split}
\end{equation*}
\end{proof}

\begin{lemma}\label{l:tausharp}
For an oriented cell $\sigma_1 \times \sigma_2 \in C(\Delta(K))$ we have $$\sigma_1 \times w \sigma_2 = (-1)^{d_1}\tau_\sharp w(\tau_\sharp(\sigma_1 \times \sigma_2))$$
\end{lemma}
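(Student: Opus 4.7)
The plan is to verify the identity by direct computation, unwinding the right-hand side one operation at a time using the two sign conventions established earlier in the paper: the formula $\tau_\sharp(\alpha\times\beta)=(-1)^{d(\alpha)d(\beta)}\beta\times\alpha$, and the definition $w(\alpha\times\beta)=w\alpha\times\beta$ (which only touches the left factor). No homological content is needed; the claim is purely a sign-bookkeeping statement.

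First I would apply $\tau_\sharp$ to $\sigma_1\times\sigma_2$ to obtain $(-1)^{d_1 d_2}\,\sigma_2\times\sigma_1$, where $d_i=d(\sigma_i)$. Then, applying the function $w$, which prepends $w$ to the left factor, yields $(-1)^{d_1 d_2}\,w\sigma_2\times\sigma_1$. Finally, applying $\tau_\sharp$ once more, now to a product whose left factor $w\sigma_2$ has dimension $d_2+1$ and whose right factor $\sigma_1$ has dimension $d_1$, introduces a sign $(-1)^{(d_2+1)d_1}$, giving
\begin{equation*}
\tau_\sharp w(\tau_\sharp(\sigma_1\times\sigma_2)) = (-1)^{d_1 d_2+(d_2+1)d_1}\,\sigma_1\times w\sigma_2 = (-1)^{2d_1 d_2+d_1}\,\sigma_1\times w\sigma_2 = (-1)^{d_1}\,\sigma_1\times w\sigma_2.
\end{equation*}
Multiplying both sides by $(-1)^{d_1}$ gives the desired identity.

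There is no real obstacle here — the only thing to watch is that the two invocations of $\tau_\sharp$ use different dimensions (since the first application swaps $\sigma_1$ and $\sigma_2$, and the intermediate $w$-operation changes the dimension of the left factor from $d_2$ to $d_2+1$). Once that is tracked correctly, the exponents collapse modulo $2$ to $d_1$, and the statement follows immediately.
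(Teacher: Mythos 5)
Your computation is correct and is essentially identical to the paper's own proof: both unwind the right-hand side as $\tau_\sharp$, then $w$, then $\tau_\sharp$, picking up the signs $(-1)^{d_1d_2}$ and $(-1)^{(d_2+1)d_1}$, which collapse to $(-1)^{d_1}$. Nothing further is needed.
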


\begin{proof}
We have
\begin{equation*}
\begin{split}
\tau_\sharp w (\tau_\sharp(\sigma_1 \times \sigma_2))&= (-1)^{d_1d_2} \tau_\sharp w (\sigma_2 \times \sigma_1)\\
&= (-1)^{d_1d_2} \tau_\sharp(w\sigma_2 \times \sigma_1)\\
&= (-1)^{d_1d_2+(d_2+1)d_1}\sigma_1 \times w \sigma_2\\
&= (-1)^{d_1} \sigma_1 \times w \sigma_2.
\end{split}
\end{equation*}
\end{proof}

From lemmas \ref{l:tausharp} and \ref{l:partialvw} follows that
\begin{equation}\label{e:boundary}
\begin{split}
\partial(vw(\sigma_1 \times \sigma_2))=  \tau_\sharp(w (\tau_\sharp(\sigma_1 \times \sigma_2))) -v(\sigma_1 \times \sigma_2)+vw(\partial(\sigma_1 \times \sigma_2)).
\end{split}
\end{equation}

\subsection{Complexes with non-zero mod 2 obstruction classes}
Our proof is simpler if we work with mod 2 homology and obstruction classes. Therefore, we first present the simpler case where we assume that the mod 2 obstruction class is non-zero and later we show the general case. We prove the following theorem.

\begin{Theorem}\label{t:mod2}
Let $K$ be a simplicial $d$-complex. Assume $r_2^*(\vartheta^{m}_r(K)) \neq 0$, where $r_2^*: H^{m}(\Delta/\tau) \rightarrow H^{m}(\Delta/\tau,\Zring_2)$ is induced by the reduction mod 2 of cochains. Then $r_2^*(\vartheta^{m+2}_r([3]*K)) \neq 0$.
\end{Theorem}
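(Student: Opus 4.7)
Working mod~2 throughout (so that the $\delta$- and $s$-cohomology coincide and cohomology is dual to homology), the plan is to construct an explicit mod~2 witness cycle $Z \in Z_{m+2}(\Delta([3]*K)/\tau, \Zring_2)$ and verify that any representative cocycle of $r_2^*(\vartheta^{m+2}_r([3]*K))$ pairs non-trivially with $Z$. The hypothesis $r_2^*(\vartheta^m_r(K)) \neq 0$ over the field $\Zring_2$ yields a witnessing mod~$2$ cycle $z \in Z_m(\Delta(K)/\tau, \Zring_2)$ and a cocycle representative $\phi$ with $\phi(z) = 1$; I lift $z$ to the $\tau$-invariant mod~$2$ cycle $\tilde z := \bar\pi_\sharp(z) \in Z_m(\Delta(K), \Zring_2)$.

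For the construction of $Z$, I would apply the ``mixed-join'' operations $ab$ of Lemma~\ref{l:partialvw} to $\tilde z$ with distinct new vertices $a, b \in \{u, v, w\}$. Reducing equation~(\ref{e:boundary}) modulo~$2$ and using $\partial\tilde z = 0$ gives
\[
\partial(ab \cdot \tilde z) \equiv a(\tilde z) + \tau_\sharp\bigl(b(\tau_\sharp\tilde z)\bigr) \pmod{2},
\]
where $a(\tilde z)$ (from Lemma~\ref{l:partialv}) joins $a$ to the left factor of every cell of $\tilde z$. I then take the cyclic sum $\tilde Z := uv\cdot\tilde z + vw\cdot\tilde z + wu\cdot\tilde z$. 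Since $\tilde z$ is $\tau$-invariant, $\tau_\sharp(b(\tau_\sharp\tilde z))$ and $b(\tilde z)$ have equal projections under $\pi_\sharp$, so in the quotient $\Delta([3]*K)/\tau$ the six boundary contributions cancel in cyclic pairs modulo~$2$. Hence $Z := \pi_\sharp(\tilde Z)$ is a (non-trivial) mod~$2$ cycle of dimension $m+2$ in the quotient.

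For the evaluation, I pick a cocycle representative coming from a generic PL map $F : [3]*K \to \Rspace^{m+2}$ that extends a generic PL map $f : K \to \Rspace^m$ realizing $\vartheta^m_f = \phi$. On each cell $a\sigma_1 \times b\sigma_2$ appearing in an orbit of $Z$, the embedding cocycle $\vartheta^{m+2}_F$ is the mod~$2$ intersection number of the cone simplices $F(a\sigma_1)$ and $F(b\sigma_2)$ in $\Rspace^{m+2}$. A generic-position analysis, placing $F(u), F(v), F(w)$ in sufficiently general position relative to $\Rspace^m \hookrightarrow \Rspace^{m+2}$ and perturbing $f$ transversely into the extra directions, should show that each such cone-cone intersection number is controlled, mod~$2$, by $\inter(f(\sigma_1), f(\sigma_2)) = \vartheta^m_f(\sigma_1 \times \sigma_2)$. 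Summed over the six orbits in $Z$ originating from each orbit in $z$, the total would then evaluate to $\phi(z) = 1$ mod~$2$, giving the desired non-vanishing.

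The main obstacle will be this third step. With a fully symmetric extension $F$, all six contributions from the cyclic orbits become equal and sum to $6 \equiv 0 \pmod 2$. Making the argument go through requires either an asymmetric choice of $F$ (so the six cone-cone intersection numbers differ in sign or multiplicity in a controlled way), a refinement of $Z$ whose orbit structure avoids this cancellation, or a re-routing via Shapiro's cup-product formula $\vartheta^{m+2} = \vartheta^m \cup \vartheta^2$ combined with functoriality of the restriction $\Delta(K) \hookrightarrow \Delta([3]*K)$.
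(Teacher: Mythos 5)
Your construction of the witness cycle is exactly the paper's: the mod~2 duality argument producing $z$ with $\phi(z)=1$, the lift $\tilde z=\bar\pi_\sharp(z)$, the cyclic sum $\tilde Z = uv\cdot\tilde z+vw\cdot\tilde z+wu\cdot\tilde z$, and the verification via equation~(\ref{e:boundary}) that $\partial\tilde Z=(1+\tau_\sharp)\bigl(u(\tilde z)+v(\tilde z)+w(\tilde z)\bigr)$, hence that $\pi_\sharp\tilde Z$ is a mod~2 cycle of dimension $m+2$. The gap is precisely where you place it: the evaluation step is not carried out, and the geometric route you sketch does not close it. The paper's proof never touches intersection numbers here. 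It evaluates the representative $\bar\pi^\sharp\psi_{m+2}$ coming from a mod~2 resolution $\psi_0,\psi_1,\dots$ of $1\in C^0(\Delta([3]*K)/\tau,\Zring_2)$, and uses the resolution relations $\delta\psi_i=(1+\tau^\sharp)\psi_{i+1}$ to telescope \emph{twice}: first $\bar\pi^\sharp\psi_{m+2}(\pi_\sharp\tilde Z)=\psi_{m+1}(\partial\tilde Z)=(1+\tau^\sharp)\psi_{m+1}\bigl(u(\tilde z)+v(\tilde z)+w(\tilde z)\bigr)=\delta\psi_m\bigl(u(\tilde z)+v(\tilde z)+w(\tilde z)\bigr)$, and then, since $\partial(a(\tilde z))=\tilde z-a(\partial\tilde z)=\tilde z$ by Lemma~\ref{l:partialv}, this equals $\psi_m(3\tilde z)=\psi_m(\tilde z)$, which (after restricting the resolution to $\Delta(K)$ and rewriting $\tilde z=(1+\tau_\sharp)y$) is $A^m_{2,r}(K)([z])\neq 0$. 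The multiplicity that matters is therefore $3$ (the three cone chains $u(\tilde z),v(\tilde z),w(\tilde z)$), which is odd; the factor $(1+\tau_\sharp)$ is absorbed by converting $\psi_{m+1}$ into $\delta\psi_m$ rather than contributing a factor of $2$. Your count of $6\equiv 0$ arises from collapsing these two descents into a single cell-by-cell count on the top cycle, which is not how the class pairs with $\pi_\sharp\tilde Z$.

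Two further cautions about the fixes you propose. First, the symmetric coning map $F$ is not in general position at the points relevant to $\vartheta^{m+2}_F$: the cones $F(a\sigma_1)$ and $F(b\sigma_2)$ lie in distinct half-hyperplanes through $\Rspace^m$ and meet only along their bases inside $\Rspace^m$, so their ``intersection number'' is a boundary phenomenon requiring exactly the local surgeries of Section~\ref{s:embedding} before it is even defined; the assertion that each of the six numbers equals $\inter(f(\sigma_1),f(\sigma_2))$ mod~2 is therefore not available without substantial extra work, and cannot all be true given the algebraic answer. Second, the cup-product rerouting via the restriction $\iota:\Delta(K)\hookrightarrow\Delta([3]*K)$ points the wrong way: $\iota^*$ pulls classes of $\Delta([3]*K)$ back to $\Delta(K)$, and $\iota^*\vartheta^{m+2}([3]*K)=\vartheta^{m+2}(K)$, which typically vanishes, so no contradiction can be extracted from it directly. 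As written, then, the proposal establishes the correct certificate cycle but not the non-vanishing of the pairing, which is the substance of Theorem~\ref{t:mod2}.
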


\begin{proof}
For any $m$ let $A_r^m=\vartheta_r^m = \vartheta_r^m(K)$. From the assumption it follows that there exists a mod 2 cycle $\tilde{z} \in Z_{m}(\Delta(K)/\tau, \Zring_2)$ such that $r_2^*(A^{m}_r)([\tilde{z}]) \neq 0$.

Let $\{ \Gamma_i\}$ be a set of $m$-cells of $\Delta(K)$ forming a fundamental domain for all the $m$-cells. Then, $\{[\Gamma_i]\}$ is the set of $m$-cells of $\Delta(K)/\tau$. Therefore, $\tilde{z}$ might be written as $$\tilde{z} = \sum_{j \in J} [\Gamma_j]$$ for some index set $J$. Now let $z \in Z_{m}^\delta(\Delta(K),\Zring_2)$ be defined as $$z = \sum_{j \in J} \Gamma_j + \tau_\sharp \Gamma_j=(1+\tau_\sharp)\sum_{j\in J}\Gamma_j = \bar{\pi}_\sharp(\tilde{z}).$$
Note that since $\bar{\pi}_\sharp$ is a chain map (\cite{Wu74} II.1 Proposition 2), $z$ is a $\delta$-cycle. Let $y=\sum_{j\in J} \Gamma_j$, then, $z=(1+\tau_\sharp)y$, and, $\pi_\sharp(y)=\tilde{z}.$

Let the three vertices of $[3]$ be $u,v$ and $w$. We assume that the functions $u,v,w,uv, \ldots$ defined in the previous section map into $[3]*K$. Consider the chain $\zeta \in C_{m+2}(\Delta([3]*K),\Zring_2)$ defined as
$$\zeta = vw(z)+wu(z)+uv(z).$$

A calculation using equation (\ref{e:boundary}) shows that
$$\partial(\zeta) = v(z)+u(z)+w(z) + \tau_\sharp(w(\tau_\sharp(z)) + \tau_\sharp(v(\tau_\sharp(z))) + \tau_\sharp(u(\tau_\sharp(z))).$$

Since $z$ is a mod 2 $\delta$-cycle $(1-\tau_\sharp)z=0$, hence $\tau_\sharp z = z$. The above can be written
$$\partial(\zeta) = (1+\tau_\sharp)(v(z) + u(z) + w(z)).$$
It follows that $\pi_\sharp (\zeta)$ is a $\Zring_2$-cycle in $\Delta([3]*K)/\tau$.

We work here with the special and reduced Smith classes that are defined using $\Zring_2$ coefficients instead of $\Zring$. Let $A^n_2(-,\tau) \in H^n(-, \Zring_2)$ denote the special classes. We have $r_2^*\mathbf{1} = \mathbf{1}_2$, i.e., $r_2^*A^0 = A^0_2.$ From \cite{Wu74}, II.3, Proposition 3, the reduction mod 2, $r_2$, commutes with the maps $\mu^i_\delta, \mu^i_s$. Therefore, $r_2^*A^1 = r_2^*\mu_\delta^1(\mathbf{1})=\mu_\delta^1(r_2^*A^0)=A^1_2$, etc. It follows that $$ r_2^*A^n(K,\tau) = A^n_2.$$
Therefore it is enough to show that $A^{m+2}_2([3]*K) \neq 0$ which we do in the following.

Let $A_2 = A_2^{m}([3]*K)$ so that $\mu_\delta^{m,2}(A_2)=A_2^{m+2}([3]*K).$ Let moreover $\iota: \Delta(K) \rightarrow \Delta(K\times[3])$ be the inclusion. Then $A_2^{m}(K)=\iota^* A_2$. 

Let $\psi_0,\psi_1,\ldots$ be a mod 2 resolution for the (mod 2) cocycle $1 \in C^0(\Delta([3]*K)/\tau, \Zring_2). $ Then, $\delta \psi_i = (1+\tau_\sharp) \psi_{i+1}$ and recall that $A_{2,r}^{i}([3]*K)=[\bar{\pi}^\sharp \psi_i]$.

We calculate
\begin{equation*}
\begin{split}
    A_{2,r}^{m+2}([\pi_\sharp \zeta]) &= (\bar{\pi}^\sharp \psi_{m+2})(\pi_\sharp \zeta)\\
    &= {({\pi}^\sharp)}^{-1}(\delta\psi_{m+1}) (\pi_\sharp \zeta)\\
    &=\delta\psi_{m+1}(\zeta) = \psi_{m+1}(\partial \zeta) \\&= \psi_{m+1}((1+\tau_\sharp)(v(z)+u(z)+w(z)))\\
    &=(1+\tau^\sharp)(\psi_{m+1})(v(z)+u(z)+w(z)) \\&= \delta \psi_{m} (v(z)+u(z)+w(z))\\
    &= \psi_{m}(z) = (1+\tau^\sharp)\psi_{m}(y)\\
    &={\pi^\sharp} (\pi^\sharp)^{-1} (1+\tau^\sharp) \psi_{m} (y)\\
    &= {(\pi^\sharp)}^{-1} (1+\tau^\sharp) \psi_{m} (\pi_\sharp y) = {\bar{\pi}^\sharp}\psi_{m} (\pi_\sharp y).
    \end{split}
\end{equation*}
Now observe that $\tilde{z}=\pi_\sharp y$ is a chain in $\Delta(K)/\tau \subset \Delta([3]*K)/\tau$ and $\iota^\sharp$ is the restriction of a cochain, therefore, $\bar{\pi}^\sharp \psi_{m} (\tilde{z})= \iota^\sharp \bar{\pi}^\sharp \psi_{m} (\tilde{z})=A_{2,r}^{m}(K)([\tilde{z}])\neq 0$. It follows that $A_{2,r}^{m+2}([\pi_\sharp \zeta]) \neq 0$, it follows that $A^{m+2}_{2,r}([3]*K) \neq 0$.
\end{proof}

\subsection{Complexes with non-zero integer obstruction classes}

We start with an algebraic lemma that provides us with a method to certify that a cocycle represents a non-zero torsion cohomology class by evaluating it on a chain. We have not encountered such a lemma in the literature, however, this is a simple result of homological algebra. 

\begin{lemma}[Certificate for Elements of Ext]\label{l:generalchain}
Let $L$ be a general cell complex. Let $\phi$ be an (integer) $i$-cocycle and and $c$ an (integer) $i$-chain, such that, $\partial c=n d$ for an integer $n>1$ and an $(i-1)$-cycle $d$. Assume $\phi(c)\neq 0$ mod $n$. Then $[\phi] \neq 0$ as an integer cohomology class. Conversely, if $\phi(z)=0$ for all $i$-cycles \footnote{Note that in this case $[\phi]\in \text{Ext}(H_{i-1}(K), \Zring)$, see the Universal Coefficients Theorem} $z$ and $[\phi]\neq 0$, then there exist $c,d$ and an $n>1$ such that $\partial c= nd$ and $\phi(c) \neq 0$ mod $n$.
\end{lemma}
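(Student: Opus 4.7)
My plan is to handle the two implications separately. The forward implication is a direct computation: if $[\phi]=0$ then $\phi=\delta\psi$ for some cochain $\psi$, hence $\phi(c)=\psi(\partial c)=\psi(nd)=n\,\psi(d)$, which is divisible by $n$ and contradicts the hypothesis $\phi(c)\not\equiv 0 \pmod n$.

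For the converse, I would first translate the problem into a question about extending a partial homomorphism. Since $\phi$ vanishes on $Z_i=\ker\partial$, it factors uniquely as $\phi=\tilde\phi\circ\partial$ for some $\tilde\phi\colon B_{i-1}\to\Zring$. Because $C_{i-1}/Z_{i-1}\cong B_{i-2}\subset C_{i-2}$ is free abelian, the sequence $0\to Z_{i-1}\to C_{i-1}\to B_{i-2}\to 0$ splits; hence $[\phi]=0$ is equivalent to the existence of a homomorphism $\psi\colon Z_{i-1}\to\Zring$ with $\psi|_{B_{i-1}}=\tilde\phi$ (such a $\psi$ is extended by $0$ on a complement of $Z_{i-1}$ to produce a cochain with $\delta\psi=\phi$). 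Applying $\mathrm{Hom}(-,\Zring)$ to the short exact sequence $0\to B_{i-1}\to Z_{i-1}\to H_{i-1}\to 0$ then shows that the obstruction to such an extension lies in $\mathrm{Ext}(H_{i-1},\Zring)$, and this obstruction is non-zero by the hypothesis $[\phi]\neq 0$.

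It remains to produce the certificate $(c,d,n)$. I argue the contrapositive: if $\phi(c)\equiv 0\pmod n$ for every triple with $\partial c=nd$, $d$ an $(i-1)$-cycle, and $n>1$, then $\tilde\phi$ does extend to $Z_{i-1}$. The plan is to decompose $H_{i-1}=F\oplus T$ with $F$ free and $T$ torsion, choose cycle lifts of free generators (on which $\psi$ may be defined arbitrarily, say as $0$), and, for each torsion generator $[d_j]$ of order $n_j$, use the hypothesis to write $\tilde\phi(n_jd_j)=n_jk_j$ and set $\psi(d_j):=k_j$. The technical heart of the argument is checking consistency: any integer relation $\sum a_jd_j\in B_{i-1}$ among the chosen lifts must map under $\psi$ to the same integer that $\tilde\phi$ assigns to it. This reduces, via the structure theorem for finitely generated abelian groups applied to the subgroups actually generated by such a relation, to precisely the divisibility conditions $\phi(c)\equiv 0\pmod n$ the hypothesis provides. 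I expect this consistency verification to be the main obstacle; a cleaner alternative I might pursue is a Zorn's lemma argument that extends $\tilde\phi$ one cyclic quotient at a time, showing that the sole obstruction at each stage is a value of $\phi(c)$ modulo some $n>1$ — exactly what the hypothesis excludes.
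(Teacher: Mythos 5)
Your proposal is correct and follows essentially the same route as the paper: the forward direction is the identical computation, and the converse likewise reduces $[\phi]\neq 0$ to the non-extendability of the induced homomorphism on $B_{i-1}$ to $Z_{i-1}$, detected by divisibility conditions on the torsion generators of $H_{i-1}$. The consistency check you flag as the main obstacle is also the delicate point in the paper's proof (which dismisses it as ``easily checked''); it is discharged cleanly by choosing a stacked basis for $B_{i-1}\subset Z_{i-1}$ (Smith normal form), i.e.\ a basis $e_1,\dots,e_r$ of $Z_{i-1}$ such that $n_1e_1,\dots,n_se_s$ is a basis of $B_{i-1}$, after which the hypothesis lets you define $\psi(e_j)$ independently on each basis element with nothing left to verify.
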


\begin{proof}
Assume $[\phi]=0$, then, there is an $(i-1)$-cochain $\psi$ such that $\phi = \delta \psi$. We have $\phi(c)=\delta \psi(c)=\psi(\partial c)=\psi(nd)=n \psi(d)$. But this is in contradiction with our assumption that $\phi(c) \neq 0$ mod $n$.

To prove the other direction, assume $\phi$ vanishes on all cycles but $[\phi]\neq0$. We consider the short exact sequence of chain complexes

\begin{equation*}
\begin{split}
0 \rightarrow Z \rightarrow C \xrightarrow{\partial} B \rightarrow 0.
\end{split}
\end{equation*}
In the above, the cycle groups $Z_n(L)$ are made into a chain complex $Z$ with differential 0. Similarly for the boundary groups $B_n$. The maps $Z_n \rightarrow C_n$ are the inclusions. Applying $\text{Hom}(, \Zring)$ to this sequence we obtain a dual short exact sequence
\begin{equation*}
\begin{split}
0 \rightarrow B^* \xrightarrow{\delta} C^* \xrightarrow{r} Z^* \rightarrow 0.
\end{split}
\end{equation*}
The homomorphism $r: C_n^* \rightarrow Z_n^*$ is the dual to inclusion which is the same as restriction of homomorphisms to $Z_n$. The long exact sequence of homology groups associated with this short exact sequence of chain complexes is 

\begin{equation}
\begin{split}
\cdots Z^*_{n-1} \rightarrow B_{n-1}^* \xrightarrow{\delta} H^n(C) \xrightarrow{r} Z^*_n \rightarrow \cdots.
\end{split}
\end{equation}

The boundary map of the long exact sequence $Z^*_{n-1} \rightarrow B^*_{n-1}$ can be seen to be the same as restriction to boundaries. See \cite{Hat02} page 192 for full details of the above sequence. 
The map $B^*_{n-1} \rightarrow H^n(C)$ is induced by $\delta$, and the map $H^n(C) \rightarrow Z^*_n$ is induced by the restriction of cocycles to $Z_n$. Now since $\phi$ vanishes on $Z_n$ we must have $r([\phi])=0.$ Since the sequence is exact and $[\phi]\neq 0$, there is a homomorphism $\psi \in B^*_{n-1}$ such that $[\delta \psi]=[\phi]$. Therefore $\psi$ is not in the kernel of $\delta$, which is the same as the image of $Z^*_{n-1}$. Consequently, $\psi$ is a homomorphism of $B_{n-1}$ that cannot be extended to $Z_{n-1}$, and, $\phi = \delta \psi +\delta \theta$ where $\theta$ is defined on all of $C_{i-1}$.

Let us write $H_{i-1}(L) = \bigoplus_{j=1}^m \Zring/n_j\Zring \oplus \bigoplus_{j=m+1}^{s} \Zring$ and assume that $z_{1}, \ldots,z_m$ are $(i-1)$-cycles representing generators of $\Zring/n_j\Zring$ factors. Moreover, let $c_j$ be such that $\partial c_j = n_j z_j$. For simplicity, we set $n_j=\infty$ for $j>m$. Let $Z'$ be the (free) subspace of $Z_{i-1}(L)$ generated by the $z_i$ and the generators of $\Zring$ factors of $H_{i-1}(L)$. If we could extend $\psi$ to $Z_{i-1}$ it would follow that $[\phi]=0$, contradicting our assumption.

The homomorphism $\psi$ is already defined on all the cycles that are boundaries. On the other hand, if for all $j$, $\psi(n_j z_j)= 0$ mod $n_j$, then, $\psi$ can be defined on $z_j$ consistently with its values on boundaries. In addition, define $\psi$ on generators of $\Zring$ factors of $Z'$ arbitrarily. Then we have defined $\psi$ on all of $Z'$. For any other cycle $z \in Z_{i-1}(L)$, that is not a boundary, there is a unique $z'\in Z'$, of the form $\Sigma m_j z_j$ with $m_j < n_j$, such that $z-z'=b \in B$. Hence, such $b$ is also unique. We define $\psi(z)= \psi(z')+\psi(b)$. This defines $\psi$ an all the cycles. It is easily checked that $\psi$ thus defined is a homomorphism and we obtain the desired contradiction. Since this is not possible there is some $j$ for which $\phi(c_j) =\phi(c_j)-n_j \theta(z_j)= \psi(n_j z_j)\neq 0$ mod $n_j$.

\end{proof}

We are ready for the general case of our main result.
\begin{Theorem}
Let $K$ be a simplicial $d$-complex such that $\vartheta^{m}(K) \neq 0$. Then $\vartheta^{m+2}([3]*K)\neq 0$, $m$ even.
\end{Theorem}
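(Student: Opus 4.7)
The plan is to adapt the $\Zring_2$-coefficient strategy of Theorem~\ref{t:mod2} to integer coefficients, using Lemma~\ref{l:generalchain} in place of the cycle-evaluation certificate. First, recall that $\vartheta^m_r(K)$ is 2-torsion (Remark~2 after the embedding classes) and vanishes on every integer cycle of $\Delta(K)/\tau$ (Lemma~\ref{l:zeroooncycle}). Hence the second part of Lemma~\ref{l:generalchain} provides a chain $c \in C_m(\Delta(K)/\tau)$, an $(m-1)$-cycle $d$, and an integer $n>1$ with $\partial c = n d$ such that every cocycle representative $\phi$ of $\vartheta^m_r(K)$ satisfies $\phi(c) \not\equiv 0 \pmod n$. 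A short extra argument using $2\phi=\delta\theta$ shows that $n$ must be even and $\phi(c)\equiv n/2 \pmod n$, since $n/2$ is the unique element of order~$2$ in $\Zring/n\Zring$.

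Next I would lift $c$ and carry over the construction from the mod~$2$ proof. Choose $y \in C_m(\Delta(K))$ with $\pi_\sharp y = c$, set $\tilde c = (1+\tau_\sharp) y = \bar\pi_\sharp c$ (a $\delta$-chain with $\partial \tilde c = n \tilde d$, where $\tilde d = \bar\pi_\sharp d$), and define, with $u,v,w$ the three vertices of $[3]$, the candidate certificate
$$\zeta = vw(\tilde c) + wu(\tilde c) + uv(\tilde c) \in C_{m+2}(\Delta([3]*K)).$$
Using equation~(\ref{e:boundary}) together with $\tau_\sharp \tilde c = \tilde c$ yields
$$\partial \zeta = (\tau_\sharp - 1)\bigl(v(\tilde c)+w(\tilde c)+u(\tilde c)\bigr) + n\,(vw+wu+uv)(\tilde d).$$
Applying $\pi_\sharp$ kills the first term, so $\partial(\pi_\sharp \zeta) = n\,D$, where $D = \pi_\sharp\bigl((vw+wu+uv)\tilde d\bigr)$ is an $(m+1)$-cycle in $\Delta([3]*K)/\tau$ (the cycle condition follows from another application of equation~(\ref{e:boundary}) using $\tau_\sharp \tilde d = \tilde d$ and $\partial \tilde d = 0$).

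It remains to evaluate a representative of $\vartheta^{m+2}_r([3]*K)$ on $\pi_\sharp \zeta$ modulo~$n$. Fix an integer resolution $\psi_0,\psi_1,\ldots$ of $1 \in C^0(\Delta([3]*K)/\tau)$, so that $\bar\pi^\sharp \psi_{m+2}$ represents $\vartheta^{m+2}_r([3]*K)$; by equivariant naturality, the restrictions $\iota^\sharp \psi_i$ to $\Delta(K)$ form a resolution of $1 \in C^0(\Delta(K)/\tau)$, so $\bar\pi^\sharp \iota^\sharp \psi_m$ represents $\vartheta^m_r(K)$. Mirroring the telescoping calculation of Theorem~\ref{t:mod2}, and using the resolution identity $(1-\tau^\sharp)\psi_{m+1}=\delta\psi_m$ (valid because $m+1$ is odd) together with $\partial v(\tilde c) = \tilde c - n\,v(\tilde d)$, the sum collapses to
$$\bar\pi^\sharp \psi_{m+2}(\pi_\sharp \zeta) = \psi_{m+1}(\partial \zeta) \equiv -3\,\psi_m(\tilde c) \equiv -3\,\bar\pi^\sharp \iota^\sharp\psi_m(c) \pmod n.$$
Since $\bar\pi^\sharp\iota^\sharp\psi_m(c)\equiv n/2\pmod n$, this equals $-3n/2 \equiv n/2 \pmod n$, which is non-zero. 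Invoking the forward direction of Lemma~\ref{l:generalchain} in $\Delta([3]*K)/\tau$ certifies $\vartheta^{m+2}_r([3]*K)\neq 0$, hence $\vartheta^{m+2}([3]*K)\neq 0$.

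The main obstacle is the arithmetic around the factor of $-3$ arising from the three vertices of $[3]$: a priori, if $n$ were divisible by~$3$ this factor could annihilate the certificate. This possibility is ruled out precisely by the 2-torsion property of the van Kampen obstruction, which forces $\phi(c)\equiv n/2$ and $-3\phi(c)\equiv n/2\pmod n$ for any even~$n$. Beyond this, the main bookkeeping is in the sign changes compared to the mod~$2$ proof of Theorem~\ref{t:mod2}: every $(1+\tau_\sharp)$ that appeared there is now a $(\tau_\sharp-1)$, which fortunately is also annihilated by $\pi_\sharp$, so the structural argument goes through unchanged.
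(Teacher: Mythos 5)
Your proposal is correct and follows the same strategy as the paper: certify non-vanishing via Lemma \ref{l:generalchain} applied to the chain $\pi_\sharp\zeta$, with $\zeta$ built from the $vw$-operations and evaluated by telescoping through a resolution restricted via Lemma \ref{l:resolution}. The one place you diverge is the choice of certificate chain: you take the cyclically symmetric $\zeta = vw(\tilde c)+wu(\tilde c)+uv(\tilde c)$, whereas the paper takes $vw(c)+wu(c)-vu(c)$. With the paper's signs the terms $v(c)$ and $u(c)$ cancel outright in $\partial\zeta$, leaving only $(\tau_\sharp-1)w(c)$ plus the $n$-divisible part, so the evaluation collapses to $-\psi_m(c)\not\equiv 0 \pmod n$ with no further work. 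Your symmetric choice leaves $(\tau_\sharp-1)(u+v+w)(\tilde c)$ and hence the factor $-3$, which you then neutralize by the (correct) observation that $2$-torsion of $\vartheta^m_r(K)$ forces $n$ even and $\phi(c)\equiv n/2 \pmod n$, so that $-3\phi(c)\equiv n/2\neq 0$. Both routes are valid; the paper's sign choice buys you freedom from any arithmetic on $n$ (and would survive if the class were not $2$-torsion), while your version is more symmetric but leans on the order-two property, which you correctly justify for all even $m$ via the map to $\Rspace P^\infty$. Your auxiliary claims (representative-independence of $\phi(c)\bmod n$, the hypothesis of Lemma \ref{l:generalchain} being supplied by Lemma \ref{l:zeroooncycle}, and $D$ being a cycle) all check out.
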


\begin{proof}
Let $\psi_0, \psi_1, \psi_2, \ldots$ be a resolution for the cocycle $1\in C^0(\Delta([3]*K)/\tau)$ so that $[\bar{\pi}^\sharp \psi_{j}]=A^{j}_r([3]*K)$. Let $\iota: \Delta(K)\rightarrow \Delta([3]*K)$ be the inclusion. Then, from Lemma~\ref{l:resolution} below follows that the sequence $\iota^\sharp \psi_k$ defines a resolution for $1\in C^0(\Delta(K)/\tau)$. From the assumption that $\vartheta^{m}(K) \neq 0$ and
Lemma \ref{l:zeroooncycle}, Lemma \ref{l:generalchain}  say that there is an integer $n$ and a chain $\tilde{c} \in C_{m}(\Delta(K)/\tau)$ such that $\bar{\pi}^\sharp\iota^\sharp \psi_{m}(\tilde{c}) \neq 0$ mod $n$, and, $\partial \tilde{c} = n \tilde{z}$ for a cycle $\tilde{z}$. The chain $\tilde{c}$ corresponds to a $\delta$-chain $c=\bar{\pi}_\sharp(\tilde{c}) \in C_{m}(\Delta(K)).$ Let $c = (1 + \tau_\sharp) y$. We have, $\partial c=\partial \bar{\pi}_\sharp (\tilde{c})=\bar{\pi}_\sharp \partial \tilde{c} =n \bar{\pi}_\sharp \tilde{z}$. Let $z=\bar{\pi}_\sharp \tilde{z}$. 

We construct the chain $\zeta \in C_{m+2}(\Delta([3]*K))$ by the formula $$\zeta = vw(c)+wu(c)-vu(c).$$ 
We first compute,
\begin{equation*}
\begin{split}
\partial(\zeta)&=\tau_\sharp(w (\tau_\sharp(c))) - v(c)+vw(\partial(c))\\
&+ \tau_\sharp(u (\tau_\sharp(c))) - w(c)+wu(\partial(c))\\
&- \tau_\sharp(u (\tau_\sharp(c))) + v(c)-vu(\partial(c)).
\end{split}
\end{equation*} Now since $\tau_\sharp(c)=c$ and $\partial c=n z$ the above simplifies to
\begin{equation*}
\begin{split}
\partial(\zeta)& = (\tau_\sharp-1) w(c)+ \big(n(vw(z)+wu(z)-vu(z))\big).
\end{split}
\end{equation*}
It follows that $\partial \pi_\sharp(\zeta)=\pi_\sharp \partial(\zeta) = n \pi_\sharp(vw(z)+wu(z)-vu(z))$. We set $d=\pi_\sharp(vw(z)+wu(z)-vu(z)).$ Thus, $\partial \pi_\sharp(\zeta)=nd$.

We have 
\begin{equation*}
\begin{split}
\bar{\pi}^\sharp \psi_{m+2}(\pi_\sharp \zeta)&= (\pi^\sharp)^{-1}(1+\tau^\sharp)(\psi_{m+2})(\pi_\sharp \zeta)\\
&=(\pi^\sharp)^{-1} \delta \psi_{m+1}(\pi_\sharp \zeta)\\
&=\delta \psi_{m+1}(\zeta)=\psi_{m+1}(\partial \zeta)\\
&=-\psi_{m+1}((1-\tau_\sharp)w(c))+n\psi_{m+1}(vw(z)+wu(z)-vu(z))\\
&=-\psi_{m}(\partial(w(c)) \; \text{mod} \; n\\
&=-\psi_{m}(c-w(\partial c)) = -\psi_{m}(c)+n \psi_{m}(w(z)) = -\psi_{m}(c)  \; \text{mod} \; n.\\
\end{split}
\end{equation*}

Now since $c$ is a chain of $\Delta(K)$, $\psi_{m}(c)=\iota^\sharp \psi_{m}(c)$. We can write $\iota^\sharp\psi_{m}(c) = \pi^\sharp (\pi^\sharp)^{-1}\iota^\sharp \psi_{m}(1+\tau_\sharp y) = \pi^\sharp {\bar{\pi}}^\sharp \iota^\sharp \psi_{m} (y)=\bar{\pi}^\sharp\iota^\sharp \psi_{m}(\pi_\sharp y) = \bar{\pi}^\sharp\iota^\sharp \psi_{m}(\tilde{c}).$ However, we have by the choice of $\tilde{c}$, $\bar {\pi}^\sharp \iota^\sharp \psi_{m}(\tilde{c}) \neq 0$ mod $n$. It follows that $\bar{\pi}^\sharp \psi_{m+2}(\pi_\sharp \zeta) \neq 0$ mod $n$. Since $\partial \pi_\sharp \zeta = n d$, lemma \ref{l:generalchain} shows that $\vartheta^{m+2}_r([3]*K) = A^{m+2}_r([3]*K)=[\bar{\pi}^\sharp \psi_{m+2}] \neq 0,$ from which the theorem follows.
\end{proof}

\begin{lemma}\label{l:resolution}
	Let $\psi_0, \psi_1, \psi_2, \ldots $ be a resolution for the cocycle $1 \in C^0(\Delta([3]*K)/\tau)$, and, let $\iota: \Delta(K) \subset \Delta([3]*K)$ be the inclusion. Then, $\iota^\sharp \psi_0, \iota^\sharp \psi_1, \ldots$ is a resolution for $1 \in C(\Delta(K)/\tau)$.
\end{lemma}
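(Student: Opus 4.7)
The plan is to unpack the definition of a resolution and verify its two defining conditions for the sequence $\iota^\sharp \psi_0, \iota^\sharp \psi_1, \ldots$ in $C^*(\Delta(K))$, using only the fact that $\iota:\Delta(K)\hookrightarrow \Delta([3]*K)$ is a cellular $\Zring_2$-equivariant inclusion. Recall that a resolution of $1\in C^0(\Delta(K)/\tau)$ is a sequence $\phi_0,\phi_1,\ldots$ with $\bar{\pi}^\sharp\phi_0=1$ and $(1+(-1)^i\tau^\sharp)\phi_i = \delta\phi_{i-1}$ for $i\geq 1$. So I need to check these two properties for $\phi_i = \iota^\sharp\psi_i$.

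The key structural observation, which I would verify first, is that the involution $\tau$ on $\Delta([3]*K)$ (exchange of factors) restricts to the involution on $\Delta(K)$ (again, exchange of factors); in particular $\iota$ is $\Zring_2$-equivariant and the orbit $\{\sigma,\tau_\sharp\sigma\}$ of a cell $\sigma\in\Delta(K)$ is contained in $\Delta(K)$. From this and the standard fact that $\iota$ is a chain map, $\iota^\sharp$ commutes with $\tau^\sharp$ and with $\delta$, i.e.,
\[
\iota^\sharp\tau^\sharp = \tau^\sharp\iota^\sharp, \qquad \iota^\sharp\delta = \delta\iota^\sharp.
\]

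For the first defining condition, I would compute directly: for a 0-cell $\sigma\in\Delta(K)$,
\[
\bar{\pi}^\sharp(\iota^\sharp\psi_0)([\sigma]) = (1+\tau^\sharp)(\iota^\sharp\psi_0)(\sigma) = \psi_0(\sigma)+\psi_0(\tau_\sharp\sigma) = \bar{\pi}^\sharp\psi_0([\sigma]) = 1,
\]
where the last equality uses $\bar{\pi}^\sharp\psi_0 = 1 \in C^0(\Delta([3]*K)/\tau)$. Hence $\bar{\pi}^\sharp(\iota^\sharp\psi_0) = 1\in C^0(\Delta(K)/\tau)$.

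For the recurrence, I would apply $\iota^\sharp$ to the identity $(1+(-1)^i\tau^\sharp)\psi_i = \delta\psi_{i-1}$ in $C^i(\Delta([3]*K))$ and commute it past $\tau^\sharp$ and $\delta$ to obtain
\[
(1+(-1)^i\tau^\sharp)\iota^\sharp\psi_i = \iota^\sharp(1+(-1)^i\tau^\sharp)\psi_i = \iota^\sharp\delta\psi_{i-1} = \delta\iota^\sharp\psi_{i-1},
\]
which is the required recurrence for a resolution. The two checks together finish the proof. There is no real obstacle here; the only point to be careful about is that the $\tau$-orbits in $\Delta([3]*K)$ of cells lying in $\Delta(K)$ are exactly the $\tau$-orbits in $\Delta(K)$, so that $\iota^\sharp$ and $\bar{\pi}^\sharp$ interact with one another in the expected way, and this is immediate from the fact that $\tau$ acts by swapping factors in both ambient complexes.
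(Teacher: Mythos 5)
Your proof is correct and follows essentially the same route as the paper's: both verify the normalization $\bar{\pi}^\sharp\iota^\sharp\psi_0=1$ directly and obtain the recurrence by applying $\iota^\sharp$ to $(1+(-1)^i\tau^\sharp)\psi_i=\delta\psi_{i-1}$ and commuting it past $\delta$ and $\tau^\sharp$ via the equivariance of the inclusion. Your version just spells out the normalization step and the orbit-preservation point slightly more explicitly.
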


\begin{proof}
	Observe that $\bar{\pi}^\sharp\iota^\sharp \psi_0$ assigns 1 to each vertex hence $\bar{\pi}^\sharp\iota^\sharp \psi_0 = 1 \in C^0(\Delta(K)/\tau)$. Moreover, from the fact that $\iota_\sharp$ is an equivariant chain map it follows that $$\delta \iota^\sharp \psi_k= \iota^\sharp \delta \psi_k = \iota^\sharp (1+(-1)^{k+1} \tau^\sharp) \psi_{k+1}=(1+(-1)^{k+1}\tau^\sharp)\iota^\sharp \psi_{k+1}.$$ Therefore, $\iota^\sharp \psi_k$ define a resolution for $1 \in C^0(\Delta(K)/\tau)$.
\end{proof}

\subsection{Embedding $[3]*K$ into $\Rspace^{2d+2}$ when $\vartheta(K)=0$}\label{s:embedding}
In this section we prove that, if $K$ is a $d$-dimensional simplicial complex with $\vartheta^{2d}(K)=0$, then, the $(d+1)$-complex $[3]*K$ has vanishing van kampen obstruction, i.e., $\vartheta^{2d+2}([3]*K)=0$. Note that, for $d\neq 2$, $\vartheta^{2d}(K)=0$ implies $K$ embeddable into $\Rspace^{2d}$, from which follows easily that $[3]*K$ embeds into $\Rspace^{2d+2}$. Therefore, the argument given in this section is needed only for the case $d=2$. This argument finishes the proof of Theorem \ref{t:main}. The method of proof is by presenting a PL map with zero embedding cocycle. This seems to be easier than proving $\vartheta^{2d+2}([3]*K)=0$ algebraically from $\vartheta(K)=0$.

Let $d\geq 2$. We start by taking a (PL) map $f:K \rightarrow \Rspace^{2d}$ with $\vartheta^{2d}_f(K)=0$. This map exists by the standard theory of embeddings. Namely, one defines an arbitrary PL map in general position and exploits finger moves to arrive at a map with vanishing intersection cocycle. Consequently, the images of every two disjoint $d$-simplices under $f$ have intersection number zero in $\Rspace^{2d}$. 

Now we think of $\Rspace^{2d}$ as lying in some $\Rspace^{2d+2}$. There exists a family of $(2d+1)$-spaces, or hyperplanes if $\Rspace^{2d+2}$ whose mutual intersection is exactly this $\Rspace^{2d}$, and, together they cover $\Rspace^{2d+2}$. These $(2d+1)$-spaces can be parametrized by points of an $S^1$. Let $p \in \Rspace^{2d} \subset \Rspace^{2d+2}$, and let $B(p)$ be a closed $(2d+2)$-ball centered at $p$ with sufficiently small radius. We schematically describe the intersection of these $(2d+1)$-hyperplanes with the ball $B(p)$ as diameters of the $S^1$, as in Figure \ref{f:embedding}.

Take three points of $S^1$, say $\alpha_1, \alpha_2, \alpha_3$. For $\alpha_i$ we define a vertex $v_{\alpha_i}$. Put $v_{\alpha_i}$ in the $\Rspace^{2d+1}$ space with parameter $\alpha_i$, and not in our $\Rspace^{2d}$. Next, cone the image of $f$ using these three vertices. The cones lie in their respective $(2d+1)$-spaces. This gives us a map of $[3]*K$ into $\Rspace^{2d+2}$, which we denote also by $f$. Note that the $v_{\alpha_i}$ will be the image of the vertex $i$ in our map of $[3]*K$. 

We next modify $f$ in two steps. The first step is a local modification in a ball of small radius around each intersection point of two disjoint $d$-simplices of $K$. Let $p$ be an intersection point of (images of) two disjoint $d$-simplices $\sigma_1,\sigma_2$ of $K$ in $\Rspace^{2d} \subset \Rspace^{2d+2}$. Take the ball $B(p)$ with a radius such that the map $f$ is simplex-wise linear inside (preimage of) $B(p)$. The intersection of $B(p)$ with image of $f$ then consists of six $(d+1)$-dimensional half-balls. They are divided into two sets of three each, say $\Sigma_1, \Sigma_2$. For $i=1,2$, the elements of $\Sigma_i$ are part of cones over $f(\sigma_i)$ from the three $v_{\alpha_j}$. Elements of each $\Sigma_i$ coincide on their boundary in $f(\sigma_i)$. Therefore, an element of $\Sigma_1$ intersects an element of $\Sigma_2$ in a line segment or in a point. If the two elements belong to the same cone, they intersect at a line segment ending at $p$. If they belong to different cones, they intersect at the point $p$ only, which lies in their boundary.

\begin{figure}
	\centering
	
	\includegraphics[scale=0.4]{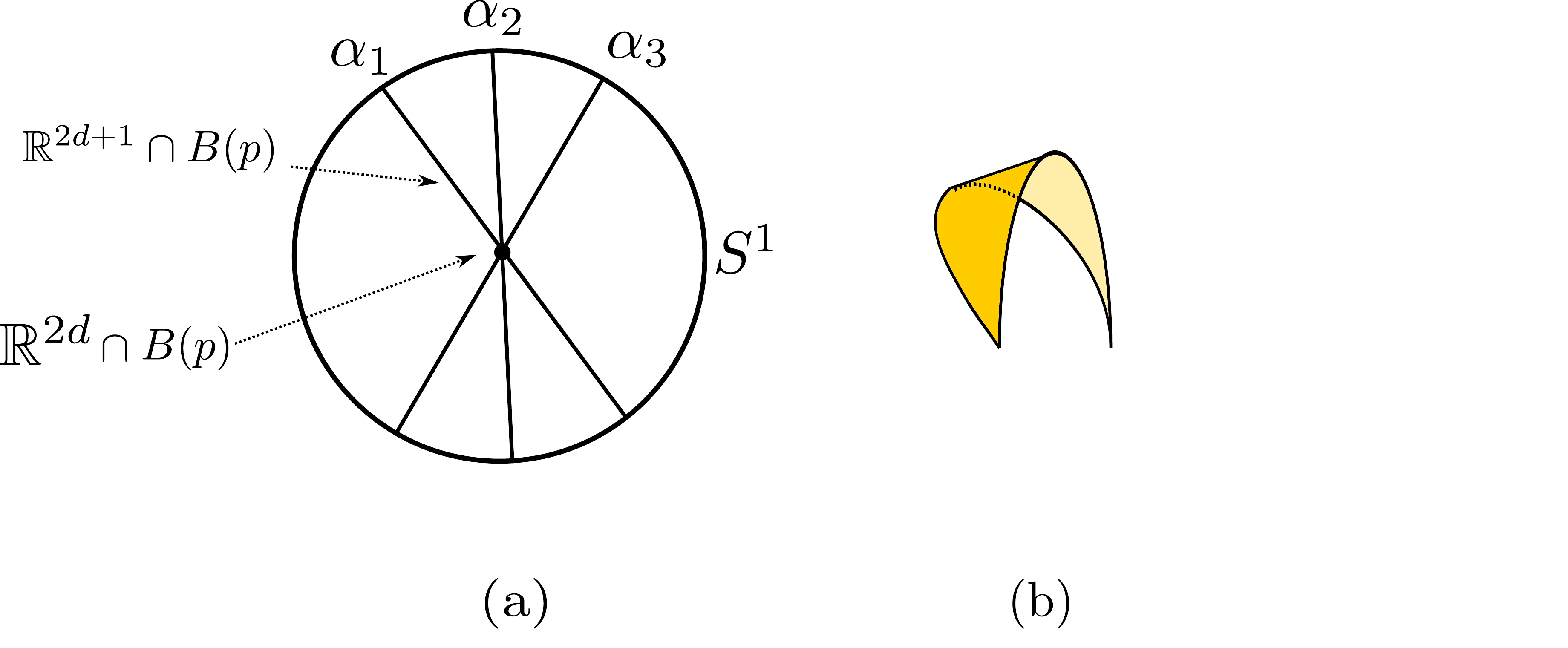}
	\caption{(a) Schematic view of the intersection of spaces with $B(p)$, (b) An angular strip when $d=1$}\label{f:embedding}
\end{figure}

We modify our map as follows. Let $A_i \in \Sigma_1$ be the half-ball inside the space $\alpha_i$. For any $i$ let $a_i$ be the $d$-disk $cl(\partial A_i-\Rspace^{2d})$ lying on the sphere $\partial B(p)$. Remove the interior of $A_2$. We fill $\partial A_2$ again by attaching an ``angular strip" with boundary $a_2 \cup a_3$ and then using $A_3$ to fill the hole. This angular strip lies in the $(2d+1)$-spaces parametrized by the arc $\alpha_2\alpha_3$ of $S^1$, see Figure \ref{f:embedding}. This operation changes the map $f$ on a small neighborhood of the preimage of $p$ in $v_{2}*\sigma_1$.

We do similarly for $A_1$ by removing its interior and connecting its boundary by an angular strip to $\partial A_2$. Then, we fill the hole using the angular strip for $A_2$ and $A_3$. It follows that the images of all three cones over $\sigma_1$ coincide in the interior of the ball $B(p)$. Moreover, we have introduced no new intersection between disjoint $(d+1)$-simplices outside the small balls. 

Perform the same operation on the half-balls of $\Sigma_2$. Let $\hat{f}:[3]*K \rightarrow \Rspace^{2d+2}$ be the resulting map after modifying $f$ as above near each intersection point of two disjoint $d$-simplices. Any intersection point $p$ of two $d$-simplices is now an endpoint of an intersection arc of $3\times 2$ ordered pairs of disjoint $(d+1)$-simplices.

The second modification on $\hat{f}$ goes as follows. Take again a very small ball $B(p)$ centered at the intersection of two $d$-simplices of $K$. Now inside the ball $B(p)$ we see two $(d+1)$-half-disks which intersect at an arc. It is easy to see that we can perturb the two half-disks inside $B(p)$ such that this intersection arc becomes a transversal intersection point of the two half-disks, consequently of the corresponding $(d+1)$-simplices. Do this for all $p$. Let the the resulting map be $\tilde{f}$.

From the above construction, one can see that, for $i\neq j$, two simplices $v_{\alpha_i}*\sigma_1, v_{\alpha_j}*\sigma_2$  are disjoint and intersect transversally at a point in $B(p)$ under $\tilde{f}$ if and only if $\sigma_1, \sigma_2$ are disjoint and intersect at $p$ under $f$. Once can easily check that the signs of the intersections are defined naturally by those of intersections of $\sigma_1, \sigma_2$. It follows that the map $\tilde{f}$ satisfies $\vartheta^{2d+2}_{\tilde{f}}=0$. Moreover, since $d\geq 2$, this latter condition is enough for existence of an embedding into $\Rspace^{2d+2}$. This finished the proof of both directions of Theorem~\ref{t:main}.

\bibliographystyle{spmpsci}      

\end{document}